\title[Entropy, regularity and rigidity]{On entropy, regularity and rigidity for convex representations of hyperbolic manifolds}
\author{A. Sambarino}
\thanks{The author was partially supported by
the European Research Council under the {\em European Community}'s seventh Framework Program (FP7/2007-2013)/ERC {\em grant agreement} n° FP7-246918.}
\newcommand{\R}{\mathbb{R}}
\renewcommand{\H}{\mathbb{H}}
\renewcommand{\P}{\mathbb{P}}
\renewcommand{\/}{\backslash}
\renewcommand{\k}{\kappa}
\newcommand{\eps}{\varepsilon}
\newcommand{\upla}[3]{#1_{#2},\ldots,#1_{#3}}
\newcommand{\G}{\Gamma}
\renewcommand{\l}{\ell}
\newcommand{\<}{\left<}
\renewcommand{\>}{\right>}
\newcommand{\E}{\Sigma}
\newcommand{\scr}{\mathscr}
\newcommand{\g}{\gamma}
\renewcommand{\a}{\t}
\newcommand{\z}{\zeta}
\newcommand{\w}{\widetilde}
\newcommand{\bord}{\partial_\infty}
\newcommand{\vo}[1]{\overline{#1}}
\newcommand{\cone}{\scr L}
\newcommand{\bus}{\sigma}
\newcommand{\M}{\sf M}
\renewcommand{\d}{\delta}
\newcommand{\grupo}{\Delta}
\newcommand{\om}{\omega}
\newcommand{\posgen}{\scr F^{(2)}}
\renewcommand{\t}{\theta}
\renewcommand{\sf}{\mathsf}
\newcommand{\UG}{{\sf{U}}\G}
\newcommand{\hX}{h_\G}
\newcommand{\h}{{\sf{H}}}
\newcommand{\A}{{\sf{A}}}
\newcommand{\LL}{{\sf{L}}}
\newcommand{\II}{{\bf I}}
\renewcommand{\L}{\Lambda}
\newcommand{\cal}{\mathcal}
\renewcommand{\frak}{\mathfrak}
\DeclareMathOperator{\ii}{i}
\DeclareMathOperator{\Lim}{L}
\DeclareMathOperator{\SL}{SL}
\DeclareMathOperator{\PSL}{PSL}
\DeclareMathOperator{\Leb}{Leb}
\DeclareMathOperator{\clase}{C}
\DeclareMathOperator{\Ad}{Ad}
\DeclareMathOperator{\isom}{Isom}
\DeclareMathOperator{\SO}{SO}
\DeclareMathOperator{\Sp}{Sp}
\DeclareMathOperator{\Ge}{G_2}
\DeclareMathOperator{\PGL}{PGL}
\DeclareMathOperator{\PSO}{PSO}
\DeclareMathOperator{\tope}{top}
\DeclareMathOperator{\CAT}{CAT}
\newtheorem*{teo1}{Theorem A}
\newtheorem*{teoB}{Theorem B}
\newtheorem*{teoC}{Theorem C}
\newtheorem*{teoD}{Theorem D}
\newtheorem{teo}{Theorem}[section]
\newtheorem{cor}[teo]{Corollary}
\newtheorem*{cor2}{Corollary}
\newtheorem{lema}[teo]{Lemma}
\newtheorem{prop}[teo]{Proposition}
\theoremstyle{definition}
\newtheorem{defi}[teo]{Definition}
\newtheorem{obs}[teo]{Remark}
\theoremstyle{remark}
\begin{document}

\begin{abstract} Given a convex representation $\rho:\G\to\PGL(d,\R)$ of a convex cocompact group $\G$ of $\isom_+\H^k,$ we find upper bounds for the quantity $\alpha h_\rho,$ where $h_\rho$ is the entropy of $\rho$ and $\alpha$ is the H\"older exponent of the equivariant map $\bord\G\to\P(\R^d).$ We also give rigidity statements when the upper bound is attained. We then prove that if $\rho:\pi_1\E\to\PSL(d,\R)$ is in the Hitchin component then $\alpha h_\rho\leq 2/(d-1)$ (where $\alpha$ is the H\"older exponent of Labourie's equivariant flag curve) with equality if and only if $\rho$ is Fuchsian.\end{abstract}

\maketitle

\tableofcontents


\section{Introduction}

Consider a $\CAT(-1)$ space $X.$ Its visual boundary $\bord X$ is equipped with a natural metric called a visual metric. This metric depends on the choice of a point in $X$ and different points induce bi-Lipschitz equivalent metrics.

Consider now a convex cocompact action of a hyperbolic group $\G$ on $X.$ An important invariant for this action is the Hausdorff dimension $\hX,$ for a (any) visual metric, of the limit set $\Lim_\G$ of $\G$ on the visual boundary $\bord X$ of $X.$

Several rigidity statements have been found concerning lower bounds on this Hausdorff dimension. For example, Bourdon \cite{bourdon2} proved that if $\G=\pi_1\M,$ where $\M$ is a closed $k$-dimensional manifold modeled on $\H^k,$ then $\hX\geq k-1$ with equality only if there is a totally geodesic copy of $\H^k$ on $X$ preserved by $\G.$ We refer the reader to Courtois \cite{courtoislibro} for a more detailed exposition on this problem.

Given two convex cocompact actions $\rho_i:\G\to\isom X_i$ $i=1,2$ on $\CAT(-1)$ spaces $X_i,$ there is an obvious relation between the Hausdorff dimensions of their limit sets. Let $\xi:\Lim_{\rho_1\G}\to \Lim_{\rho_2\G}$ be the H\"older-continuous equivariant map. From the definition of Hausdorff dimension one obtains \begin{equation}\label{eq:alpha}\alpha h_{\rho_2}\leq h_{\rho_1},\end{equation} when $\xi$ is \emph{$\alpha$-H\"older}, i.e. when $d(\xi(x),\xi(y))\leq K d(x,y)^\alpha$ for some $K>0,$ and all $x,y.$ Denote by $$\alpha_{(\rho_1,\rho_2)}=\sup\{\alpha\in\R_+^*: \xi \textrm{ is $\alpha$-H\"older}\}.$$ Remark that $\xi$ is not necessarily $\alpha_{(\rho_1,\rho_2)}$-H\"older. Incidentally, we prove the following proposition. For a non-torsion $\g\in\G,$ denote by $$|\g|=\inf_{p\in X}d_X(p,\g p),$$ the length of the closed geodesic of $\G\/X$ determined by the conjugacy class $[\g]$ of $\g.$ 

\begin{prop}\label{prop:1.1} Consider two convex cocompact actions of $\G$ on $\CAT(-1)$-spaces, $\rho_i:\G\to\isom(X_i),$ $i\in\{1,2\},$ such that $\alpha_{(\rho_1,\rho_2)} h_{\rho_2}=h_{\rho_1}.$ Then for every non-torsion $\g\in\G,$ one has $$|\rho_2\g|=\alpha_{(\rho_1,\rho_2)}|\rho_1\g|.$$
\end{prop}

Deciding if an equation such as $|\rho_2\g|=c|\rho_1\g|$ for some $c>0$ and all non-torsion $\g\in\G$ determines the action $\rho_1,$ is a difficult problem known as \emph{the marked length spectrum problem} (when $c=1)$. Besides certain situations such as negatively curved closed surfaces (treated by Otal \cite{otal}) or if $\rho_1$ is cocompact in $\H^n$ (treated by Bourdon \cite{bourdon2} and Hamenst\"adt \cite{ursula}) little is known. 

The following is a corollary of Theorem B below.

\begin{cor} Let $\rho_1:\G\to\isom_+\H^k$ be a Zariski dense convex cocompact action, and consider a convex cocompact action $\rho_2:\G\to\isom_+\H^n$ such that $\alpha_{(\rho_1,\rho_2)} h_{\rho_2}=h_{\rho_1},$ then $\xi$ is the induced map on the boundary, of an equivariant isometric embedding $\H^k\to\H^n.$
\end{cor}

This is to say, $\rho_2\G$ preserves a totally geodesic copy of $\H^k$ in $\H^n$ and moreover, the action of $\rho_2\G$ on this geodesic copy, is conjugated by an isometry to $\rho_1.$ This provides a rigidity statement for Schottky groups, for example.


The main purpose of this work is to extend inequality (\ref{eq:alpha}) for convex representations, and give rigidity results when the equality holds. In order to do so, we will exploit the well known fact that $\hX$ is also a dynamical invariant.

Consider the geodesic flow of $\G\/X,$ $\phi=(\phi_t:\G\/{\sf U}X\to\G\/{\sf U} X)_{t\in\R}.$ The fact that $\G$ is convex cocompact, is equivalent to the fact that the non-wandering set of $\phi,$ denoted from now on $\UG,$ is compact. Moreover, $\phi|\UG$ has very nice dynamical properties coming from the negative curvature of $X,$ namely it is a metric Anosov flow (see Definition \ref{defi:anosovtop}).

The topological entropy of $\phi$ coincides with the Hausdorff dimension $\hX$ (Sullivan \cite{sullivan}, see also Bourdon \cite{bourdon}), and can be computed by counting how many periodic orbits $\phi$ has:  \begin{equation}\label{equation:entropia} \hX=\lim_{t\to\infty} \frac{1}t\log\#\{[\g]\in[\G]\textrm{ non-torsion}:|\g|\leq t\}.\end{equation}

\begin{defi} We will say that a representation $\rho:\G\to\PGL(d,\R)$ is \emph{convex} if there exist a $\rho$-equivariant H\"older-continuous map $$(\xi,\xi^*):\Lim_\G\to\P(\R^d)\times\P((\R^d)^*)$$ such that if $x,y\in\bord\G$ are distinct, then $\xi(x)\oplus\ker\xi^*(y)=\R^d.$
\end{defi}

Different notions of entropy can be defined for a convex representation by analogy with equation (\ref{equation:entropia}). For $g\in\PGL(d,\R)$ denote by $\lambda_1(g)$ the logarithm of the spectral radius of $g.$ The \emph{spectral entropy} of a convex representation $\rho:\G\to\PGL(d,\R)$ is defined by $$h_\rho=\lim_{t\to\infty} \frac1t \log\#\left\{[\g]\in[\G]\textrm{ non-torsion}:\lambda_1 (\rho\g)\leq t\right\},$$ and the \emph{Hilbert entropy} of $\rho$ is defined by $$\h_\rho=\lim_{t\to\infty} \frac1t \log\#\left\{[\g]\in[\G]\textrm{ non-torsion}:{\displaystyle\frac{\lambda_1 (\rho\g)-\lambda_d(\rho\g)}2}\leq t\right\},$$ where $\lambda_d(\rho\g)$ is the $\log$ of the modulus of the smallest eigenvalue of $\rho\g.$ One has the following proposition.

\begin{prop}[\cite{quantitative}, see also \cite{presion}]\label{prop:finito} The spectral entropy of an irreducible convex representation of a (finitely generated non-elementary) hyperbolic group, is finite and positive.
\end{prop}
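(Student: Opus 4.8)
The plan is to establish finiteness and positivity separately, exploiting the fact that for a convex representation $\rho$ one controls the asymptotic behavior of $\lambda_1(\rho\g)$ in terms of the word length (or equivalently the displacement length $|\g|$ in $X$) via the equivariant maps $\xi,\xi^*$. First I would record the standard estimate that, because $\G$ is a non-elementary hyperbolic group acting properly cocompactly on its Cayley graph, the counting function $\#\{[\g]\in[\G]:|\g|\le t\}$ grows exponentially with rate exactly $\hX>0$, so the orbit-counting series has a genuine positive critical exponent to compare against.

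For \emph{finiteness}: the key input is that a convex representation is in particular a quasi-isometric embedding of $\G$ into $\PGL(d,\R)$ — this follows from the transversality condition $\xi(x)\oplus\ker\xi^*(y)=\R^d$ for distinct $x,y\in\bord\G$ together with the contraction properties of the flow on $\UG$ (a topological Anosov flow), which forces $\lambda_1(\rho\g)$ to grow at least linearly in $|\g|$, and at most linearly by submultiplicativity of the operator norm. Concretely one shows there are constants $C>0$, $c>0$ with $\lambda_1(\rho\g)\ge c|\g|-C$ for all non-torsion $\g$; hence $\{[\g]:\lambda_1(\rho\g)\le t\}\subseteq\{[\g]:|\g|\le (t+C)/c\}$, and therefore
\begin{equation*}
h_\rho=\limsup_{t\to\infty}\frac1t\log\#\{[\g]:\lambda_1(\rho\g)\le t\}\le \frac{\hX}{c}<\infty.
\end{equation*}

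For \emph{positivity}: one argues the reverse inequality $\lambda_1(\rho\g)\le C'|\g|+C''$, which is immediate from submultiplicativity (choosing a generating set, $\|\rho\g\|\le (\max_s\|\rho s\|)^{|\g|_{\mathrm{word}}}$ and $|\g|_{\mathrm{word}}$ is comparable to $|\g|$ up to the Švarc–Milnor quasi-isometry). This yields $\{[\g]:|\g|\le s\}\subseteq\{[\g]:\lambda_1(\rho\g)\le C's+C''\}$, so $h_\rho\ge \hX/C'>0$. Irreducibility is what guarantees the argument is not vacuous: it prevents $\rho$ from having a common invariant proper subspace on which the spectral radius could degenerate, and (together with the existence of the transverse pair $\xi,\xi^*$) it is used to promote the crude norm estimate to a genuine comparison $\lambda_1(\rho\g)\asymp|\g|$ — in particular to rule out that $\lambda_1(\rho\g)$ stays bounded along an exponentially large family of conjugacy classes. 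The main obstacle is precisely this lower bound $\lambda_1(\rho\g)\gtrsim|\g|$: establishing it rigorously requires the Anosov/proximality machinery (gap between first and second singular values growing linearly), which is exactly the content of the cited works \cite{quantitative}, \cite{presion}; I would invoke that estimate as the technical heart and then the counting inequalities above are routine.
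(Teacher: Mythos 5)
The paper does not actually prove this Proposition: it is imported verbatim from \cite{quantitative} (see also \cite{presion}), so there is no internal argument to compare against; your two-sided comparison $\lambda_1(\rho\g)\asymp|\g|$ combined with conjugacy-class counting is indeed the route those references take. Your positivity half is complete and elementary: $\lambda_1(\rho\g)\le\log\|\rho\g\|\le C'|\g|+C''$ by submultiplicativity and the comparison of translation length with word length, whence $h_\rho\ge \hX/C'>0$; note this part does not even use irreducibility or the boundary maps.

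The finiteness half, however, has a genuine gap as written. The entire content of the Proposition is the lower bound $\lambda_1(\rho\g)\ge c|\g|-C$ (equivalently, that a convex representation in the sense of this paper behaves like an Anosov representation at the level of eigenvalues), and you do not prove it: you assert that it ``follows from the transversality condition together with the contraction properties of the flow,'' and then defer the estimate to \cite{quantitative} and \cite{presion} --- which are precisely the sources the Proposition is being quoted from, so the argument is circular rather than independent. Two concrete points would need attention even in an outline. First, transversality of the H\"older maps $(\xi,\xi^*)$ does not by itself give a quasi-isometric embedding or a singular-value gap; one needs the dynamical input that $\rho(\g)$ is proximal with attracting line $\xi(\g_+)$ and repelling hyperplane $\xi^*(\g_-)$ (Lemma \ref{lema:loxodromic}, which itself uses irreducibility), and then a uniform contraction estimate along the recurrent part of the geodesic flow. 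Second, the quantity controlled by norm estimates is $\log\|\rho\g\|$, not the spectral radius; to convert a norm gap into the eigenvalue bound one must pass to powers, $\lambda_1(\rho\g)=\lim_n\frac1n\log\|\rho(\g^n)\|$, using that $|\g^n|$ grows like $n|\g|$, or argue via the attraction rate on $\P(\R^d)$ as in Lemmas \ref{lema:lambda2hyp} and \ref{lema:lambda2}. Without establishing this lower bound (or, alternatively, constructing the positive reparametrizing function of Corollary \ref{teo:cociclo} with periods $\lambda_1(\rho\g)$ and bounding it away from $0$ and $\infty$ on the compact space $\UG$), the finiteness claim is assumed, not proved.
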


If $V$ is a finite dimensional vector space we will consider the distance $d_\P$ on $\P(V),$ induced by a Euclidean metric on $V.$ An important remark is that the entropy of a convex representation is not necessarily the Hausdorff dimension of $\xi(\bord\G)$ (see Remark \ref{obs:hff} below). Our first result is the following:

\begin{teo1} Let $\G$ be a convex cocompact group of a $\CAT(-1)$ space $X$ and let $\rho:\G\to\PGL(d,\R)$ be an irreducible convex representation with $d\geq3.$ Then $$\alpha h_\rho\leq \hX\textrm{ and } \alpha\h_\rho\leq\hX,$$ when $\xi$ is $\alpha$-H\"older. 
\end{teo1}

Observe that the dimension $d$ of $\R^d$ does not appear in the inequality. 

Consider $\Ad:\PGL(d,\R)\to\PGL(\frak{sl}(d,\R))$ the Adjoint representation. If $\rho:\G\to\PGL(d,\R)$ is an irreducible convex representation then $\Ad\rho:\G\to\PGL(\frak{sl}(d,\R))$ is not necessarily irreducible but there is a natural subspace $V_\rho\subset\frak{sl}(d,\R)$ such that $$\A_\rho=\Ad\rho|V_\rho:\G\to\PGL(V_\rho)$$ is again irreducible and convex (see Lemma \ref{lema:adjunta}). The representation $\A_\rho$ will be refered to as \emph{the irreducible adjoint representation of $\rho$,} and will play an important role on understanding rigidity for Hilbert's entropy.

A simple computation shows that the Hilbert entropy of $\rho$ is related to the spectral entropy of $\A_\rho,$ namely $\h_\rho=2h_{\A_\rho}.$ Nevertheless, applying this relation to the first inequality in Theorem A, gives the bad upper bound $\alpha\h_\rho\leq 2\hX.$

\section{Examples}

There are three examples of irreducible convex representations of $\G$ of particular interest.

Recall that the group $\PSO(1,k),$ of projective transformations preserving a bilinear form of signature $(1,k),$ is isomorphic to the orientation preserving isometry group $\isom_+\H^k,$ of the $k$-dimensional hyperbolic space. Throughout this work, we will refer to the representation $\vo\phi:\isom_+\H^k \to \PSO(1,k)$ (or any of its conjugates $g\vo\phi g^{-1}$ with $g\in\PGL(k+1,\R)$) as \emph{the Klein model} of $\H^k.$ 

\begin{obs}\label{obs:lipschitz} The Klein model of $\H^k$ induces an equivariant map $\bord\H^k\to\P(\R^{k+1}).$ This equivariant map is a bi-Lipschitz homeomorphism onto its image.
\end{obs}

\subsection{Benoist Representations} If $\rho:\G\to\PGL(k+1,\R)$ preserves a proper open convex set $\Omega_\rho$ of $\P(\R^{k+1})$ and $\rho\G\/\Omega_\rho$ is compact, then $\rho$ is called a \emph{Benoist representation}\footnote[2]{These are also called \emph{divisible convex sets with strictly convex boundary} or \emph{strictly convex projective structures on closed manifolds}.}. Results from Benoist \cite{convexes1}, imply that Benoist representations are irreducible convex representations (see \cite{quantitative} for details). 

The Hilbert entropy of $\rho$ is the topological entropy of the geodesic flow of $\rho\G\/\Omega_\rho,$ associated to the Hilbert metric. Crampon \cite{crampon} proved that the Hilbert entropy verifies $\h_\rho\leq k-1=\dim\partial\Omega_\rho,$ and equality holds only when $\Omega_\rho$ is an ellipsoid, i.e. $\G$ acts cocompactly on $\H^k,$ and $\rho$ extends to the Klein model of $\H^k.$

Notice that $\partial\Omega_\rho=\xi(\bord\G)$ is topologically a $k-1$ dimensional sphere, hence when $\Omega_\rho$ is not an ellipsoid,$\h_\rho$ is not the Hausdorff dimension of $\xi(\bord\G).$

\subsection{Convex cocompact groups in $\H^k$}  Consider a convex cocompact group $\phi:\G\to\isom_+\H^k.$ The composition of $\phi$ with the Klein model of $\H^k$ gives rise to a convex representation $\phi':\G\to\PGL(k+1,\R).$ 

In this setting, $\phi\G$ is Zariski-dense in $\isom_+\H^k$ if and only if, up to finite index, $\phi\G$ does not have an invariant totally geodesic copy of $\H^{k-1}.$ If this is the case, the convex representation $\phi'\G$ is irreducible.

An easy computation shows that the spectral entropy of $\phi',$ and the Hilbert entropy, coincide with the topological entropy of the geodesic flow of $\phi\G\/\H^k,$ which in turn coincides with the Hausdorff dimension of the limit set $\Lim_{\phi\G}$ on $\bord\H^k,$ (Sullivan \cite{sullivan}).

Assume now that $\G=\pi_1\M$ is the fundamental group of a closed $k$-dimensional hyperbolic manifold, it is well known that $\hX=k-1.$ Consider now a convex co-compact action $\phi:\pi_1\M\to\isom_+\H^n$ with $n\geq k.$ As we explained before, Bourdon states that $h_{\phi}\geq k-1.$

In light of the last examples, one sees that a deformation of $$\pi_1\M\to\isom_+\H^k\to\PGL(k+1,\R)$$ decreases Hilbert's entropy, but on the contrary, a deformation of $$\pi_1\M\to\isom_+\H^k\to\isom\H^n$$ increases Hilbert's entropy. As a conclusion, the Hilbert entropy of a convex representation of $\pi_1\M$ may be greater or smaller than $\dim\M-1,$ nevertheless the quantity $\alpha \h$ has to remain bounded by this number. Theorem A is thus optimal in this generality.

\subsection{Hitchin representations and small deformations of exterior products} Consider a closed oriented hyperbolic surface $\E,$ and say that a representation $\rho:\pi_1\E\to\PSL(d,\R)$ is \emph{Fuchsian} if it factors as $$\rho=\tau_d\circ{\sf{f}},$$ where $\tau_d:\PSL(2,\R)\to\PSL(d,\R)$ is the irreducible linear action (unique modulo conjugation) of $\PSL(2,\R)$ on $\R^d,$ and $\sf f: \pi_1\E\to\PSL(2,\R)$ is a choice of a hyperbolic metric on $\E.$  A \emph{Hitchin component} of $\PSL(d,\R),$ is a connected component of $\hom(\pi_1\E,\PSL(d,\R))$ containing a Fuchsian representation. As Hitchin \cite{hitchin} proves, representations in the Hitchin component are irreducible.

Recall that a (complete) \emph{flag} of $\R^d$ is a collection of subspaces $\{V_i\}_{i=0}^d,$ such that $V_i\subset V_{i+1}$ and $\dim V_i=i.$ The space of flags is denoted by $\scr F.$ Two flags $\{V_i\}$ and $\{W_i\}$ are in \emph{general position}, if for every $i$ one has $$V_i\oplus W_{d-i}=\R^d.$$

Labourie \cite{labourie} proves that if $\rho:\pi_1\E\to\PSL(d,\R)$ is a representation in a Hitchin component, then there exists a $\rho$-equivariant H\"older-continuous map $\z:\bord\pi_1\E\to\scr F,$ such that the flags $\z(x)$ and $\z(y)$ are in general position when $x,y\in\bord\pi_1\E$ are distinct.

Considering thus $\xi=\z_1$ the first coordinate of $\z,$ and $\xi^*=\z_d$ the last coordinate of $\z,$ one obtains an irreducible convex representation. Moreover, let $\L^n\R^d$ be the $n$-th exterior power of $\R^d.$ An $n-$dimensional subspace is sent to a line on $\L^n\R^d,$ hence Labourie's theorem implies that the composition $\L^n\rho:\pi_1\E\to\PSL(\L^n\R^d)$ is again convex. 

Finally, if $\rho$ is Zariski-dense on $\PGL(d,\R),$ then $\L^n\rho$ is irreducible. Guichard and Wienhard \cite{olivieranna} have shown that convex irreducible representations form an open set on the space of representations. Hence small deformations of $\L^n\rho$ are still irreducible and convex.

\begin{obs}\label{obs:hff} Labourie's statement implies that if $\rho:\pi_1\E\to\PGL(d,\R)$ is a Hitchin representation, then the image $\xi(\bord\pi_1\E)$ is a curve of class $\clase^1$ (even thought the map $\xi$ is only H\"older). Hence, neither entropy of $\rho$ can be interpreted as the Hausdorff dimension of $\xi(\bord\pi_1\E).$ For example, if $\rho$ is Fuchsian, then an easy computation shows that $h_\rho=\h_\rho=2/(d-1),$ even thought the limit curve is a polynomial.
\end{obs}

\section{Rigidity statements}

For a convex representation $\rho:\G\to\PGL(d,\R)$ and a fixed action of $\G$ on a $\CAT(-1)$ space $X,$ denote by $$\alpha_\rho=\sup\{\alpha\in\R_+^*: \xi:\Lim_\G\to\P(\R^d) \textrm{ is $\alpha$-H\"older}\},$$ the ``best'' H\"older exponent of the equivariant map $\xi.$ Remark that $\xi$ is not necessarily $\alpha_\rho$-H\"older.

\begin{teoB}[Spectral entropy rigidity] Let $\G$ be a Zariski-dense convex cocompact group of $\isom_+\H^k,$ and consider a convex irreducible representation $\rho:\G\to\PGL(d,\R)$ with $d\geq3$ such that $$\alpha_\rho h_\rho=\hX.$$ Then $d=k+1,$ $\alpha_\rho=1$ and $\rho$ extends to $\vo\rho:\isom_+\H^k\to\PGL(k+1,\R)$ as the Klein model of $\H^k.$
\end{teoB}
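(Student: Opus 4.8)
\emph{Proof strategy.} The plan is to transport the equality to the geodesic flow $\phi$ restricted to $\UG$, exploit the thermodynamic formalism there, and then feed the resulting rigidity of a H\"older cocycle into Labourie's cross ratio. Recall from \cite{quantitative,presion} that $h_\rho$ is the topological entropy of a reparametrization $\phi^{f_\rho}$ of $\phi$ by a positive H\"older function $f_\rho$ whose period over the closed orbit $[\g]$ is $\lambda_1(\rho\g)$; equivalently $h_\rho$ is the unique real with $P(\phi,-h_\rho f_\rho)=0$, whereas $\hX$ is the unique real with $P(\phi,-\hX\1)=0$, where $\1$ is the constant function $1$ (whose periods are the $|\g|$). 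First I would recall that, in the proof of Theorem~A, the $\alpha$-H\"older bound on $\xi$ for a visual metric on $\Lim_\G$ is converted into a pointwise lower bound $\co\geq\alpha\, b+O(1)$ for the first Busemann cocycle $\co$ of $\rho$ in terms of the Busemann cocycle $b$ of $\H^k$; evaluating on closed orbits and passing to powers this yields $\lambda_1(\rho\g)\geq\alpha|\g|$ for every $\alpha<\alpha_\rho$, hence $\lambda_1(\rho\g)\geq\alpha_\rho|\g|$ for all $\g\in\G$.

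Next I would feed in the equality $\alpha_\rho h_\rho=\hX$. Let $\mu_\rho$ be the equilibrium state of $-h_\rho f_\rho$ for $\phi|\UG$. On one hand $0=P(\phi,-h_\rho f_\rho)=h_{\mu_\rho}(\phi)-h_\rho\int f_\rho\,d\mu_\rho$, so $\int f_\rho\,d\mu_\rho=h_{\mu_\rho}(\phi)/h_\rho\leq\hX/h_\rho=\alpha_\rho$; on the other hand, periodic-orbit measures being weak-$*$ dense and $\int f_\rho\,d\nu_{[\g]}=\lambda_1(\rho\g)/|\g|\geq\alpha_\rho$, every invariant measure has $\int f_\rho\geq\alpha_\rho$. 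Hence $\int f_\rho\,d\mu_\rho=\alpha_\rho$, $h_{\mu_\rho}(\phi)=\hX$, and $\mu_\rho$ is the Bowen--Margulis measure of $\phi$ (it has full support, since $\phi|\UG$ is a topologically transitive Anosov flow). Two H\"older potentials over such a flow with the same equilibrium state are cohomologous up to an additive constant, so $-h_\rho f_\rho$ and $-\hX\1$ differ by a coboundary plus a constant; a comparison of periods using $\lambda_1(\rho\g)\geq\alpha_\rho|\g|$ forces that constant to vanish. Therefore $f_\rho$ is cohomologous to $\alpha_\rho\1$, that is $\lambda_1(\rho\g)=\alpha_\rho|\g|$ for every $\g$, and the $\co$-Patterson--Sullivan density at exponent $h_\rho$ lies in the measure class of the hyperbolic visual density $\nu_{\H^k}$.

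Then I would bring in Labourie's cross ratio $\b_\rho$ together with the hypothesis $\b_\rho\geq0$. Running the previous two paragraphs for the dual convex representation $\rho^*$ (whose equivariant map is $\xi^*$), the hypothesis $\b_\rho\geq0$ being precisely what makes the equality case pass to $\rho^*$, gives likewise $-\lambda_d(\rho\g)=\lambda_1(\rho^*\g)=\alpha_\rho|\g|$. Consequently the period $e^{-(\lambda_1(\rho\g)-\lambda_d(\rho\g))}=e^{-2\alpha_\rho|\g|}$ of $\b_\rho$ over $[\g]$ is the $\alpha_\rho$-th power of the period of the Klein-model cross ratio $\b_{\H^k}$. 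Using density of pairs of fixed points in $(\bord\G)^{(2)}$, continuity of both cross ratios, and the positivity $\b_\rho\geq0$, I would upgrade this to the pointwise identity $\b_\rho=(\b_{\H^k})^{\alpha_\rho}$ on $(\bord\G)^{(4)}$. Cross-ratio rigidity (Labourie \cite{cross}), i.e. the fact that the cross ratio of a convex representation cannot be a non-trivial positive power of a hyperbolic one, then forces $\alpha_\rho=1$ and $\b_\rho=\b_{\H^k}$. The pair $(\xi,\xi^*)$ now realises $\Lim_\G$ projectively as $\bord\H^k$, so the span of $\xi(\Lim_\G)$ is a $\rho$-invariant subspace carrying a non-degenerate quadratic form of signature $(1,k)$; irreducibility of $\rho$ — which is exactly what the hypothesis that $\G$ has no invariant totally geodesic $\H^{k-1}$ is there to guarantee, and which rules out the lower-dimensional reducible models — gives $d=k+1$ and $\rho=\vo\phi|_\G$, the Klein model of $\H^k$.

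The main obstacle is the third step. The first two are a fairly mechanical application of the thermodynamic formalism once the dictionary with $\phi|\UG$ is set up; the delicate points are: transferring the equality case to the dual representation so as to control $\lambda_d$ — and hence the \emph{whole} period of $\b_\rho$ rather than only a one-sided bound — which is where the positivity $\b_\rho\geq0$ is genuinely used (alternatively one may try to route this through the Hilbert entropy and the identity $\h_\rho=2h_{\A_\rho}$); and then the passage from equality of periods to the pointwise cross-ratio identity, the extraction of $\alpha_\rho=1$, and the reconstruction of the invariant quadratic form of signature $(1,k)$.
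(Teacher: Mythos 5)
Your first two steps (the thermodynamic argument showing that equality forces $m_{-h_\rho f}$ to be the measure of maximal entropy, hence $\lambda_1(\rho\g)=\alpha_\rho|\g|$ for all non-torsion $\g$) coincide with the paper's Remark \ref{obs:igualdad} and are fine. The gap is in everything after that. First, the role you assign to the hypothesis $\b_\rho\geq0$ is wrong: positivity of the cross ratio has nothing to do with ``passing the equality case to the dual representation'' $\rho^*$ (and that transfer is in fact unjustified: $h_{\rho^*}=h_\rho$, but the H\"older exponent of $\xi^*$ has no reason to equal $\alpha_\rho$, so you never obtain $-\lambda_d(\rho\g)=\alpha_\rho|\g|$). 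This detour is also unnecessary: by Lemma \ref{lema:crossuniquely} and Lemma \ref{lema:iguales}, the \emph{modulus} $|\b_\rho|$ is already determined by the single family of periods $\{\lambda_1(\rho\g)\}$, via Ledrappier's theorem and the Gromov product of the pair $(\beta_\rho,\vo\beta_\rho)$; the hypothesis $\b_\rho\geq0$ is used only to upgrade $|\b_\rho|={\B_{\H^k}}^{\alpha_\rho}|\Lim_\G$ to $\b_\rho={\B_{\H^k}}^{\alpha_\rho}|\Lim_\G.$ Moreover your proposed passage from ``equality of periods'' to the pointwise identity of cross ratios by ``density of fixed points and continuity'' is not an argument: recovering a cross ratio from its periods is precisely what the cocycle/Gromov-product machinery achieves, and the naive limit formula one would need for a direct density argument is stated in the paper as an open Question.

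Second, the endgame is missing. There is no theorem ``the cross ratio of a convex representation cannot be a non-trivial positive power of a hyperbolic one'' to cite; proving exactly this is the content of the paper's argument, and it requires the following chain: since $\rho$ is irreducible, $\b_\rho$ has rank $d$; Theorem \ref{prop:jepas} (Benoist's Hardy-field and Zariski-density techniques) then shows that ${\B_{\H^k}}^{\alpha_\rho}$ has rank $d$ on \emph{all} of $\bord\H^k$, not just on $\Lim_\G$ --- and this is where the hypothesis that $\G$ preserves no totally geodesic $\H^{k-1}$ (i.e.\ Zariski density of $\G$ in $\isom\H^k$) is used, whereas in your write-up it is never used at all: you conflate it with irreducibility of $\rho$, which is a separate hypothesis of the theorem. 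Once ${\B_{\H^k}}^{\alpha_\rho}$ has finite rank and is invariant under the full group $\isom\H^k$, Labourie's Theorem \ref{teo:birraport} (existence and uniqueness) produces the extension $\vo\rho:\isom\H^k\to\PGL(d,\R)$; only then does one get $\alpha_\rho=1$, because the equivariant map of $\vo\rho$ is smooth, hence $\xi$ is Lipschitz, and then $d=\rk\b_{\vo\rho}=\rk\B_{\H^k}=k+1$ with $\vo\rho$ the Klein model, again by uniqueness in Labourie's theorem. Your alternative ending --- deducing $\alpha_\rho=1$ from an unproved rigidity statement and reconstructing an invariant quadratic form of signature $(1,k)$ from $\b_\rho=\B_{\H^k}$ --- is not substantiated and, without the rank-extension step, cannot be.
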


A slight modification of the proof of Theorem B gives the following weaker statement for Hilbert's entropy.

\begin{cor}[Hilbert entropy rigidity]\label{cor:hilbert} Let $\G$ be a Zariski-dense convex cocompact group of $\isom_+\H^k,$ and consider a convex irreducible representation $\rho:\G\to\PGL(d,\R)$ with $d\geq3,$ such that $$\alpha_\rho \h_\rho=\hX.$$ Then $V_\rho=\frak{so}(1,k)$ and the adjoint irreducible representation $\A_\rho:\G\to\PGL(\frak{so}(1,k))$ extends to $\vo{\A_\rho}:\isom_+\H^k\to\PGL(\frak{so}(1,k))$ as the adjoint representation of the Klein model of $\H^k.$ 
\end{cor}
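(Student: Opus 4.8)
The plan is to run the proof of Theorem~B for the irreducible adjoint representation $\A_\rho=\Ad\rho|V_\rho$ in place of $\rho$, using the relation $\h_\rho=2h_{\A_\rho}$. First I would assemble the dictionary between $\rho$ and $\A_\rho$ given by Lemma~\ref{lema:adjunta}: $\A_\rho\colon\G\to\PGL(V_\rho)$ is irreducible and convex, its equivariant maps are $x\mapsto[\xi(x)\otimes\xi^*(x)]\in\P(V_\rho)$ and the map dual to it through the $\Ad$-invariant form on $V_\rho$, and $h_{\A_\rho}=\tfrac12\h_\rho$. Unwinding~(\ref{equation:razon}) with these maps gives the identity
$$\b_{\A_\rho}(x,y,z,t)=\b_\rho(x,y,z,t)\,\b_\rho(y,x,t,z),$$
so that $\b_{\A_\rho}$ is a product of two values of $\b_\rho$; hence $\b_\rho\geq0$ forces $\b_{\A_\rho}\geq0$, and the period of $\b_{\A_\rho}$ over $[\g]$ equals $2\big(\lambda_1(\rho\g)-\lambda_d(\rho\g)\big)=2\lambda_1(\A_\rho\g)$. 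One should also note that $\dim V_\rho\geq3$, so that Theorem~B is applicable to $\A_\rho$: otherwise $h_{\A_\rho}$, hence $\h_\rho$, would vanish, contradicting Proposition~\ref{prop:finito} for $\A_\rho$.

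The core of the argument is to read the hypothesis $\alpha_\rho\h_\rho=\hX$ as the equality case of the second inequality of Theorem~A, and then to drive the proof of that inequality into the rigid regime exactly as the proof of Theorem~B does for the first inequality. Recall that $\alpha\h_\rho\leq\hX$ is proved by estimating the Hilbert length $\tfrac12\big(\lambda_1(\rho\g)-\lambda_d(\rho\g)\big)$ --- equivalently the period of $\b_\rho$ --- against the visual metric of $\Lim_\G$ through the pair $(\xi,\xi^*)$, and by comparing the $\b_\rho$-conformal density of exponent $\h_\rho$ with the Patterson--Sullivan density of $\G$ on $\Lim_\G$. Under equality all these comparisons must be sharp, and the scheme of the proof of Theorem~B then yields that $\alpha_\rho=1$, that $\xi$ --- and therefore the tensorial map $\xi\otimes\xi^*$ --- is bi-Lipschitz onto its image, and that the $\b_\rho$-conformal density coincides up to a bounded factor with the push-forward of the Patterson--Sullivan density. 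This is the step I expect to be the main obstacle: it requires re-running the \emph{proof} of the H\"older estimate behind Theorem~A for the Hilbert length rather than quoting its statement, keeping track of the place where the factor $2$ in $\h_\rho=2h_{\A_\rho}$ is absorbed, and transporting the resulting metric rigidity from $\xi$ to the equivariant map $\xi\otimes\xi^*$ of $\A_\rho$.

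From this sharpness I would extract, as in Theorem~B, that $\lambda_1(\A_\rho\g)=\lambda_1(\rho\g)-\lambda_d(\rho\g)$ is proportional to the translation length $|\g|$ for all $\g\in\G$; equivalently, that $\b_{\A_\rho}$ agrees up to scale with a positive power of the cross ratio of the Klein model of $\H^k$. Since $\A_\rho$ is irreducible and convex, $\dim V_\rho\geq3$, $\b_{\A_\rho}\geq0$, and $\G$ preserves no totally geodesic copy of $\H^{k-1}$, the rigidity part of the proof of Theorem~B applies to $\A_\rho$ and identifies it, up to conjugation, with the restriction to $V_\rho$ of the adjoint representation of the Klein model of $\H^k$; this is the assertion of the corollary. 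Note that, unlike in Theorem~B, the argument sees only the invariants $\lambda_1(\rho\g)-\lambda_d(\rho\g)$ and the map $\xi\otimes\xi^*$ --- that is, the data of $\A_\rho$, not of $\rho$ --- which is why the conclusion is phrased in terms of $\A_\rho$.
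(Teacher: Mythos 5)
Your outline has the same skeleton as the paper's argument: pass to the irreducible adjoint representation $\A_\rho$, transfer positivity through the identity $\b_{\A_\rho}(x,y,z,t)=\b_\rho(x,y,z,t)\b_\rho(y,x,t,z)$ of Lemma \ref{lema:crossadjunto}, extract from the equality that the Hilbert lengths $\lambda_1(\rho\g)-\lambda_d(\rho\g)$ are proportional to the translation lengths $|\g|$, identify $\b_{\A_\rho}$ with a power of $\B_{\H^k}$ restricted to $\Lim_\G$, and then run the finite-rank/extension machinery (Theorem \ref{prop:jepas} plus Labourie's Theorem \ref{teo:birraport}) for $\A_\rho$. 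But two steps are not right as written. First, the equality $\alpha_\rho\h_\rho=\hX$ does not give you at that stage that $\alpha_\rho=1$, nor that $\xi$ is bi-Lipschitz, nor is there any comparison of a ``$\b_\rho$-conformal density'' with a Patterson--Sullivan density in the paper: the equality case of Theorem A (Remark \ref{obs:igualdadhilbert}, obtained from the thermodynamic formalism, i.e.\ the fact that $m_{-\h_\rho f}$ must be the measure of maximal entropy so that $f$ is Liv\v sic-cohomologous to a constant) yields \emph{only} the relation $\lambda_1(\rho\g)-\lambda_d(\rho\g)=2\alpha_\rho|\g|$; in the paper $\alpha_\rho=1$ is a conclusion available only \emph{after} the extension to $\isom\H^k$ exists. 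Your intermediate claims are therefore unjustified where you place them (fortunately you only use the proportionality downstream).

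Second, you cannot literally ``run the proof of Theorem B for $\A_\rho$ in place of $\rho$'': the hypothesis translates into $\alpha_\rho h_{\A_\rho}=\hX/2$, not into $\alpha_{\A_\rho}h_{\A_\rho}=\hX$, and Theorem B's endgame applied verbatim to $\A_\rho$ would identify the extension with the Klein model itself (forcing $\dim V_\rho=k+1$), which is not the assertion of the corollary. The paper instead uses Lemma \ref{lema:crossuniquely} to get $|\b_{\A_\rho}|={\B_{\H^k}}^{2\alpha}|\Lim_\G$ (note also your factor: the cocycle $\beta_\rho+\vo\beta_\rho$ inducing $|\b_{\A_\rho}|$ has period $\lambda_1(\A_\rho\g)=\lambda_1(\rho\g)-\lambda_d(\rho\g)$, not twice that), then positivity gives $\b_{\A_\rho}={\B_{\H^k}}^{2\alpha}|\Lim_\G$, Theorem \ref{prop:jepas} and Theorem \ref{teo:birraport} produce $\vo\A_\rho:\isom\H^k\to\PGL(V_\rho)$, and only then one argues $\alpha=1$ and computes the restricted highest weight, $\chi_{\vo\A_\rho}(\lambda_{\H^k}(\g))=2|\g|$, which by Remark \ref{obs:klein} and Proposition \ref{prop:lattices} is what pins $\vo\A_\rho$ down as the adjoint representation of the Klein model rather than some other proximal representation of $\isom\H^k$. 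This final weight computation is exactly the step your proposal asserts (``the rigidity part of Theorem B identifies it'') but does not supply, and it is where the factor $2$ you worried about is actually used.
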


The proofs of Theorem B and Corollaries \ref{cor:hilbert} are very similar and postponed to Section \ref{section:Aigualdad}.

\subsection{Statements for hyperconvex representations}\label{subsection:hyper} 

The fact that equality in Theorem B can only hold for a representation $\rho:\pi_1\E\to\PSL(3,\R),$ suggests that the upper bound for $\alpha_\rho h_\rho$ is not optimal, for Hitchin representations on $\PSL(d,\R),$ say. We will now focus on improving the bound when more information on the representation $\rho$ is given.

Let $G$ be a real-algebraic semisimple Lie group without compact factors, $P$ a minimal parabolic subgroup of $G,$ and denote by $\scr F=G/P$ the \emph{Furstenberg boundary} of the symmetric space of $G.$

Let $K$ be a maximal compact subgroup of $G,$ let $\tau$ be the Cartan involution on $\frak g$ whose fixed point set is the Lie algebra of $K.$ Consider $\frak p=\{v\in\frak g: \tau v=-v\}$ and $\frak a$ a maximal abelian subspace contained in $\frak p.$ Let $\E$ be the set of (restricted) roots of $\frak a$ on $\frak g.$ Fix $\frak a^+$ a closed Weyl chamber and let $\E^+$ be a system of positive roots on $\E$ associated to $\frak a^+.$ Denote by $\Pi$ the set of simple roots associated to the choice $\E^+.$ 

The space $\scr F$ can be embedded in a product of projective spaces $\prod_{\a\in\Pi}\P(V_\a)$ (see Section \ref{section:reps}), we will consider the metric on $\scr F$ induced by this embedding.

The product $\scr F\times\scr F$ has a unique open $G$-orbit denoted by $\posgen.$ For example, if $G=\PGL(d,\R)$ then $\scr F$ is the space of complete flags of $\R^d,$ and $\posgen$ is the space of flags in general position.

\begin{defi} We say that a representation $\rho:\G\to G$ is \emph{hyperconvex} if there exists a H\"older-continuous equivariant map $\z:\bord\G\to\scr F,$ such that if $x,y\in\bord\G$ are distinct, then $(\z(x),\z(y))$ belongs to $\posgen.$
\end{defi}

Hyperconvex representations on $\PGL(d,\R)$ are, of course, convex. As explained before, Labourie \cite{labourie} proved that representations in a Hitchin component are hyperconvex. 

We will say that $g\in G$ is $\R$\emph{-regular} if it is diagonalizable over $\R,$ \emph{elliptic} if it is contained in a compact subgroup of $G,$ or \emph{unipotent} if all its eigenvalues are equal to 1.

Recall that Jordan's decomposition states that every $g\in G$ can be written as a product $g=g_eg_hg_u,$ where $g_e,g_h,g_u\in G$ commute, $g_e$ is elliptic, $g_h$ is $\R$-regular and $g_u$ is unipotent.

For $g\in G$ denote by $\lambda(g)\in\frak a^+$ its \emph{Jordan projection}, this is the unique element on $\frak a^+$ such that $\exp\lambda(g)$ is conjugated to the $\R$-regular element on the Jordan decomposition of $g.$

If $\rho:\G\to G$ is a hyperconvex representation, and $\varphi\in\frak a^*$ is a linear form such that $\varphi|\frak a^+>0,$ we define the \emph{entropy of $\rho$ relative to} $\varphi$ by  $$h_\varphi=\lim_{s\to\infty}\frac1t\log\#\{[\g]\in[\G]\textrm{ non-torsion}:\varphi(\lambda(\rho\g))\leq t\}.$$ 

\begin{prop}[{\cite[Section 7]{quantitative}}]\label{prop:finito2} Let $\rho:\G\to G$ a Zariski-dense hyperconvex representation, and consider  $\varphi\in\frak a^*$ such that $\varphi|\frak a^+-\{0\}>0,$ then $h_\varphi\in(0,\infty).$
\end{prop}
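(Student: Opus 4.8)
The plan is to squeeze the quantity $\varphi(\lambda(\rho\g))$ between two linear functions of the translation length $|\g|$, use this to bracket the orbit‑counting exponent by positive finite multiples of $\hX$, and then upgrade the bracketing to an honest limit by reparametrising the topological Anosov flow $\phi|\UG$. Throughout I fix a basepoint $o\in X$ and write $a(\cdot)\in\frak a^+$ for the Cartan projection of $G$; I will repeatedly use that $\lim_n\tfrac1n a(\rho\g^n)=\lambda(\rho\g)$ and $\lim_n\tfrac1n d_X(o,\g^n o)=|\g|$.

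First I would prove the \emph{upper} linear bound. Since $\G$ is convex co-compact it is a hyperbolic group whose orbit map $\g\mapsto\g o$ is a quasi-isometric embedding, and $\rho$ being a homomorphism of finitely generated groups yields $\|a(\rho\g)\|\leq L\,d_X(o,\g o)+L'$ for suitable $L,L'>0$; applying this to $\g^n$, dividing by $n$ and letting $n\to\infty$ gives $\|\lambda(\rho\g)\|\leq L|\g|$, hence $\varphi(\lambda(\rho\g))\leq \|\varphi\|\,L\,|\g|=:c_+|\g|$. The \emph{lower} bound is the substantive step. A hyperconvex representation is $P$-Anosov: the transverse Hölder boundary map $\z:\bord\G\to\scr F$ is automatically dynamics preserving, and together with transversality this characterises Anosovity for a hyperbolic group. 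Consequently there are $\mu_0>0$, $C_0$ with $\a(a(\rho\g))\geq \mu_0\, d_X(o,\g o)-C_0$ for every $\g\in\G$ and every simple root $\a\in\Pi$; passing to powers as above gives $\a(\lambda(\rho\g))\geq\mu_0|\g|$ for all $\a\in\Pi$. Since $G$ is semisimple, $\Pi$ is a basis of $\frak a^*$, so there is a unique $a_1$ with $\a(a_1)=1$ for all $\a\in\Pi$; it lies on the barycentre half-line $\sf{bar}_{\frak a^+}$, in particular $a_1\in\frak a^+\setminus\{0\}$, and the inequalities just obtained say precisely that $\lambda(\rho\g)\in\mu_0|\g|\,a_1+\frak a^+$. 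As $\varphi|\frak a^+-\{0\}>0$ we have $\varphi\geq0$ on $\frak a^+$ and $\varphi(a_1)>0$, whence $\varphi(\lambda(\rho\g))\geq\mu_0\varphi(a_1)\,|\g|=:c_-|\g|$ with $c_->0$.

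With $c_-|\g|\leq\varphi(\lambda(\rho\g))\leq c_+|\g|$ in hand, the inclusions
\[
\{[\g]:|\g|\leq s/c_+\}\subseteq\{[\g]:\varphi(\lambda(\rho\g))\leq s\}\subseteq\{[\g]:|\g|\leq s/c_-\}
\]
together with $\lim_{t\to\infty}t^{-1}\log\#\{[\g]\in[\G]:|\g|\leq t\}=\hX\in(0,\infty)$ (which holds because $\rho$ Zariski dense forces $\G$ non-elementary and $\G$ is convex co-compact) bound the liminf and limsup of $s^{-1}\log\#\{[\g]:\varphi(\lambda(\rho\g))\leq s\}$ between $\hX/c_+$ and $\hX/c_-$, both in $(0,\infty)$. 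To obtain the limit itself I would invoke thermodynamic formalism: the Anosov property provides a Hölder cocycle $\G\times\bord\G\to\frak a$ with period $\lambda(\rho\g)$ over the periodic orbit $[\g]$, so composing it with $\varphi$ gives a Hölder cocycle with periods $\varphi(\lambda(\rho\g))$; by the lower bound its periods are positive, hence by Livšic it is cohomologous to a strictly positive Hölder function $f$ on $\UG$ with $c_-\leq f\leq c_+$. Reparametrising $\phi|\UG$ by $f$ and applying the prime-orbit theorem for topological Anosov flows then shows that the limit defining $h_\varphi$ exists and equals the topological entropy of $\phi^f|\UG$, which is finite and positive since $f$ is squeezed between two positive constants and $\hX>0$. (For $G=\PGL(d,\R)$ and $\varphi=\lambda_1$ this last step reduces to Proposition \ref{prop:finito}.)

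The main obstacle is the lower linear bound, i.e. showing that hyperconvexity forces $\varphi\circ\lambda\circ\rho$ to grow at least linearly in $|\g|$ — equivalently that the reparametrising function $f$ is cohomologous to a \emph{positive} one. This is exactly the place where the Anosov/hyperconvex hypothesis is indispensable: without it the ratio $\varphi(\lambda(\rho\g))/|\g|$ could be unbounded below and $h_\varphi$ could be infinite, so one genuinely needs the uniform root estimate $\a(a(\rho\g))\gtrsim d_X(o,\g o)$ supplied by the $P$-Anosov property together with the strict positivity of $\varphi$ on the Weyl chamber.
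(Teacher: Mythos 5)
Your overall architecture is sound, and in its final step it coincides with the argument this statement actually rests on: the Proposition is quoted from \cite{quantitative}, and the mechanism (reproduced in this paper as Theorem \ref{teo:cartan} and Lemma \ref{lema:funcpositiva}) is to produce a H\"older function $F:\UG\to\frak a$ with $\int_{[\g]}F=\lambda(\rho\g)$ for every non torsion $[\g]$, such that $\varphi(F)$ is Liv\v sic-cohomologous to a \emph{positive} function; then $h_\varphi$ is the topological entropy of the corresponding H\"older reparametrization of the topologically Anosov geodesic flow on the compact space $\UG$, hence an honest limit lying in $(0,\infty)$. Your last paragraph is exactly this, and your upper bound $\varphi(\lambda(\rho\g))\le c_+|\g|$ is unobjectionable. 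Where you genuinely differ is the lower linear bound: you import it from Anosov-representation theory (uniform Cartan-projection gaps), whereas the cited proof obtains positivity intrinsically, from proximality of the compositions with the Tits representations (Proposition \ref{prop:titss}, Lemma \ref{obs:remark}, Lemma \ref{lema:loxodromic}) and Ledrappier's cocycle-to-function correspondence (Theorem \ref{teo:cocicloss}), with no uniform estimate on Cartan projections needed. Your route, when correct, buys an explicit two-sided comparison $c_-|\g|\le\varphi(\lambda(\rho\g))\le c_+|\g|$; the paper's route is lighter and stays entirely inside the thermodynamic formalism.

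The step you must tighten is the black box itself. The assertion that a transverse, (automatically) dynamics-preserving H\"older boundary map ``characterises Anosovity'' is not the standard characterization: the known equivalences (Gu\'eritaud--Guichard--Kassel--Wienhard, Kapovich--Leeb--Porti) require in addition either a divergence hypothesis on the Cartan projections or a strengthened, uniform form of the dynamics-preserving condition; moreover, dynamics preservation is not a formal consequence of transversality alone --- in the present setting it comes from Zariski density, via irreducibility and proximality of each $\L_\a\circ\rho$ (Lemma \ref{obs:remark} together with Lemma \ref{lema:loxodromic}), a hypothesis your argument never actually invokes at this point. A correct version of your step does exist (for irreducible representations, transverse limit maps do yield the projective Anosov property for each $\L_\a\circ\rho$, hence $\a(\lambda(\rho\g))\ge\mu_0|\g|$ for every simple root $\a$), but as written the crucial inequality rests on a mis-stated theorem, and on machinery considerably heavier than what \cite{quantitative} uses. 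Two smaller points: ``by Liv\v sic'' is not the right attribution for ``periods bounded below linearly implies cohomologous to a positive function'' (that follows from positivity of $\int f\,dm$ for all invariant $m$, via the Anosov closing lemma), and the positive cohomologous representative need not satisfy $c_-\le f\le c_+$; only positivity, with some new bounds, is guaranteed --- which is all you actually use.
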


The \emph{barycenter} of the Weyl chamber $\frak a^+$ is the half line contained in $\frak a^+$ determined by $$\sf{bar}_{\frak a^+}=\{a\in\frak a^+:\a_1(a)=\a_2(a)\textrm{ for every pair }\a_1,\a_2\in\Pi\}.$$

\begin{teoC} Let $\rho:\G\to G$ be a Zariski-dense hyperconvex representation, and $\varphi\in\frak a^*$ a linear form such that $\varphi|\frak a^+-\{0\}>0,$ then $$\alpha h_\varphi\leq \hX\frac {\a(\sf{bar}_{\frak a^+})}{\varphi(\sf{bar}_{\frak a^+})},$$ where $\a\in\Pi$ is any simple root and $\z$ is $\alpha$-H\"older.
\end{teoC}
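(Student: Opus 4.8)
The plan is to turn the H\"older control on $\z$ into an orbitwise comparison between the translation length $|\g|$ of a loxodromic $\g\in\G$ on $X$ and its Jordan projection $\lambda(\rho\g)$, and then to deduce the inequality between the two counting functions defining $h_\varphi$ and $\hX$. The geometric heart of the argument is the estimate
$$\a\big(\lambda(\rho\g)\big)\ \geq\ \alpha\,|\g|\qquad\text{for every loxodromic }\g\in\G\text{ and every simple root }\a\in\Pi,$$
call it $(\star)$; equivalently, $\lambda(\rho\g)$ lies in the shifted cone $\{a\in\frak a^+:\a(a)\geq\alpha|\g|\text{ for all }\a\in\Pi\}$.

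To prove $(\star)$, fix a loxodromic $\g$ with attracting and repelling fixed points $\g^+,\g^-\in\bord\G\cong\Lim_\G$. For a visual metric on $\bord X$ the expansion rate of $\g$ at $\g^+$ equals the translation length $|\g|$ (a computation with Busemann functions, cf. Bourdon \cite{bourdon}), so for every $w\in\bord\G\setminus\{\g^-\}$ one has $\frac1n\log d_{\bord X}(\g^nw,\g^+)\to-|\g|$. On the linear side, $\z(\g^+)$ and $\z(\g^-)$ are transverse flags fixed by $\rho\g$; the structure theory of such representations (see \cite{quantitative}) gives that $\rho\g$ is proximal on each $\P(V_\a)$, with attracting line $\z_\a(\g^+)$, and that $\a(\lambda(\rho\g))$ equals the logarithm of the ratio of the two largest moduli of eigenvalues of $\rho\g$ on $V_\a$; moreover $(\z(w),\z(\g^-))\in\posgen$ forces $\z_\a(w)$, for $w\neq\g^-$, to avoid the $\rho\g$-invariant complement of $\z_\a(\g^+)$. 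Hence, for a suitable $w$ close to $\g^+$, $\frac1n\log d_{\P(V_\a)}\big(\rho\g^n\!\cdot\z_\a(w),\z_\a(\g^+)\big)\to-\a(\lambda(\rho\g))$. Combining these two limits through the equivariance $\z(\g^nw)=\rho\g^n\!\cdot\z(w)$, through the fact that $d_{\scr F}$ dominates each $d_{\P(V_\a)}$, and through the H\"older bound $d_{\scr F}(\z x,\z y)\leq K\,d_{\bord X}(x,y)^\alpha$, yields $-\a(\lambda(\rho\g))\leq -\alpha|\g|$, which is $(\star)$.

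Next is a convexity computation in $\frak a$. For the semisimple group $G$ the simple roots form a basis of $\frak a^*$, so $a\mapsto(\a(a))_{\a\in\Pi}$ is a linear isomorphism $\frak a\to\R^\Pi$ taking $\frak a^+$ onto $\R^\Pi_{\geq 0}$ and $\sf{bar}_{\frak a^+}$ onto the diagonal ray; the functional $\varphi$, being positive on $\frak a^+\setminus\{0\}$, becomes a linear functional on $\R^\Pi$ with strictly positive coefficients. Its minimum over the region $\{x\in\R^\Pi_{\geq 0}:x_\a\geq m\text{ for all }\a\}$ is attained at $(m,\dots,m)$, i.e. at the element of $\sf{bar}_{\frak a^+}$ on which every simple root takes the value $m$. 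Applying this with $m=\alpha|\g|$ and using $(\star)$, we obtain for every loxodromic $\g$ and every $\a\in\Pi$
$$\varphi\big(\lambda(\rho\g)\big)\ \geq\ \alpha\,|\g|\;\frac{\varphi(\sf{bar}_{\frak a^+})}{\a(\sf{bar}_{\frak a^+})},$$
the ratio on the right being positive and independent of the generator of $\sf{bar}_{\frak a^+}$ and of the choice of $\a$.

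Finally, set $c=\alpha^{-1}\,\a(\sf{bar}_{\frak a^+})/\varphi(\sf{bar}_{\frak a^+})$. The last inequality shows that $\varphi(\lambda(\rho\g))\leq s$ implies $|\g|\leq cs$, hence $\#\{[\g]\in[\G]:\varphi(\lambda(\rho\g))\leq s\}\leq\#\{[\g]\in[\G]:|\g|\leq cs\}$ for every $s$. Taking logarithms, dividing by $s$ and letting $s\to\infty$, the right-hand term tends to $c\,\hX$ by the periodic-orbit description of $\hX$ recalled in the introduction (and the left-hand limit exists and is finite by Proposition \ref{prop:finito2}); therefore $h_\varphi\leq c\,\hX$, which is exactly the claimed inequality. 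The main obstacle is $(\star)$: pinning down the sharp constant $\alpha$ in the comparison between the length spectrum of $X$ (appearing as the expansion rate of $\g$ on $\bord X$) and the simple-root spectrum of $\rho$ (appearing as the contraction rate of $\rho\g$ on each $\P(V_\a)$), and doing so while $\rho\g$ need not be $\R$-regular, so that one must argue with the Jordan projection and the loxodromic dynamics of $\rho\g$ rather than with literal diagonalizability.
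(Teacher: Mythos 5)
Your argument is correct in substance, and its geometric core $(\star)$ is exactly the paper's Lemma \ref{lema:todo}, established there the same way you indicate: Tits's representations (Proposition \ref{prop:titss}, Lemma \ref{obs:remark}) convert $\a(\lambda(\rho\g))$ into $\lambda_1-\lambda_2$ of $\L_\a\circ\rho(\g)$, and one plays the contraction rate towards the attracting line in $\P(V_\a)$ (Benoist's Lemma \ref{lema:lambda2}) against the boundary expansion rate $|\g|$ (Lemma \ref{lema:lambda2hyp}) through the $\alpha$-H\"older bound on $\z$. Where you genuinely depart from the paper is the passage from this orbitwise estimate to the entropy inequality. You write $\varphi$ in the basis of simple roots, observe that positivity of $\varphi$ on $\frak a^+-\{0\}$ forces all coefficients to be positive, deduce the pointwise bound $\varphi(\lambda(\rho\g))\geq\alpha|\g|\,\varphi({\sf{bar}}_{\frak a^+})/\a({\sf{bar}}_{\frak a^+})$ for every non torsion $\g$, and conclude by comparing the two counting functions directly. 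The paper instead runs the Thermodynamic Formalism: it realizes $\varphi(\lambda(\rho\g))$ as periods $\int_{[\g]}\varphi(F)$ of a H\"older function on $\UG$ (Theorem \ref{teo:cartan}), takes periodic orbits equidistributing towards the equilibrium state of $-h_\varphi f$ (Proposition \ref{prop:ruellebowen} and the Anosov closing Lemma \ref{teo:anosovclosing}), obtains $\alpha h_\varphi/\hX\leq \a(\lambda(\rho\g_n))/\varphi(\lambda(\rho\g_n))\,(1+\eps)$, and then maximizes ${\sf V}=\min_\a \a/\varphi$ over $\P(\frak a^+)$ via the simplex Lemma \ref{lema:bari} — the same linear algebra as your positive-coefficient computation, just organized as a max–min. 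Your route is more elementary and fully suffices for the inequality of Theorem C; what the paper's heavier machinery buys is the equality analysis (Remark \ref{obs:equalC}): when the bound is attained, the equilibrium state must be the measure of maximal entropy, whence $\varphi(\lambda(\rho\g))=c|\g|$ for all $\g$ by Liv\v sic-type rigidity — the indispensable input for Theorem D, which a pure counting comparison does not deliver.

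One imprecision to fix in your proof of $(\star)$: to obtain the exact limit $-\a(\lambda(\rho\g))$ you need a point $w$ such that $\z_\a(w)$ has nonzero component in the second characteristic space $V_{\lambda_2}$ of $\L_\a\rho(\g)$; transversality of $\z(w)$ with $\z(\g^-)$, or $w$ being close to $\g^+$, only guarantees that $\z_\a(w)$ avoids the repelling hyperplane, and if the $V_{\lambda_2}$-component vanished the convergence would be strictly faster and you would only bound $\lambda_j-\lambda_1$ for some $j>2$, which is not $(\star)$. The paper supplies such a $w$ from irreducibility of $\L_\a\circ\rho$ (Lemma \ref{obs:remark} together with Lemma \ref{lema:irreducible}, as in the proof of Lemma \ref{lema:1}), available here because $\rho$ is Zariski dense; with that reference inserted your argument is complete.
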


Note that the direction of $\frak a^+$ that gives the upper bound, does not depend on the linear form $\varphi.$

Denote by $$\alpha_\rho=\sup\{\alpha\in\R_+^*:\z:\Lim_\G\to\scr F\textrm{ is $\alpha$-H\"older}\}.$$

\begin{teoD}Let $\G$ be a Zariski-dense convex cocompact group of $\isom_+\H^k,$ and consider a Zariski-dense hyperconvex representation $\rho:\G\to G.$ Assume there exists $(\varphi\in\frak a^+)^*$ such that $$\alpha_\rho h_\varphi=\hX\frac {\a(\sf{bar}_{\frak a^+})}{\varphi(\sf{bar}_{\frak a^+})},$$ where $\a\in\Pi$ is any simple root. Then $\rho$ extends as an isomorphism $\vo\rho:\isom_+\H^k\to G.$
\end{teoD}

Theorem D together with a theorem of Guichard (\ref{Zariski-Guichard} below), give the following corollary whose proof is postponed to the end of this article. Recall that $\E$ is a closed oriented hyperbolic surface.

\begin{cor}\label{cors:hitchincor} Let ${\sf{f}}:\pi_1\E\to\PSL(2,\R)$ be a hyperbolization of $\E,$ and consider a representation in the Hitchin component $\rho:\pi_1\E\to\PGL(d,\R).$ Then $$\alpha_\rho h_\rho \leq \frac2{d-1}\textrm{ and }\alpha_\rho \h_\rho \leq \frac2{d-1},$$ and either equality holds only if $\rho=\tau_d\circ {\sf{f}},$ where $\tau_d:\PSL(2,\R)\to\PSL(d,\R)$ is the irreducible representation.
\end{cor}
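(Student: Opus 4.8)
The plan is to combine Guichard's description of the Zariski closure of a Hitchin representation with Theorem C and Corollary \ref{cors:superficies2}, the only genuinely new ingredient being a short computation in the restricted root system. By Labourie, $\rho$ is hyperconvex; by Guichard's theorem (\ref{Zariski-Guichard}) the Zariski closure $G$ of $\rho(\pi_1\E)$ in $\PGL(d,\R)$ is, up to conjugation, either the principal $\PSL(2,\R)$ — equivalently $\rho=\tau_d\circ{\sf f}$ is Fuchsian — or one of $\PGL(d,\R)$, $\mathrm{PSp}(d,\R)$ ($d$ even), $\PSO(m,m+1)$ ($d=2m+1$), $\Ge\subset\PGL(7,\R)$. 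In each case $G$ is $\R$-split and semisimple, $\rho$ seen as a representation into $G$ is Zariski dense and hyperconvex (the Frenet flag curve $\z$ lands in $G/P$ and sends distinct points to pairs in general position, and the embedding $G/P\hookrightarrow\scr F$ is bi-Lipschitz so the H\"older exponent $\alpha_\rho$ is unaffected), and $\hX=1$ since $\pi_1\E$ is cocompact in $\H^2$. If $\rho$ is Fuchsian then $h_\rho=\h_\rho=2/(d-1)$ by Remark \ref{obs:hff}, while $\z$ is the real-analytic Frenet curve of the base structure, hence Lipschitz, so $\alpha_\rho=1$ and both estimates are equalities. From now on $\rho$ is not Fuchsian, so $G$ is simple of rank $\geq2$.

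\emph{The chamber barycenter.} In each of the cases above the principal $\SL_2$ of $G$, composed with $G\hookrightarrow\SL(d,\R)$, is the irreducible representation $\tau_d$ (Kostant); hence the image of $\diag(1,-1)$ under the principal $\SL_2$ of $G$ is $b:=\diag(d-1,d-3,\dots,1-d)\in\frak a$, which therefore spans $\sf{bar}_{\frak a^+}$ and satisfies $\a(b)=2$ for every simple restricted root $\a$ of $G$. Let $\varpi\in\frak a^*$ be the highest restricted weight of the standard $G$-module $\R^d$, so that $\varpi(\lambda(\rho\g))=\lambda_1(\rho\g)$ and $h_\varpi=h_\rho$; let $\psi\in\frak a^*$ be the restriction to $\frak a$ of the highest root of $\frak{sl}(d,\R)$, which is the highest restricted weight of the proximal $G$-module $V_\rho\subset\frak{sl}(d,\R)$ of Lemma \ref{lema:adjunta}, so that $\psi(\lambda(\rho\g))=\lambda_1(\rho\g)-\lambda_d(\rho\g)$ and $h_\psi=\tfrac12\h_\rho$. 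Both $\varpi$ and $\psi$ are positive on $\frak a^+-\{0\}$, and one computes $\varpi(b)=d-1$, $\psi(b)=2(d-1)$.

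\emph{Conclusion.} Applying Theorem C to $\rho\colon\pi_1\E\to G$ with the functional $\varpi$ and then with $\psi$ gives
$$\alpha_\rho h_\rho=\alpha_\rho h_\varpi\leq\hX\,\frac{\a(\sf{bar}_{\frak a^+})}{\varpi(\sf{bar}_{\frak a^+})}=\frac2{d-1},\qquad\alpha_\rho h_\psi\leq\hX\,\frac{\a(\sf{bar}_{\frak a^+})}{\psi(\sf{bar}_{\frak a^+})}=\frac1{d-1},$$
the second reading $\alpha_\rho\h_\rho\leq2/(d-1)$. Were equality to hold in the first estimate, Corollary \ref{cors:superficies2} with $\chi=\varpi$ (no positivity of $\b_\varpi$ is required since $\pi_1\E$ is cocompact in $\H^2$) would force $\PSL(2,\R)$ to be a factor of $G$; likewise equality in the second, via $\chi=\psi$, would force the same. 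Both are impossible because $G$ is simple of rank $\geq2$. Hence, for $\rho$ not Fuchsian, both inequalities are strict, which together with the Fuchsian case proves the Corollary.

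\emph{Main difficulty.} The substantive work is Lie-theoretic rather than analytic: checking that the Hitchin Frenet curve is hyperconvex for the Zariski closure $G$ (not merely for $\PGL(d,\R)$), identifying the module $V_\rho$ and its highest weight $\psi$ inside $\frak{sl}(d,\R)$, and — the crux — showing that $\sf{bar}_{\frak a^+}$ is the coweight of the principal $\SL_2$ of $G$, for which one invokes Kostant's theorem together with Guichard's classification. Once this is in place the evaluations $\varpi(b)=d-1$ and $\psi(b)=2(d-1)$ are immediate, and Theorem C together with Corollary \ref{cors:superficies2} finish the argument.
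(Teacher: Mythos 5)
Your argument is correct and follows the paper's strategy: Guichard's classification of the Zariski closure $G$, hyperconvexity of $\rho$ viewed as a representation into $G$, Theorem C for the upper bound, and Corollary \ref{cors:superficies2} to rule out equality when $G$ is simple of rank $\geq 2$. The one genuine divergence is in how the key numerical identity $\a(\sf{bar}_{\frak a^+})/\chi(\sf{bar}_{\frak a^+})=2/(d-1)$ is obtained: the paper verifies it case by case, writing down the simple roots, the barycenter and the highest weight of the standard module for each group on Guichard's list ($\tau_d(\SL(2,\R))$, $\SL(d,\R)$, $\Sp(2n,\R)$, $\SO(n,n+1)$, $\Ge$), whereas you give a uniform argument via Kostant's principal $\SL(2,\R)$ --- the semisimple element $b$ of the principal triple satisfies $\a(b)=2$ for all simple roots, hence spans the barycenter, and since the principal $\SL_2$ of each $G$ on the list maps to $\tau_d$ under the standard embedding one gets $\varpi(b)=d-1$ and $\psi(b)=2(d-1)$ at once. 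This buys brevity and a conceptual explanation of why the same ratio appears for every possible closure, at the cost of invoking the (standard, but not proved in the paper) facts about principal $\frak{sl}_2$-triples and their compatibility with the embeddings $G\hookrightarrow\SL(d,\R)$; the paper's explicit root-data computation is longer but entirely self-contained. Two further small differences, both to your credit: for the Hilbert entropy you route the rigidity step through the genuine restricted weight $\psi=\chi+\chi\circ\ii$ (the highest weight of the adjoint-type module $V_\rho$), so Corollary \ref{cors:superficies2} applies verbatim, where the paper works with the functional $(\chi+\chi\circ\ii)/2$ and declares the case ``completely analogous''; and you explicitly note the transfer of the H\"older exponent between $G/P$ and the full flag variety of $\R^d$ and verify the Fuchsian equality case, points the paper leaves implicit.
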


Remark that if $\z:\bord\pi_1\E\to\scr F$ is the equivariant map of a Hitchin representation then, by definition, it is less (or equally) regular than $\xi=\z_1:\bord\pi_1\E\to\P(\R^d).$ Hence, even though we obtain a much better bound on $\alpha_\rho h_\rho,$ we do not know if this is produced by a decay of regularity of the map $\z.$

\subsubsection*{Acknowledgments} The author is extremely thankful to Martin Bridgeman, Dick Canary, Francois Labourie, Alejandro Passeggi and Rafael Potrie for useful discussions. He would like to particularly thank Yves Benoist, Matias Carrasco and Jean-Fran\c cois Quint for discussions that considerably improved the statements of this work, and Qiongling Li for pointing out an error on the first version of this paper.


\section{Reparametrizations and Thermodynamic Formalism}

Let $X$ be a compact metric space and let $\phi=(\phi_t:X\to X)_{t\in\R}$ be a continuous flow on $X$ without fixed points. Consider a positive continuous function $f:X\to\R_+^*$ and define $\k:X\times\R\to\R$ by \begin{equation}\label{equation:k} \k(x,t)=\int_0^tf\phi_s(x)ds.\end{equation} 
The function $\k$ has the cocycle property $\k(x,t+s)=\k(\phi_t x,s)+\k(x,t)$ for every $t,s\in\R$ and $x\in X.$

Since $f>0$ and $X$ is compact, $f$ has a positive minimum and $\k(x,\cdot)$ is an increasing homeomorphism of $\R.$ We then have a map $\alpha:X\times\R\to\R$ such that \begin{equation}\label{equation:inversa} \alpha(x,\k(x,t))=\k(x,\alpha(x,t))=t,\end{equation} for every $(x,t)\in X\times\R.$

\begin{defi}\label{defi:repa}The \emph{reparametrization} of $\phi$ by $f$ is the flow $\psi=\psi^f=\{\psi_t:X\to X\}_{t\in\R}$ defined by $\psi_t(x)=\phi_{\alpha(x,t)}(x).$ If $f$ is H\"older-continuous we will say that $\psi$ is a H\"older reparametrization of $\phi.$
\end{defi}

A function $U:X\to\R$ is $\clase^1$ \emph{in the direction of the flow} $\phi,$ if for every $p\in X$ the function $t\mapsto U(\phi_t(p))$ is of class $\clase^1,$ and the function $$p\mapsto \left.\frac{\partial }{\partial t}\right|_{t=0}U(\phi_t(p))$$ is continuous. Two H\"older-continuous functions $f,g:X\to\R$ are \emph{Liv\v sic-co\-ho\-mo\-lo\-gous,} if there exists a continuous function $U:X\to\R,$ $\clase^1$ in the direction of the flow, such that for all $p\in X$ one has $$f(p)-g(p)=\left.\frac{\partial}{\partial t}\right|_{t=0} U(\phi_t(p)).$$

\begin{obs} If $f,g:X\to\R_+^*$ are continuous and Liv\v sic-cohomo\-lo\-gous, the reparametrization of $\phi$ by $f$ is conjugated to the reparametrization by $g,$ i.e. there exists a homeomorphism $h:X\to X$ such that for all $p\in X$ and $t\in\R$ $$h(\psi_t^fp) =\psi_t^g(hp).$$
\end{obs}

Let $\psi$ be the reparametrization of $\phi$ by $f:X\to\R^*_+.$ If $\tau$ is a periodic orbit of $\phi$ of period $p(\tau),$ then the period of $\tau$ for $\psi$ is \begin{equation}\label{equation:periodos}\int_\tau f=\int_0^{p(\tau)}f(\phi_s(x))ds,\end{equation} where $x\in\tau.$ If $m$ is a $\phi$-invariant probability measure on $X,$ the probability measure $m^\#$ defined by $$\frac{dm^\#}{dm}(\cdot)=f(\cdot)/\int fdm,$$ is $\psi$-invariant. This relation between invariant probability measures induces a bijection and Abramov \cite{abramov} relates the corresponding metric entropies: \begin{equation}\label{eq:abramov}h(\psi,m^\#)=h(\phi,m)/\int fdm.\end{equation}

Denote by $\cal M^{\phi}$ the set of $\phi$-invariant probability measures. The \emph{pressure} of a continuous function $f:X\to\R$ is defined by $$P(\phi,f)=\sup_{m\in\cal M^{\phi}}h(\phi,m)+\int_X fdm.$$ A probability $m$ such that the supremum is attained is called an \emph{equilibrium state} of $f.$ An equilibrium state for $f\equiv0$ is called a \emph{probability of maximal entropy}, its entropy is called the \emph{topological entropy} of $\phi$ and denoted $h_{\textrm{top}}(\phi).$

\begin{lema}[{\cite[Section 2]{quantitative}}]\label{lema:entropia2} Let $\psi$ be the reparametrization of $\phi$ by $f:X\to\R_+^*,$ and assume that $h_{\tope}(\psi)$ is finite. Then $m\mapsto m^\#$ induces a bijection between the set of equilibrium states of $-h_{\tope}(\psi)f$ and the set of probability measures of maximal entropy of $\psi.$
\end{lema}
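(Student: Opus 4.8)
The statement to establish is the following: for $\psi$ the reparametrization of $\phi$ by $f$, with $h_{\tope}(\psi)$ finite, the correspondence $m\mapsto m^\#$ restricts to a bijection between the equilibrium states of $-h_{\tope}(\psi)f$ (for the flow $\phi$) and the measures of maximal entropy of $\psi$. The plan is to use the Abramov formula (\ref{eq:abramov}) together with the variational principle for pressure, turning the maximal-entropy problem for $\psi$ into a pressure-zero problem for $\phi$.

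First I would recall that $m\mapsto m^\#$ is already known to be a bijection $\cal M^\phi\to\cal M^\psi$ (stated just before the lemma, via the density $dm^\#/dm=f/\int f\,dm$), with inverse $n\mapsto n^\#$ given by the reciprocal density. So the only thing to verify is that this bijection carries one distinguished subset onto the other. Write $h:=h_{\tope}(\psi)$. For any $m\in\cal M^\phi$, Abramov gives $h(\psi,m^\#)=h(\phi,m)/\int f\,dm$. Hence
\begin{equation}\label{eq:planchain}
h(\psi,m^\#)\le h \iff h(\phi,m)\le h\int f\,dm \iff h(\phi,m)-h\int f\,dm\le 0 \iff h(\phi,m)+\int(-hf)\,dm\le 0.
\end{equation}
Taking the supremum over $m\in\cal M^\phi$ (equivalently over $n=m^\#\in\cal M^\psi$, by the bijection) of the left-hand quantity gives $\sup_n h(\psi,n)=h_{\tope}(\psi)=h$, so the supremum of the right-hand quantity $h(\phi,m)+\int(-hf)\,dm$ over $m$ is $\le 0$; and it is $\ge 0$ because there exists an $m$ with $h(\psi,m^\#)$ arbitrarily close to $h$ (indeed $\psi$ has a measure of maximal entropy, its preimage being a candidate $m$). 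Therefore $P(\phi,-hf)=0$.

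Once $P(\phi,-hf)=0$ is in hand, the equivalence is immediate: $m$ is an equilibrium state of $-hf$ exactly when $h(\phi,m)+\int(-hf)\,dm=0$, which by the chain of equivalences (\ref{eq:planchain}) — now with equality throughout, using $P(\phi,-hf)=0$ — holds exactly when $h(\psi,m^\#)=h=h_{\tope}(\psi)$, i.e. exactly when $m^\#$ is a measure of maximal entropy for $\psi$. Since $m\mapsto m^\#$ is a bijection between $\cal M^\phi$ and $\cal M^\psi$, its restriction to these subsets is a bijection as claimed.

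The only delicate point is the existence of at least one measure of maximal entropy for $\psi$ (used to get $P(\phi,-hf)\ge 0$, equivalently to know the sup defining $h_{\tope}(\psi)$ is attained, or to know the equilibrium-state set is nonempty and the correspondence is not vacuously a bijection); strictly speaking for the \emph{bijection} statement one does not even need this, only the identity $P(\phi,-hf)=0$, which follows from the variational principle for $\psi$ via Abramov without any attainment hypothesis. I would therefore present the argument so that the main — and essentially only — input is the Abramov relation (\ref{eq:abramov}) combined with the definition of pressure and the standard variational principle $h_{\tope}(\psi)=\sup_{n\in\cal M^\psi}h(\psi,n)$; everything else is the formal manipulation (\ref{eq:planchain}). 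The mild subtlety to be careful about is that all quantities are finite (here is where $h_{\tope}(\psi)<\infty$ and $0<\min f\le f\le\max f<\infty$ enter), so that the inequalities can be rearranged freely and $\int f\,dm\in(0,\infty)$ uniformly.
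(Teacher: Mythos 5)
Your argument is correct: the identity $P(\phi,-h_{\tope}(\psi)f)=0$ obtained from Abramov's formula and the variational principle for $\psi$, followed by the equivalence between equality in the pressure bound and maximality of $h(\psi,m^\#)$, is exactly the standard proof, and it is the one in the cited source (\cite[Section 2]{quantitative}), which the paper invokes rather than reproduces. Your closing remark correctly defuses the only shaky step, the parenthetical claim that $\psi$ has a measure of maximal entropy, since the approximating-sequence version of the lower bound for the pressure suffices.
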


\subsection{Metric Anosov flows} 

We will now define \emph{metric Anosov flows}, the transfer of classical results from axiom A flows to this more general setting, is provided by Pollicott's work \cite{smaleflows}, and references therein. 

As before $\phi$ denotes a continuous flow on the compact metric space $X.$ For $\eps>0$ one defines the \emph{local stable set} of $x$ by $$W^s_\eps(x)=\{y\in X:d(\phi_tx,\phi_t y)\leq\eps\ \forall t>0\textrm{ and }d(\phi_tx,\phi_t y)\to0\textrm{ as }t\to\infty\}$$ and the local unstable set by $$W^u_\eps(x)=\{y\in X:d(\phi_{-t}x,\phi_{-t} y)\leq\eps\ \forall t>0\textrm{ and }d(\phi_{-t}x,\phi_{-t} y)\to0\textrm{ as }t\to\infty\}.$$

\begin{defi}\label{defi:anosovtop} We will say that $\phi$ is a \emph{metric Anosov flow} if the following holds: 
\begin{itemize} \item[-] There exist positive constants $C,$ $\lambda$ and $\eps$ such that for every $x\in X,$ every $y\in W^s_\eps(x)$ and every $t>0$ one has $$d(\phi_t(x),\phi_t(y))\leq Ce^{-\lambda t}$$ and such that for every $y\in W^u_\eps(x)$ one has $$d(\phi_{-t}(x),\phi_{-t}(y))\leq Ce^{-\lambda t}.$$
\item[-] There exists a continuous map $\nu:\{(x,y)\in X\times X:d(x,y)<\delta\}\to \R$ such that $\nu(x,y)$ is the unique value such that $W^u_\eps(\phi_\nu x)\cap W^s_\eps(y)$ is non empty, and consists of exactly one point.
\end{itemize}
\end{defi}




A flow is said to be \emph{transitive} if it has a dense orbit. Anosov's closing Lemma is a standard dynamical tool in hyperbolic dynamics, see Sigmund \cite{sigmund}.

\begin{teo}[Anosov's closing Lemma]\label{teo:anosovclosing} Let $\phi$ be transitive metric Anosov flow, then periodic orbits are dense in $\cal M^\phi.$
\end{teo}

The following is standard in the study of Ergodic Theory of Anosov flows.

\begin{prop}[Bowen-Ruelle \cite{bowenruelle}]\label{prop:ruellebowen} Let $\phi$ be a transitive metric Anosov flow. Then given a H\"older-continuous function $f:X\to\R$ there exists a unique equilibrium state for $f.$ If two functions have the same equilibrium state, their difference is Liv\v sic-cohomologous to a constant.
\end{prop}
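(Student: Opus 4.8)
The plan is to reduce everything to the discrete-time thermodynamic formalism on a subshift of finite type via a symbolic coding, exactly as in Bowen--Ruelle, and then transport the conclusions back to the flow. First I would build a finite Markov partition for $\phi|X$: using the local product structure in the second clause of Definition~\ref{defi:anosovtop} together with expansivity of $\phi$ (which follows from the uniform contraction/expansion estimates of the first clause, and which uses that $\phi$ has no fixed points), one obtains an irreducible subshift of finite type $(\Sigma,\sigma)$, a H\"older-continuous roof function $r:\Sigma\to\R_+^*$, and a H\"older-continuous, finite-to-one, surjective semiconjugacy $\pi$ from the suspension flow $\sigma^r$ onto $(X,\phi)$ that is injective off a set of zero measure for every invariant measure; transitivity of $\phi$ forces $\Sigma$ to be irreducible. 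Pushing measures forward and back along $\pi$ is an entropy- and integral-preserving affine bijection between the invariant probabilities of $\phi$ and those of $\sigma^r$, so it suffices to treat existence, uniqueness and ergodicity of equilibrium states on the suspension.

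Next I would pass from the suspension to its base. By the Abramov formula and the Bowen--Walters correspondence, the $\sigma^r$-equilibrium states of $f\circ\pi$ correspond bijectively, preserving entropy and (suitably normalised) measure, to the $\sigma$-equilibrium states of the H\"older potential on $\Sigma$ obtained by integrating $f\circ\pi$ along the roof and subtracting $P(\phi,f)\,r$; this normalisation is the analogue for the coding of Lemma~\ref{lema:entropia2}. For a H\"older potential on an irreducible subshift of finite type the Ruelle--Perron--Frobenius transfer operator has a simple maximal eigenvalue with strictly positive eigenfunction and eigenmeasure, and the associated Gibbs measure is the unique equilibrium state and is mixing, hence ergodic. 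Transporting this back through $\pi$ yields the first two assertions: $f$ has a unique equilibrium state on $X$, and it is ergodic. I expect the main obstacle to be the construction and H\"older control of the Markov coding in the merely topological-Anosov category --- in particular checking that $r$ is H\"older and that $\pi$ neither creates nor loses invariant measures of positive entropy --- which is precisely where the expansivity and the estimates of Definition~\ref{defi:anosovtop} enter.

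For the last assertion, suppose $f$ and $g$ are H\"older-continuous with a common equilibrium state $m$. Working in the symbolic model, the pressure function $s\mapsto P(\phi,g+s(f-g))$ is real-analytic and convex on $\R$, with derivative $\int_X(f-g)\,dm_s$, where $m_s$ is the unique equilibrium state of $g+s(f-g)$. Since $m_0$ is the equilibrium state of $g$ and $m_1$ that of $f$, both coincide with $m$, so the derivatives at $s=0$ and $s=1$ agree; convexity then forces the pressure to be affine on $[0,1]$, hence $P''\equiv 0$ there. Now $P''(0)$ equals the asymptotic variance of $f-g-c$ with respect to $m$, where $c:=\int_X(f-g)\,dm=P(\phi,f)-P(\phi,g)$; vanishing of this variance, by the standard central-limit dichotomy for H\"older functions over transitive hyperbolic flows, forces the orbit integrals $\int_\tau(f-g-c)$ to vanish over every periodic orbit $\tau$. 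Finally I would invoke the Liv\v sic theorem for transitive topological Anosov flows --- whose proof uses the closing lemma, Theorem~\ref{teo:anosovclosing}, to manufacture the transfer function and uses the flow direction to obtain its $\clase^1$ regularity --- to conclude that $f-g-c=\left.\frac{\partial}{\partial t}\right|_{t=0}U\circ\phi_t$ for some continuous $U$ of class $\clase^1$ in the direction of $\phi$; that is, $f-g$ is Liv\v sic-cohomologous to the constant $c$. If one prefers to bypass the variance computation, the periodic-orbit identity $\int_\tau f-\int_\tau g=c\,p(\tau)$ can instead be read off directly from the Gibbs property of $m$ with respect to both $f$ and $g$, after which the same Liv\v sic theorem applies verbatim.
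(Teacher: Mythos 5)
The paper does not prove this statement at all---it is quoted from Bowen--Ruelle (with the topological-Anosov version resting on the symbolic coding of such flows)---and your outline reconstructs exactly that standard argument: Markov coding, Abramov/Bowen--Walters reduction to a H\"older potential on an irreducible subshift, Ruelle--Perron--Frobenius for existence, uniqueness and ergodicity, and the Gibbs/pressure-derivative argument plus the Liv\v sic theorem for the cohomology statement. So your proposal is correct and follows essentially the same route as the cited proof.
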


We will need the following immediate lemma.

\begin{lema}\label{lema:funcpositiva} Let $\phi$ be a metric Anosov flow on $X$ and let $f:X\to\R_+^*$ be H\"older-continuous. Denote by $$h_f=\lim_{t\to\infty}\frac 1t\log\#\{\tau\textrm{ periodic}:\int_\tau f\leq t\},$$ then $$\frac{h(\phi,m_{-h_f f})}{h_f}=\int fdm_{-h_ff}.$$
\end{lema}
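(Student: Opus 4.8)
The plan is to rewrite the counting function $\#\{\tau : \int_\tau f \le t\}$ as a pressure-zero statement and then invoke the variational principle, exactly as in the classical situation for Anosov flows. First I would recall that since $f$ is positive, bounded and bounded away from $0$ on the compact space $X$, the reparametrization $\psi = \psi^f$ of $\phi$ by $f$ is a well-defined H\"older reparametrization (Definition \ref{defi:repa}), and that by equation (\ref{equation:periodos}) the period of a $\psi$-periodic orbit corresponding to the $\phi$-periodic orbit $\tau$ is precisely $\int_\tau f$. Hence the quantity $h_f$ in the statement is nothing but the exponential growth rate of periodic orbits of $\psi$, i.e. $h_f = h_{\tope}(\psi)$ — here one uses that $\psi$ is again a topological Anosov flow (it is orbit-equivalent to $\phi$, so it is transitive and the hyperbolicity estimates of Definition \ref{defi:anosovtop} persist up to changing constants), so that the growth rate of its periodic orbits equals its topological entropy. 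In particular $h_f$ is finite.

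Next I would apply the variational machinery. Since $h_{\tope}(\psi) = h_f < \infty$, Lemma \ref{lema:entropia2} applies: the map $m \mapsto m^\#$ is a bijection between the equilibrium states of the function $-h_f f$ (for the flow $\phi$) and the probability measures of maximal entropy of $\psi$. By Proposition \ref{prop:ruellebowen} applied to the H\"older function $-h_f f$, there is a unique such equilibrium state, which I denote $m_{-h_f f}$; correspondingly $\psi$ has a unique measure of maximal entropy, namely $(m_{-h_f f})^\#$, whose entropy is $h(\psi, (m_{-h_f f})^\#) = h_{\tope}(\psi) = h_f$. Now I invoke Abramov's formula (\ref{eq:abramov}):
$$
h\bigl(\psi, (m_{-h_f f})^\#\bigr) = \frac{h(\phi, m_{-h_f f})}{\int f\, dm_{-h_f f}}.
$$
Combining the two displayed facts gives $h_f = h(\phi, m_{-h_f f}) / \int f\, dm_{-h_f f}$, which rearranges to the asserted identity
$$
\frac{h(\phi, m_{-h_f f})}{h_f} = \int f\, dm_{-h_f f}.
$$

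I expect the only real point requiring care — and the one I would spell out — is the identification $h_f = h_{\tope}(\psi)$, i.e. that the exponential growth rate of $\psi$-periodic orbits coincides with the topological entropy of $\psi$. This is where one needs $\psi$ to be a transitive topological Anosov flow so that the orbit-counting/entropy relation (Bowen's theorem, or the consequence of Proposition \ref{prop:ruellebowen} together with the equidistribution of periodic orbits toward the measure of maximal entropy) holds; everything else is a direct substitution into Lemma \ref{lema:entropia2}, Proposition \ref{prop:ruellebowen} and the Abramov relation (\ref{eq:abramov}). Since the excerpt already treats reparametrizations of topological Anosov flows in exactly this generality, this step can be stated briefly.
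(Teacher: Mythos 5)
Your proposal is correct and follows essentially the same route as the paper: reparametrize $\phi$ by $f$, observe via equation (\ref{equation:periodos}) that the reparametrized flow is again a topological Anosov flow whose topological entropy equals the growth rate $h_f$ of its periodic orbits, and then conclude by Lemma \ref{lema:entropia2} together with Abramov's formula (\ref{eq:abramov}). The extra care you devote to the identification $h_f=h_{\tope}(\psi)$ is exactly the point the paper also relies on, so there is nothing to add.
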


\begin{proof} Let $\psi$ be the reparametrization of $\phi$ by $f.$ The flow $\psi$ is still a metric Anosov flow and hence its topological entropy is the exponential growth rate of its periodic orbits, i.e. the metric entropy of $\psi$ is $h_f$ (recall equation (\ref{equation:periodos})). The proof is completed by applying Lemma \ref{lema:entropia2} and Abramov's formula (\ref{eq:abramov}).
\end{proof}


\section{$\CAT(-1)$ spaces}

The standard reference for this section is Bourdon \cite{bourdon}. Consider a $\CAT(-1)$ space $X,$ and $\bord X$ its visual boundary. The \emph{Busseman function} of $X,$ $B:\bord X\times\ X\times X\to\R,$ is defined by $$B(z,p,q)= B_z(p,q)= \lim_{s\to\infty} d_X(p,\sigma(s))-d_X(q,\sigma(s)),$$ where $\sigma:[0,\infty)\to X$ is any geodesic ray such that $\sigma(\infty)=z.$

Denote by $$\bord^{(2)}X=\bord X\times\bord X-\{(x,x):x\in\bord X\},$$ and fix a point $o\in X.$ The \emph{Gromov product} of $X$ based on $o,$ $[\cdot,\cdot]_o:\bord^{(2)} X\to\R,$ is defined by $$[x,y]_o=\frac12(B_x(o,p)+B_y(o,p))$$ where $p$ is any point in the geodesic joining $x$ and $y.$ Remark that $[x,y]_o\to\infty$ as $y$ approaches $x.$ The \emph{visual metric} on $\bord X$ based on $o,$ is defined by $\d_o(x,y)=e^{-[x,y]_o}.$ Since $X$ is $\CAT(-1)$ this is in fact a distance on $\bord X.$

For $\g\in\isom X,$ denote by $|\g|$ its translation length, $|\g|=\inf_{p\in X}d_X(p,\g p).$ If $\g$ is hyperbolic then one has $$|\g|=B_{\g_+}(\g^{-1}o,o),$$ for any $o\in X,$ where $\g_+$ is the attractor of $\g$ on $\bord X.$


\begin{lema}\label{lema:lambda2hyp} Consider a hyperbolic element $\g\in\isom X,$ then for any $x\in\bord X-\{\g_-\}$ one has $$\lim_{n\to\infty}\frac{\log
\d_o(\g^nx,\g_+)}n=-|\g|.$$
\end{lema}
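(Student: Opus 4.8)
The plan is to compute the Gromov product $[\g^n x, \g_+]_o$ directly from the Busemann cocycle and track its growth. Recall $\d_o(\g^n x, \g_+) = e^{-[\g^n x, \g_+]_o}$, so the statement is equivalent to $\lim_{n\to\infty} \tfrac1n [\g^n x, \g_+]_o = |\g|$. First I would use equivariance of the Busemann function: for any isometry $\g$, $B_{\g z}(\g p, \g q) = B_z(p,q)$, hence $[\g w, \g w']_{\g o} = [w,w']_o$ for all $w,w'\in\bord X$. Applying this with $w = \g^{n-1}x$, $w' = \g_-$... actually more directly, $[\g^n x, \g_+]_o = [\g^n x, \g\g_+]_o$ since $\g\g_+ = \g_+$, and I can shift: $[\g^n x, \g_+]_o = [\g^{n} x, \g_+]_o$ while $[\g^{n}x,\g_+]_{\g o} = [\g^{n-1}x, \g_+]_o$. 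Writing $[\cdot,\cdot]_{\g o}$ in terms of $[\cdot,\cdot]_o$ via the Busemann functions then produces a telescoping relation whose increments are governed by $B_{\g_+}(o,\g o)$, which for a hyperbolic element equals $-|\g|$ (or $|\g|$, depending on orientation conventions — this sign bookkeeping is where care is needed).

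Concretely, the key identity I would establish is: for a fixed point $p$ on the geodesic from $\g^n x$ to $\g_+$ (or more robustly, using the formula $[w,w']_o = \tfrac12(B_w(o,p) + B_{w'}(o,p))$ and the cocycle property $B_z(o,q) = B_z(o,p) + B_z(p,q)$), one gets
\[
[\g^n x, \g_+]_{\g o} = [\g^n x, \g_+]_o - \tfrac12\big(B_{\g^n x}(o,\g o) + B_{\g_+}(o,\g o)\big).
\]
Combined with $[\g^n x, \g_+]_{\g o} = [\g^{n-1}x, \g_+]_o$ (equivariance plus $\g\g_+=\g_+$), this gives a recursion
\[
[\g^n x, \g_+]_o = [\g^{n-1}x, \g_+]_o + \tfrac12\big(B_{\g^n x}(o,\g o) + B_{\g_+}(o,\g o)\big).
\]
Iterating, $[\g^n x, \g_+]_o = [x,\g_+]_o + \tfrac12 \sum_{j=1}^n B_{\g^j x}(o,\g o) + \tfrac n2 B_{\g_+}(o,\g o)$. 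Now $B_{\g_+}(o,\g o) = -B_{\g_+}(\g o, o) = -|\g|$ with the sign convention that $|\g| = B_{\g_+}(\g^{-1}o,o)$ pushed forward appropriately; and since $\g^j x \to \g_+$ as $j\to\infty$ (because $x\neq\g_-$), continuity of the Busemann function gives $B_{\g^j x}(o,\g o) \to B_{\g_+}(o,\g o) = -|\g|$. Hence the Cesàro average $\tfrac1n\sum_{j=1}^n B_{\g^j x}(o,\g o) \to -|\g|$ as well, and dividing the displayed formula by $n$ and letting $n\to\infty$ yields $\tfrac1n[\g^n x,\g_+]_o \to -\tfrac12|\g| - \tfrac12|\g| = -|\g|$; wait — sign: one wants the limit of $\tfrac1n\log\d_o(\g^n x,\g_+) = -\tfrac1n[\g^n x,\g_+]_o \to -|\g|$, so $\tfrac1n[\g^n x,\g_+]_o \to |\g|$, which forces the Busemann increments to be $+|\g|$ — I would fix the orientation of the geodesic ray defining $B$ accordingly.

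The main obstacle is purely the sign/orientation bookkeeping: making consistent choices for the direction of geodesic rays in the definition of $B_z$, for which endpoint ($\g_+$ vs.\ $\g_-$) the iterates converge to, and for the formula $|\g| = B_{\g_+}(\g^{-1}o,o)$ quoted just above the Lemma. Once these are pinned down, everything reduces to the elementary facts that $\g^j x \to \g_+$ for $x\neq\g_-$ (standard North–South dynamics on $\bord X$ for hyperbolic isometries of a $\CAT(-1)$ space) and that Cesàro averages of a convergent sequence converge to the same limit. No curvature estimate beyond the existence and continuity of the visual metric and Busemann functions is needed, so the proof is short.
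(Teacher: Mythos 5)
Your proposal is correct and is essentially the paper's argument in additive form: the paper uses the multiplicative transformation rule $\d_o(\g z,\g x)=e^{\frac12(B_{\g z}(\g o,o)+B_{\g x}(\g o,o))}\d_o(z,x)$ and iterates it with a $\delta$-estimate near $\g_+$, which is exactly the exponential of your Gromov-product recursion, with your Ces\`aro step playing the role of the paper's $(1\pm\delta)^n$ bound. The sign issue you flag resolves itself with the paper's conventions, since by equivariance $B_{\g_+}(o,\g o)=B_{\g_+}(\g^{-1}o,o)=|\g|$, so no re-orientation of the ray is needed.
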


\begin{proof} This is standard (Yue \cite{yue}). Fix two points $x,z\in\bord X,$ then for every $\g\in\isom X,$ one has $$\d_o(\g z,\g x)=e^{\frac{1}2 (B_{\g z}(\g o,o)+B_{\g x}(\g o,o))} \d_o(z,x).$$

Hence, for a given $\eps$ there exists a neighborhood $V$ of $z,$ such that for every $x\in V$ one has  $$1-\eps\leq \frac{\d_0(\g z,\g x)}{\d_o(z,x)} e^{-B_{\g z}(\g o,o)}\leq 1+\eps.$$

Assume now that $\g$ is hyperbolic, consider $z=\g_+,$ and choose $V$ with the additional property $\g V\subset V.$ Fix $\eps>0$ and assume that $x\in V,$ then one has $$(1-\eps)^n\leq \frac{\d_o(\g_+,\g^n x)}{\d_o(\g_+,x)} e^{-nB_{\g_+}(\g o,o)}\leq (1+\eps)^n.$$ Taking logarithm and dividing by $n$ one obtains the desired conclusion. If $x\notin V,$ then a big enough power $\g^N x$ does lie in $V$ (recall $x\neq\g_-$), and one repeats the argument.
\end{proof}

For a discrete subgroup $\G$ of $\isom X,$ denote by $\Lim_\G$ its limit set on $\bord X.$  Consider the space $\w{\UG}$ defined by $$\{\sigma:(-\infty,\infty)\to X: \text{$\sigma$ is a complete geodesic with $\sigma(-\infty),\sigma(\infty)\in\Lim_\G$}\}.$$ 

The group $\G$ naturally acts on $\w{\UG}$ and we denote by $\UG = \G\/\w{\UG}$ its quotient. We will say that $\G$ is \emph{convex cocompact} if the space $\UG$ is compact. 

\begin{obs} Throughout this work we will fix a convex cocompact action of $\G$ on $X,$ hence we allow ourselves to naturally identify $\Lim_\G$ to $\bord\G,$ and to refer to the space $\UG$ as only depending on $\G.$
\end{obs}

The space $\UG$ is naturally equipped with a flow $\phi=\{\phi_t:\UG\to\UG\}_{t\in\R}$ simply by changing the parametrization of a given complete geodesic. This is called \emph{the geodesic flow of $\G$}

The following theorem relates this section to the preceding one:

\begin{teo}[c.f. Bourdon \cite{bourdon}]\label{teo:flujo} Let $\G$ be a convex cocompact group of $\isom X.$ Then the geodesic flow of $\G$ is a metric Anosov flow. The topological entropy of the geodesic flow is hence $$\hX=\lim_{t\to\infty}\frac1t\log\#\{[\g]\in[\G]\textrm{ non-torsion}:|\g|\leq t\}.$$
\end{teo}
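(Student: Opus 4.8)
The plan is to verify the two assertions separately: first that $\phi$ is a topological Anosov flow in the sense of Definition \ref{defi:anosovtop}, and then that its topological entropy equals the stated counting limit — the identification of this limit with the Hausdorff dimension $\hX$ being Sullivan's theorem \cite{sullivan} (see also Bourdon \cite{bourdon}).

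For the Anosov property I would identify $\w{\UG}$ with an open subset of $\Lim_\G^{(2)}\times\R$ via $\sigma\mapsto(\sigma(-\infty),\sigma(+\infty),s)$, where $s$ records the position of $\sigma(0)$ relative to a fixed reference (e.g. a horosphere or the nearest point to $o$), chosen so that the flow acts by translation in the $\R$-coordinate. Under this identification the local stable set $W^s_\eps(\sigma)$ consists of nearby geodesics sharing the forward endpoint $\sigma(+\infty)$ (with parametrizations synchronized along the Busemann horospheres at that endpoint), and $W^u_\eps(\sigma)$ of nearby geodesics sharing the backward endpoint. The crucial input is the $\CAT(-1)$ comparison estimate: two geodesic rays with the same point at infinity and with base points within $\eps$ satisfy $d(\phi_t\sigma,\phi_t\sigma')\leq C\eps e^{-t}$ for $t\geq 0$, which one gets from comparison with $\H^2$, where the bound is explicit; running this backwards gives the unstable contraction. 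The local product structure — that a geodesic near $\sigma$ is uniquely and continuously determined by a small perturbation of each of its two endpoints together with a small time shift — holds because in a proper $\CAT(-1)$ space any two distinct boundary points are joined by a geodesic, unique up to parametrization, depending continuously on the endpoints. Finally, compactness of $\UG$ (convex co-compactness of $\G$) makes all constants uniform, yielding Definition \ref{defi:anosovtop}.

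For the entropy, I would first note that $\phi|\UG$ is transitive: the pairs $(\g_-,\g_+)$ with $\g\in\G$ hyperbolic are dense in $\Lim_\G^{(2)}$ (a standard fact for non-elementary discrete isometry groups of $\CAT(-1)$ spaces, $\Lim_\G$ being the closure of attracting fixed points), so $\phi$ has dense orbits. By the classical fact that for a transitive topological Anosov flow the topological entropy equals the exponential growth rate of the periods of its periodic orbits (Bowen's specification machinery, cf. Bowen–Ruelle \cite{bowenruelle} and Anosov's closing Lemma \ref{teo:anosovclosing}), together with compactness ensuring finiteness, we obtain
$$h_{\tope}(\phi)=\lim_{s\to\infty}\frac{\log\#\{\tau\ \phi\text{-periodic}:p(\tau)\leq s\}}{s}.$$
A periodic orbit lifts to a complete geodesic $\sigma\in\w{\UG}$ stabilized by some non-torsion $\g\in\G$ acting on $\sigma$ as translation by $|\g|$, and $[\g]\mapsto\G\sigma$ is a bijection between non-torsion conjugacy classes and periodic orbits with period $|\g|$; substituting gives the displayed formula. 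Sullivan's theorem \cite{sullivan} then identifies this growth rate with $\hX$.

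The main obstacle I expect is the careful bookkeeping in the $\CAT(-1)$ contraction estimates with constants uniform over all of $\UG$ — in particular synchronizing the "same endpoint" convergence of rays with the flow parametrization near $t=0$, and checking that the local product map is a homeomorphism onto a neighborhood. The last step (growth rate of closed geodesics $=\hX$) is quoted from Sullivan–Bourdon, whose proof — a covering argument relating the Bowen–Margulis measure to a Patterson–Sullivan conformal density — is substantial but orthogonal to the methods of this paper.
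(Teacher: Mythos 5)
The paper offers no proof of this statement---it is quoted from Bourdon and Sullivan---and your sketch follows the same standard route those references take (Hopf parametrization of $\w{\UG}$ plus $\CAT(-1)$ comparison for the Anosov property and local product structure, Bowen's periodic-orbit counting for transitive topological Anosov flows, and Sullivan's identification of the growth rate with $\hX$), so your outline is correct and consistent with the paper's treatment. One small caveat: when $\G$ has torsion the map from non-torsion conjugacy classes to periodic orbits is not literally a bijection (powers of a primitive element, and elements differing by elliptic isometries stabilizing the same axis, give bounded multiplicities), but this does not change the exponential growth rate, so the displayed formula is unaffected.
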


\begin{prop}[c.f. Bourdon \cite{bourdon}] Consider a convex cocompact group $\G$ of $\isom X$ and $\rho:\G\to\isom Y$ a convex cocompact action on a $\CAT(-1)$ space $Y.$ Then there exists a H\"older-continuous equivariant map $\xi:\Lim_\G\to\Lim_{\rho\G}.$
\end{prop}

The regularity of the equivariant map is directly related to the ratios of the periods:

\begin{lema}\label{lema:alpha} Consider a convex cocompact group $\G$ of $X$ and $\rho:\G\to\isom Y$ a convex cocompact action on a $\CAT(-1)$ space $Y.$ Then for every non torsion $\g\in\G,$ one has $$\alpha\leq\frac{|\rho\g|}{|\g|},$$ where $\xi$ is $\alpha$-H\"older. 
\end{lema}

\begin{proof}
Consider a non torsion $\g\in\G.$ Lemma \ref{lema:lambda2hyp} states that, for any $x\in\bord X-\{\g_-\},$ one has $$ |\rho\g|=\lim_{n\to\infty}\frac{\log d(\rho\g^n(\xi x),(\rho\g)_+)}n=\lim_{n\to\infty}\frac{\log d(\xi(\g^nx),\xi(\g_+))}n,$$ since $\xi$ is equivariant. H\"older continuity of $\xi$ implies that the last quantity is bounded above by $$ \lim_{n\to\infty}\frac{\log K\d_o(\g^n x,\g_+)^\alpha}n=-\alpha|\g|,$$ again using Lemma \ref{lema:lambda2hyp}. Thus, for every non torsion $\g\in\G,$ one has $$\alpha \leq \frac{|\rho\g|}{|\g|}.$$ This finishes the proof.
\end{proof}

\subsection{H\"older cocycles} We will now focus on H\"older cocycles on $\bord\G.$ The main references for this subsection are Ledrappier \cite{ledrappier} and \cite[Section 5]{quantitative}.

\begin{defi}\label{defi:cociclo}A \emph{H\"older cocycle} is a function $c:\G\times\bord\G\to\R$ such that $$c(\g_0\g_1,x)=c(\g_0,\g_1x)+c(\g_1,x)$$ for any $\g_0,\g_1\in\G$ and $x\in\bord\G,$ and where $c(\g,\cdot)$ is a H\"older map for every $\g\in\G$ (the same exponent is assumed for every $\g\in\G$). 
\end{defi}

Given a H\"older cocycle $c$ and $\g\in\G-\{e\},$ the \emph{period} of $\g$ for $c$ is defined by $$\l_c(\g)=c(\g,\g_+),$$ where $\g_+$ is the attractive fixed point of $\g$ on $\bord\G.$ The cocycle property implies that the period of $\g,$ only depends on its conjugacy class $[\g]\in[\G].$

Two H\"older cocycles $c,c':\G\times\bord\G\to\R$ are \emph{cohomologous}, if there exists a H\"older-continuous function $U:\bord\G\to\R,$ such that for all $\g\in\G$ one has $$c(\g,x)-c'(\g,x)=U(\g x)-U(x).$$ One easily deduces from the definition that the set of periods $\{\l_c(\g):\g\in\G\textrm{ non torsion}\},$ of a H\"older cocycle, is a cohomological invariant.

\begin{teo}[Ledrappier \cite{ledrappier}]\label{teo:ledrappier} Two H\"older cocycles are cohomologous if and only if their periods coincide for every non-torsion $\g\in\G.$ For a given H\"older cocycle c there exists a H\"older-continuous function $f_c:\UG\to\R,$ such that for every non-torsion $[\g]$ one has $$\int_{[\g]}f_c=\l_c(\g).$$ If $c$ is cohomologous to $c'$ then $f_c$ is Liv\v sic-cohomologous to $f_{c'}.$
\end{teo}

We are interested in cocycles whose periods are non-negative, i.e. such that $\l_c(\g)\geq0$ for every non torsion $\g\in\G.$ The \emph{entropy}\footnote[2]{In \cite{quantitative} this is called the exponential growth rate of the cocycle.} of such cocycle is defined by $$h_c=\limsup_{t\to\infty}\frac 1t\log\#\{[\g]\in[\G]\textrm{ non-torsion}:\l_c(\g)\leq t\}\in\R_+\cup\{\infty\}.$$

The Busseman function induces a H\"older cocycle on $\bord\G$ as follows. Fix a point $o\in X,$ consider $\xi:\bord\G\to\Lim_\G$ the equivariant map, and define $\bus_\G:\G\times\bord\G\to\R$ by $$\bus_\G(\g,x)=B_{\xi(x)}(\g^{-1}o,o).$$ The period $\bus_\G(\g,\g_+)=|\g|,$ is the length of the closed geodesic associated to $\g,$ and the entropy of $\bus_\G$ is $\hX.$

\begin{lema}[{\cite[Section 3]{quantitative}}]\label{lema:cocicloss} Let $c$ be a H\"older cocycle with $h_c\in(0,\infty),$ then $f_c$ is Liv\v sic-cohomologous to a positive function. 
\end{lema}

\begin{lema}\label{lema:intersection} Consider a H\"older cocycle $c$ with finite and positive entropy. Then there exists a positive number $\LL(c),$ and a sequence $\g_n\to\infty$ in $\G,$ such that $$\frac{\ell_c(\g_n)}{|\g_n|}\to\LL(c)\leq\frac{\hX}{h_c},$$ as $n\to\infty.$ Moreover, if $\LL(c)=\hX/h_c,$ then there exists a constant $\kappa>0,$ such that $c$ and $\kappa\bus_\G$ are cohomologous.
\end{lema}

In the language of \cite{presion}, one has $\LL(c)\II(f_c,1)=1,$ and the lemma is direct consequence of \cite[Proposition 7.7]{presion}. Nevertheless, we give a proof for completeness.

\begin{proof}Applying Lemma \ref{lema:cocicloss}, there exists a positive, H\"older-continuous function $f_c:\UG\to\R_+^*,$ such that, for every non torsion conjugacy class $[\g]$ of $[\G],$ one has $$\int_{[\g]}f_c= \l_c(\g).$$ Denote by $m_{-h_c f_c}$ the equilibrium state of $-h_c f_c,$ and consider a sequence of periodic orbits $\{[\g_n]\},$ such that $$\frac{\Leb_{\g_n}}{|\g_n|}\to m_{-h_c f_c},$$ as $n\to\infty.$ The existence of this sequence is guaranteed by Anosov's closing Lemma \ref{teo:anosovclosing}. Thus, $$\frac{\l_c(\g_n)}{|\g_n|}=\frac1{|\g_n|}\int_{[\g_n]} f\to \int f dm_{-h_c f_c}$$ which, using Lemma \ref{lema:funcpositiva}, is equal to  $$\frac{h(\phi,m_{-h_c f_c})}{h_c}.$$ Define $\LL(c)=h(\phi,m_{-h_c f_c})/h_c.$

Recall that $\hX$ is the maximal entropy of $\phi,$ hence $\LL(c)\leq \hX/h_c,$ and the equality $\LL(c)=\hX/h_c$ implies that $m_{-h_\rho f_c}$ is the measure of maximal entropy of $\phi.$ Thus, Proposition \ref{prop:ruellebowen} implies that the function $f_c$ is Liv\v sic-cohomologous to a constant and the proof is completed.
\end{proof}

If $\rho:\G\to\isom(Y)$ is a convex cocompact action on a $\CAT(-1)$ space $Y,$ denote by $$\alpha_\rho=\sup\{\alpha\in\R_+^*:\textrm{the equivariant map $\xi:\Lim_\G\to\Lim_{\rho\G}$ is $\alpha$-H\"older}\}.$$ We can now prove the following proposition stated in the Introduction, this is a simpler version of the arguments for Theorem A.

\begin{prop} Consider a convex cocompact group $\G$ of $X$ and consider a convex cocompact action $\rho:\G\to\isom(Y),$ where $Y$ is $\CAT(-1),$ such that $\alpha_{\rho} h_{\rho}=\hX.$ Then the H\"older cocycles $\bus_{\rho\G}$ and $\alpha_\rho\bus_\G$ are cohomologous.
\end{prop}

\begin{proof} Recall that $h_{\rho}$ is the entropy of the H\"older cocycle $\bus_{\rho\G},$ hence $h_{\rho}\in(0,\infty).$ Applying Lemma \ref{lema:intersection} to the cocycle $\bus_{\rho\G}$ one obtains a sequence $\{\g_n\}$ in $\G$ such that $$\frac{\ell_c(\g_n)}{|\g_n|}\to\LL(\bus_{\rho\G})\leq\frac{\hX}{h_\rho}.$$ Using Lemma \ref{lema:alpha} one has $$\alpha_\rho\leq\frac{|\rho\g_n|}{|\g_n|}\leq \LL(\bus_{\rho\G})(1+\eps)\leq\frac{\hX}{h_\rho}(1+\eps),$$ for a given $\eps>0$ and big enough $n.$ The equality $\alpha_{\rho} h_{\rho}=\hX$ implies $\LL(\bus_{\rho\G})=\hX/h_\rho$ and hence there exists $\kappa$ such that $\bus_{\rho\G}$ and $\kappa\bus_\G$ are cohomologous. Again $\alpha_{\rho} h_{\rho}=\hX$ implies $\kappa=\alpha_{\rho}.$
\end{proof}

\section{Convex representations}\label{section:convexas}

Let $\G$ be a convex cocompact isometry group of a $\CAT(-1)$ space.

\begin{defi}A representation $\rho:\G\to\PGL(d,\R)$ is \emph{convex} if there exists a $\rho$-equivariant H\"older-continuous map $$(\xi,\xi^*):\bord\G\to\P(\R^d) \times\P((\R^d)^*),$$ such that $\R^d=\xi(x)\oplus\ker\xi^*(y)$ whenever $x\neq y.$
\end{defi}

\begin{lema}\label{lema:irreducible} Let $\rho:\G\to\PGL(d,\R)$ be a convex representation, then the action of $\rho\G$ on $\<\xi(\bord\G)\>$ is irreducible.
\end{lema}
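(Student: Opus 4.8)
The plan is to show that if $W\subseteq\langle\xi(\bord\G)\rangle$ is a nonzero $\rho\G$-invariant subspace, then $W=\langle\xi(\bord\G)\rangle$. Let $V=\langle\xi(\bord\G)\rangle$; by definition of $V$ it suffices to prove that $\xi(x)\subseteq W$ for every $x\in\bord\G$. The key structural input is that, because $\G$ is convex co-compact (hence non-elementary hyperbolic), the limit set $\bord\G=\Lim_\G$ is infinite, the set of attracting fixed points $\{\g_+:\g\in\G\text{ non-torsion}\}$ is dense in $\bord\G$, and for such $\g$ the line $\xi(\g_+)$ is attracting for the action of $\rho\g$ on $V$ in the following sense coming from the convexity hypothesis: writing the transversality $\xi(\g_+)\oplus\ker\xi^*(\g_-)=\R^d$, the equivariance $\xi(\g_+)=\rho\g\,\xi(\g_+)$ and $\xi^*(\g_-)=\rho\g\,\xi^*(\g_-)$ forces $\xi(\g_+)$ to be an eigenline of $\rho\g$ whose eigenvalue strictly dominates (in modulus) the action on $\ker\xi^*(\g_-)\cap V$. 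This is the standard proximality argument and it is exactly the mechanism behind Proposition~\ref{prop:finito}; I would invoke it (or reprove it in two lines using that $\xi$ is continuous, equivariant, and transverse to $\xi^*$, so that $\rho\g^n v/\|\rho\g^n v\|\to$ a representative of $\xi(\g_+)$ for every $v\notin\ker\xi^*(\g_-)$).

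First I would fix one non-torsion $\g$. Since $W\neq 0$, pick $0\neq w\in W$. If $w\notin\ker\xi^*(\g_-)$, then $\rho\g^n w/\|\rho\g^n w\|$ converges to a vector spanning $\xi(\g_+)$, and $W$ being closed and $\rho\G$-invariant gives $\xi(\g_+)\subseteq W$. To handle the case $w\in\ker\xi^*(\g_-)$, I would use that $\G$ is non-elementary: choose a non-torsion $\delta\in\G$ with $\{\delta_+,\delta_-\}\cap\{\g_+,\g_-\}=\vacio$ (a Schottky-type argument, or simply the density of fixed points together with transversality); applying the same proximality statement to $\delta$, we get that $\rho\delta^n w/\|\rho\delta^n w\|\to$ a representative of $\xi(\delta_+)$ unless $w\in\ker\xi^*(\delta_-)$. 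Since the various $\ker\xi^*(\delta_-)$ are proper subspaces of $V$ and there are infinitely many transverse directions available, $w$ cannot lie in all of them, so we conclude $\xi(\eta_+)\subseteq W$ for at least one non-torsion $\eta\in\G$.

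Having one point $\xi(\eta_+)\in W$, invariance under $\rho\G$ gives $\rho\g\,\xi(\eta_+)=\xi(\g\eta_+)\in W$ for all $\g\in\G$, i.e. $\xi(\G\cdot\eta_+)\subseteq W$. Now $\G\cdot\eta_+$ is a $\G$-orbit of a point of $\bord\G$, hence is dense in the limit set $\bord\G$ (minimality of the action of a convex co-compact group on its limit set), and $\xi$ is continuous, so $\xi(\bord\G)=\xi(\overline{\G\cdot\eta_+})\subseteq\overline{W}=W$. Therefore $V=\langle\xi(\bord\G)\rangle\subseteq W$, and $W=V$, proving irreducibility.

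The main obstacle is the bookkeeping in the second step: ruling out that the chosen vector $w$ lies simultaneously in $\ker\xi^*(\g_-)$ for every non-torsion $\g$ whose attracting point we might want to reach. I expect the cleanest route is to observe that the transversality condition $\xi(x)\oplus\ker\xi^*(y)=\R^d$ for $x\neq y$ implies $\bigcap_{y\in\bord\G}\ker\xi^*(y)=\{0\}$ (take any $x$; $\xi(x)\not\subseteq\ker\xi^*(y)$ for $y\neq x$, and a one-dimensional space either lies in or meets trivially, plus a continuity/compactness argument handles $y=x$), so that for a given $w\neq0$ there is some $y_0$ with $w\notin\ker\xi^*(y_0)$; then density of $\{\g_-\}$ and openness of the transversality condition yield a non-torsion $\g$ with $\g_-$ close enough to $y_0$ that $w\notin\ker\xi^*(\g_-)$, and the proximality argument applies directly to $\g$. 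Everything else is a routine combination of proximality, equivariance, density of the orbit, and continuity of $\xi$.
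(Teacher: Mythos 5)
Your closing step (one attracting line $\xi(\eta_+)$ in $W$, then density of the orbit $\G\cdot\eta_+$ and continuity of $\xi$) is exactly the paper's, but there is a genuine gap in the middle: the existence of a non-torsion $\g$ with $w\notin\ker\xi^*(\g_-)$ is never actually established. Your first argument --- that $w$ cannot lie in all of the infinitely many proper subspaces $\ker\xi^*(\delta_-)\cap V$ --- is a non sequitur: a nonzero vector lies in infinitely many hyperplanes. Your fallback claim that $\bigcap_{y\in\bord\G}\ker\xi^*(y)=\{0\}$ is false for a general convex representation: already for $\rho_0\oplus(\textrm{a one-dimensional block})$ acting on $\R^d\oplus\R,$ with $\xi=\xi_0$ unchanged and $\ker\xi^*(y)=\ker\xi_0^*(y)\oplus\R,$ equivariance, H\"older continuity and transversality all persist while the intersection contains the extra line; and your sketched proof only shows that no line $\xi(x)$ lies in the intersection, not that the intersection is trivial. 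What you actually need is $\<\xi(\bord\G)\>\cap\bigcap_y\ker\xi^*(y)=\{0\}.$ But that intersection is itself a $\rho\G$-invariant subspace of $\<\xi(\bord\G)\>$ containing no $\xi(x),$ so proving it vanishes is essentially the irreducibility statement you are trying to prove, and your proximality mechanism gives no traction on it, since any such $w$ lies in \emph{every} ``repelling hyperplane''.

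A secondary problem is the proximality input itself: it is not ``forced by equivariance'', and invoking Proposition \ref{prop:finito} or the machinery of Lemma \ref{lema:loxodromic} is circular, since those statements are for \emph{irreducible} convex representations --- irreducibility of the restriction to $\<\xi(\bord\G)\>$ is precisely what this lemma supplies. Moreover the ``two-line'' version you state, namely that $\rho\g^nv/\|\rho\g^nv\|$ converges to a representative of $\xi(\g_+)$ for every $v\in\R^d\setminus\ker\xi^*(\g_-),$ is false without irreducibility (in the block example a character on the extra line can dominate the top eigenvalue of $\rho_0\g$). The version restricted to $v\in\<\xi(\bord\G)\>$ is true, but proving it already requires the paper's key device: write vectors of $\<\xi(\bord\G)\>$ as finite combinations $w=\sum_i\alpha_iv_i$ with $v_i\in\xi(x_i)$ and exploit the north--south dynamics $\g^nx_i\to\g_+$ on $\bord\G$ together with equivariance and continuity of $\xi.$ This is how the paper argues: it chooses $\g$ with $\g_-\notin\{x_1,\ldots,x_k\},$ a condition on the decomposition rather than a spectral condition on $w,$ concludes $\R\,\rho\g^n(w)\to\xi(\g_+),$ and then finishes with the orbit-density step you also use; it therefore never needs the statement on which your argument gets stuck.
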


\begin{proof} Consider $W\subset\<\xi(\bord\G)\>$ a $\rho\G$-invariant subspace. Consider $w\in W$ and write $$w=\sum_{i=1}^k \alpha_iv_i$$ where $v_i\in\xi(x_i)$ for $k$-points $x_i\in\bord\G.$ Consider now some non torsion $\g\in\G$ such that $\g_-\notin\{\upla x1k\}.$ We then have $\g^n x_i\to\g_+$ and hence  $\R \rho\g^n(w)\to\xi(\g_+)$ in $\P(\R^d).$ Thus $\xi(\g_+)\in W,$ since $W$ is $\rho\G$-invariant one has $$\xi(\bord\G)=\xi(\overline{\G\cdot\g_+})\subset W.$$ This finishes the proof.

\end{proof}

We say that $g\in\PGL(d,\R)$ is \emph{proximal}, if it has a unique complex eigenvalue of maximal modulus, and its generalized eigenspace is one dimensional. This eigenvalue is necessarily real, and its modulus is equal to $\exp\lambda_1(g).$ Denote by $g_+,$ the $g$-fixed line of $\R^d$ consisting of eigenvectors of this eigenvalue, and $g_-$ the $g$-invariant complement of $g_+$ (i.e. $\R^d=g_+\oplus g_-$). The line $g_+$ is an attractor on $\P(\R^d),$ for the action of $g,$ and $g_-$ is the repelling hyperplane.

\begin{lema}[{\cite[Section 3]{quantitative}}]\label{lema:loxodromic} Let $\rho:\G\to \PGL(d,\R)$ be a convex irreducible representation. Then for every non torsion element $\g\in\G,$ $\rho (\g)$ is proximal, $\xi(\g_+)$ is its attractive fixed line and $\xi^*(\g_-)$ is the repelling hyperplane. Consequently $\xi(x)\subset\xi^*(x)$ for every $x\in \bord\G.$
\end{lema}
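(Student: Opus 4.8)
The plan is to bootstrap from the fact, already established in Lemma \ref{lema:irreducible}, that $\rho\G$ acts irreducibly on $\langle\xi(\bord\G)\rangle = \R^d$ (irreducibility of $\rho$ lets us identify this span with all of $\R^d$). First I would show that $\rho(\g)$ is proximal. Fix a non-torsion $\g\in\G$ with attracting and repelling fixed points $\g_+,\g_-\in\bord\G$. Pick a point $x\in\bord\G$ distinct from $\g_-$; then $\g^n x\to\g_+$ in $\bord\G$, and by H\"older-continuity of $\xi$ we get $\rho(\g)^n\,\xi(x)\to\xi(\g_+)$ in $\P(\R^d)$. This already forces $\xi(\g_+)$ to be an eigenline of $\rho(\g)$ (it is fixed, being the limit of its own $\rho(\g)$-translates), whose eigenvalue has maximal modulus among those eigenvalues whose eigenspaces meet $\langle\xi(\bord\G\setminus\{\g_-\})\rangle$. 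To upgrade "maximal modulus in some directions" to "strictly dominant with one-dimensional generalized eigenspace", I would use the transversality hypothesis: $\xi(\g_+)\oplus\ker\xi^*(\g_-)=\R^d$, so $\ker\xi^*(\g_-)$ is a $\rho(\g)$-invariant hyperplane complementary to $\xi(\g_+)$ (it is invariant because $\g_-$ is $\g$-fixed, so $\xi^*(\g_-)$ is a $\rho(\g)$-fixed point of $\P({\R^d}^*)$). Now the key convergence argument: for any line $L$ not contained in $\ker\xi^*(\g_-)$, writing $\R^d=\xi(\g_+)\oplus\ker\xi^*(\g_-)$ and taking $L=\xi(x)$ for suitable $x$, the points $\rho(\g)^n L$ converge to $\xi(\g_+)$; combined with invariance of the hyperplane this shows the eigenvalue on $\xi(\g_+)$ strictly dominates every eigenvalue on the complement in modulus, and a standard argument (considering the Jordan form of $\rho(\g)$ restricted to the complement and the growth rate of iterates) shows the generalized $\xi(\g_+)$-eigenspace must be exactly one-dimensional — otherwise $\rho(\g)^n$ would not project a generic line onto $\xi(\g_+)$ but onto a higher-dimensional limit. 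Hence $\rho(\g)$ is proximal with $\g_+$-attractor $\xi(\g_+)$ and repelling hyperplane $\ker\xi^*(\g_-)=\xi^*(\g_-)$ in the notation of the paper.

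Next I would deduce the inclusion $\xi(x)\subset\xi^*(x)$ for all $x\in\bord\G$. Since $\bord\G$ has no isolated points and the set of attracting fixed points $\{\g_+:\g\in\G\text{ non-torsion}\}$ is dense in $\Lim_\G=\bord\G$ (this is standard for convex co-compact groups acting on $\CAT(-1)$ spaces, and follows from density of periodic orbits of the geodesic flow, Theorem \ref{teo:anosovclosing}), and since both $\xi$ and $\xi^*$ are continuous, it suffices to prove $\xi(\g_+)\subset\xi^*(\g_+)$ for every non-torsion $\g$. For such $\g$, from the proximality just established, $\xi(\g_+)$ is the dominant eigenline and $\xi^*(\g_-)$ is the repelling hyperplane. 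But we also know $\xi^*(\g_+)$ is itself a $\rho(\g)$-invariant hyperplane (as $\g_+$ is $\g$-fixed). A $\rho(\g)$-invariant hyperplane is the kernel of a dual eigenvector; since the spectrum of $\rho(\g)$ acting on $({\R^d})^*$ is the same as on $\R^d$, the possible invariant hyperplanes are spanned by dual generalized eigenspaces. If $\xi^*(\g_+)$ did not contain the dominant eigenline $\xi(\g_+)$, then $\xi^*(\g_+)$ would be the unique invariant hyperplane complementary to $\xi(\g_+)$, i.e. $\xi^*(\g_+)=\xi^*(\g_-)$; but the transversality hypothesis applied to the distinct points $\g_+\neq\g_-$ gives $\xi(\g_+)\oplus\ker\xi^*(\g_-)=\R^d$, while $\xi(\g_+)\oplus\ker\xi^*(\g_+)$ would also be a direct sum — consistency is fine, but then applying transversality at the distinct pair $(\g_-,\g_+)$ forces $\xi(\g_-)\oplus\ker\xi^*(\g_+)=\R^d$, and combining with $\ker\xi^*(\g_+)=\ker\xi^*(\g_-)$ and $\xi(\g_+)$'s dominance under $\rho(\g)^{-1}$ leads to a contradiction with $\rho(\g^{-1})$ being proximal with attractor $\xi(\g_-)$. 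Tracking this carefully yields $\xi(\g_+)\subset\xi^*(\g_+)$.

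I expect the main obstacle to be the step showing the generalized eigenspace of the dominant eigenvalue of $\rho(\g)$ is exactly one-dimensional (rather than merely that the eigenvalue is dominant in modulus): one has to rule out a nontrivial Jordan block, and the cleanest way is to track the precise asymptotics of $\rho(\g)^n$ acting on lines transverse to the invariant hyperplane $\ker\xi^*(\g_-)$ — if there were a Jordan block the normalized iterates would still converge to $\xi(\g_+)$ pointwise, so one instead needs to exploit convergence on an open set of lines together with H\"older-continuity of $\xi$ near $\g_+$, comparing rates, or alternatively invoke that $\rho$ restricted to $\langle\xi(\bord\G)\rangle$ being irreducible with a transverse pair of equivariant maps forces the boundary dynamics to be "1-proximal" in a uniform way. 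Since the lemma is cited from \cite[Section 3]{quantitative}, the intended proof presumably runs through the machinery there; in a self-contained write-up I would lean on the irreducibility from Lemma \ref{lema:irreducible} plus the above convergence arguments, with the Jordan-block exclusion being the delicate point requiring the transversality hypothesis in an essential way.
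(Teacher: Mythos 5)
Your overall strategy is the standard one (note the paper itself gives no proof of this lemma: it is quoted from \cite[Section 3]{quantitative}), but there is a genuine gap at the heart of the proximality step. What you need is that the eigenvalue $\lambda$ of $\rho\g$ on the invariant line $\xi(\g_+)$ strictly dominates, in modulus, the spectrum of the restriction of $\rho\g$ to the invariant hyperplane $W=\ker\xi^*(\g_-)$. Your justification is that ``for any line $L$ not contained in $\ker\xi^*(\g_-)$ \dots taking $L=\xi(x)$'' one has $\rho(\g)^nL\to\xi(\g_+)$; but an arbitrary line transverse to $W$ is not of the form $\xi(x)$ (the image of $\xi$ is a thin, typically fractal, family of lines), so the convergence is a priori available only for lines in the image of $\xi$. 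Moreover, convergence for one such line $\xi(x)=\R(e+w_x)$, $w_x\in W$, only gives $\|\rho(\g)^n w_x\|=o(|\lambda|^n)$ for that particular $w_x$, and says nothing about eigenvalues of $\rho(\g)|_W$ whose generalized eigenspaces are not seen by $w_x$. The missing ingredient is exactly the quantitative use of irreducibility: since $\langle\xi(\bord\G)\rangle=\R^d$ (Lemma \ref{lema:irreducible}) and $\bord\G\setminus\{\g_-\}$ is dense, the lines $\{\xi(x):x\neq\g_-\}$ span $\R^d$, hence the vectors $w_x$ span $W$; the decay $\|\rho(\g)^n w\|=o(|\lambda|^n)$ then holds for every $w\in W$, which forces the spectral radius of $\rho(\g)|_W$ to be strictly smaller than $|\lambda|$ (an eigenvector, or the real/imaginary part of a complex one, for an eigenvalue of modulus $\geq|\lambda|$ would violate the decay). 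Without this spanning argument the asserted dominance simply does not follow from the convergence you invoke.

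Relatedly, the point you single out as the main obstacle, excluding a nontrivial Jordan block for the top eigenvalue, is not where the difficulty lies, and the argument you sketch for it is incorrect: an element with a nontrivial Jordan block for its leading eigenvalue still sends a generic line to a single limiting line, not to a ``higher-dimensional limit''. Once strict dominance over $W$ is established as above, one-dimensionality of the top generalized eigenspace is automatic, because generalized eigenspaces are compatible with the invariant splitting $\R^d=\xi(\g_+)\oplus W$: the generalized eigenspace of $\lambda$ meets $W$ trivially, so it equals $\xi(\g_+)$. Finally, your deduction of $\xi(x)\subset\ker\xi^*(x)$ can be made much more direct: applying the proximality statement to $\g^{-1}$, its attracting line $\xi(\g_-)$ is the eigenline of $\rho\g$ of smallest modulus, hence contained in the repelling hyperplane $\ker\xi^*(\g_-)$; density of fixed points in $\bord\G$ and continuity of $(\xi,\xi^*)$ then give the inclusion at every point. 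Your longer route through the classification of $\rho(\g)$-invariant hyperplanes does work once proximality is in hand, but the contradiction you leave as ``tracking this carefully'' is precisely the observation just stated, so it is cleaner to use it directly.
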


Fix now a norm $\|\ \|$ on $\R^d.$ We define the H\"older cocycles $\beta_\rho,\vo\beta_\rho:\G\times\bord\G\to\R$ by $$\beta_\rho(\g,x)=\log\frac{\|\rho(\g)v\|}{\|v\|}\textrm{ and }
\vo \beta_\rho(\g,x)=\log\frac{\|\theta\circ\rho(\g^{-1})\|}{\|\theta\|},$$ for a non zero $v\in\xi(x),$  and a non zero linear form $\theta\in\xi^*(x).$ Lemma \ref{lema:loxodromic} implies the following.

\begin{lema}[{\cite[Section 3]{quantitative}}]\label{lema:periodo} Assume $\rho$ is convex and irreducible, then for every non-torsion $\g\in\G$ one has $\ell_{\beta_\rho}(\g)=\lambda_1(\rho\g),$ and $\ell_{\vo\beta_\rho}(\g)=\lambda_1(\rho\g^{-1})=-\lambda_d(\rho\g).$
\end{lema}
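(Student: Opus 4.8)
The plan is to read everything off Lemma~\ref{lema:loxodromic}: for non-torsion $\gamma$ the element $\rho(\gamma)$ is proximal, $\xi(\gamma_+)$ is its attracting fixed line, and $\ker\xi^*(\gamma_-)$ is its repelling hyperplane.

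For the first assertion, by definition $\beta_\rho(\gamma,\gamma_+)=\log(\|\rho(\gamma)v\|/\|v\|)$ for any $v\neq 0$ in $\xi(\gamma_+)$. Since $\xi(\gamma_+)$ is $\rho(\gamma)$-invariant, such a $v$ is an eigenvector of $\rho(\gamma)$, and since it spans the attracting line the corresponding eigenvalue has maximal modulus; by the definition of proximality recalled just above Lemma~\ref{lema:loxodromic} this modulus equals $\exp\lambda_1(\rho\gamma)$. Hence $\|\rho(\gamma)v\|=\exp(\lambda_1(\rho\gamma))\|v\|$ and $\beta_\rho(\gamma,\gamma_+)=\lambda_1(\rho\gamma)$.

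For the second assertion I would apply Lemma~\ref{lema:loxodromic} to the non-torsion element $\gamma^{-1}$, noting $(\gamma^{-1})_+=\gamma_-$ and $(\gamma^{-1})_-=\gamma_+$: thus $\xi(\gamma_-)$ is the attracting line of $\rho(\gamma^{-1})$ and $\ker\xi^*(\gamma_+)$ is its repelling hyperplane, that is, the $\rho(\gamma^{-1})$-invariant complement of $\xi(\gamma_-)$. Fix $\theta\neq 0$ in $\xi^*(\gamma_+)$. As $\rho(\gamma^{-1})$ preserves both $\ker\theta$ and the complementary line $\xi(\gamma_-)$, the functional $\theta\circ\rho(\gamma^{-1})$ again vanishes on $\ker\theta$, hence is a scalar multiple $c\,\theta$; evaluating at a non-zero $w\in\xi(\gamma_-)$ --- which satisfies $\theta(w)\neq 0$ by the transversality $\xi(\gamma_-)\oplus\ker\xi^*(\gamma_+)=\R^d$ from the definition of a convex representation --- and using that $\rho(\gamma^{-1})w$ equals the top eigenvalue of $\rho(\gamma^{-1})$ times $w$, one gets that $c$ is that eigenvalue, of modulus $\exp\lambda_1(\rho\gamma^{-1})$. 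Therefore $\vo\beta_\rho(\gamma,\gamma_+)=\log(\|\theta\circ\rho(\gamma^{-1})\|/\|\theta\|)=\lambda_1(\rho\gamma^{-1})$.

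Finally $\lambda_1(\rho\gamma^{-1})=-\lambda_d(\rho\gamma)$, since the eigenvalues of $\rho(\gamma^{-1})$ are the reciprocals of those of $\rho(\gamma)$, so the largest-modulus eigenvalue of $\rho(\gamma^{-1})$ is the reciprocal of the smallest-modulus eigenvalue of $\rho(\gamma)$, compatibly with the normalization $\sum_i\lambda_i=0$ built into the Jordan projection into $\frak a$. No step here is genuinely difficult; the only points needing care are the dual bookkeeping --- one must invoke Lemma~\ref{lema:loxodromic} for $\gamma^{-1}$, not $\gamma$, to recognize $\theta\in\xi^*(\gamma_+)$ as a left-eigenvector of $\rho(\gamma^{-1})$ --- and using a consistent choice of lifts of elements of $\PGL(d,\R)$ to $\GL(d,\R)$ in the definitions of $\beta_\rho$, $\vo\beta_\rho$ and of the $\lambda_i$.
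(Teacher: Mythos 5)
Your argument is correct and follows exactly the route the paper intends: the statement is quoted from \cite{quantitative} and justified here by the single remark that Lemma \ref{lema:loxodromic} implies it, which is precisely what you carry out (eigenvector in $\xi(\g_+)$ for the first period, the same lemma applied to $\g^{-1}$ together with invariance of $\ker\xi^*(\g_+)$ for the dual period, and the reciprocal-eigenvalue identity $\lambda_1(\rho\g^{-1})=-\lambda_d(\rho\g)$). Your closing caveat about fixing consistent lifts to define $\beta_\rho$, $\vo\beta_\rho$ and the $\lambda_i$ is the right bookkeeping point and matches the conventions implicitly used in the paper.
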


\subsection{Adjoint representation} Given an irreducible convex representation $\rho:\G\to\PGL(d,\R)$ we will now show how the Adjoint representation $\Ad:\PGL(d,\R)\to\PGL(\frak{sl}(d,\R))$ induces again an irreducible convex representation $\A_\rho$ such that $$\lambda_1(\A_\rho\g) =\lambda_1(\rho\g) -\lambda_d(\rho\g).$$ This is standard.

Recall that the \emph{adjoint representation} is defined by conjugation $\Ad(g)(T)=gTg^{-1},$ where $T\in\frak{sl}(d,\R)=\{\textrm{traceless endomorphisms of $\R^d$}\}.$ Consider $\scr  F_*(\R^d)$ the space of incomplete flags consisting of a line contained on a hyperplane, $$\scr F_*(\R^d)=\{(v,\theta)\in\P(\R^d)\times\P((\R^d)^*): \theta(v)=0\}.$$

Given $(v,\theta)\in\scr F_*$ define $M(v,\theta)\in\P(\frak{sl}(d,\R))$ by $M(v,\theta)(w)=\theta(w)v$ and define $\Phi(v,\theta)\in\P(\frak{sl}(d,\R)^*)$ by $\Phi(v,\theta)(T)=\theta(Tv).$ These maps induce a map $$(M,\Phi):\scr F_*(\R^d)\to\scr F_*(\frak{sl}(d,\R)).$$

Say that two points $(v,\t),(w,\varphi)\in\scr F_*(\R^d)$ are \emph{in general position} if $$\theta(w)\neq0\textrm{ and }\varphi(v)\neq 0.$$

\begin{lema}\label{lema:aldope} The maps $M$ and $\Phi$ are $\Ad$-equivariant. If $(v,\t),(w,\varphi)\in\scr F_*(\R^d)$ are in general position, the points $$(M,\Phi)(v,\theta)\textrm{ and }(M,\Phi)(w,\varphi)$$ are also in general position. If $g$ and $g^{-1}$ are proximal then $\Ad g$ is proximal and its attractor  is $M(g_+,{(g^{-1})}_-).$
\end{lema}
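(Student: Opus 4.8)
The plan is to establish the three assertions by direct computation from the defining formulas $M(v,\theta)(w)=\theta(w)v$ and $\Phi(v,\theta)(T)=\theta(Tv)$, keeping careful track of the relevant group actions: $g\in\PGL(d,\R)$ acts on $\scr F_*(\R^d)$ by $(v,\theta)\mapsto(gv,\theta\circ g^{-1})$ (which indeed stays in $\scr F_*(\R^d)$, since $(\theta\circ g^{-1})(gv)=\theta(v)=0$), $\Ad g$ acts on $\P(\frak{sl}(d,\R))$ by $T\mapsto gTg^{-1}$, and the induced action on $\P(\frak{sl}(d,\R)^*)$ is $\Xi\mapsto\Xi\circ\Ad(g^{-1})$. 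Equivariance is then two one-line checks. For $M$: $\Ad(g)\bigl(M(v,\theta)\bigr)(w)=g\bigl(\theta(g^{-1}w)\,v\bigr)=(\theta\circ g^{-1})(w)\,(gv)=M(gv,\theta\circ g^{-1})(w)$. For $\Phi$: $\bigl(\Phi(v,\theta)\circ\Ad(g^{-1})\bigr)(T)=\theta\bigl(g^{-1}Tg\,v\bigr)=(\theta\circ g^{-1})\bigl(T(gv)\bigr)=\Phi(gv,\theta\circ g^{-1})(T)$. Along the way I would record that $(M,\Phi)$ genuinely lands in $\scr F_*(\frak{sl}(d,\R))$: $\tr M(v,\theta)=\theta(v)=0$ because $(v,\theta)\in\scr F_*(\R^d)$, and $\Phi(v,\theta)\bigl(M(v,\theta)\bigr)=\theta\bigl(\theta(v)v\bigr)=\theta(v)^2=0$; nontriviality of $M(v,\theta)$ is clear and that of $\Phi(v,\theta)$ follows from the next step.

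For general position, recall that two points of $\scr F_*(\frak{sl}(d,\R))$ are in general position precisely when neither functional annihilates the other's endomorphism, so it suffices to compute $\Phi(v,\theta)\bigl(M(w,\varphi)\bigr)=\theta\bigl(\varphi(v)w\bigr)=\varphi(v)\,\theta(w)$ and, symmetrically, $\Phi(w,\varphi)\bigl(M(v,\theta)\bigr)=\theta(w)\,\varphi(v)$. Both are nonzero exactly when $\theta(w)\neq0$ and $\varphi(v)\neq0$, that is, exactly when $(v,\theta)$ and $(w,\varphi)$ are in general position in $\scr F_*(\R^d)$; taking any $(w,\varphi)$ in general position with a given $(v,\theta)$ also shows $\Phi(v,\theta)\neq0$.

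For proximality, assume $g$ and $g^{-1}$ are both proximal and write $\mu_1$ for the (real, simple) eigenvalue of $g$ of largest modulus and $\mu_d$ for the (real, simple) one of smallest modulus. Under $\Ad g$ one has $\frak{sl}(d,\R)\subset\End(\R^d)\cong\R^d\otimes{\R^d}^*$, the action being by $g$ on the first factor and by the contragredient $\theta\mapsto\theta\circ g^{-1}$ on the second; hence the eigenvalues of $\Ad g$ on $\End(\R^d)$ are the ratios $\mu_i\mu_j^{-1}$. Since $|\mu_i\mu_j^{-1}|\le|\mu_1\mu_d^{-1}|$ with equality forcing $\mu_i=\mu_1$ and $\mu_j=\mu_d$ (uniqueness of the two extremal eigenvalues of $g$), the ratio $\mu_1\mu_d^{-1}$ is the strictly dominant eigenvalue of $\Ad g$, it is real, and its generalized eigenspace is the tensor of the one-dimensional generalized $\mu_1$-eigenspace of $g$ with the one-dimensional generalized $\mu_d^{-1}$-eigenspace of the contragredient action, hence is one-dimensional. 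Thus $\Ad g$ is proximal on $\End(\R^d)$, and since $\frak{sl}(d,\R)$ is an $\Ad g$-invariant subspace containing this eigenline, $\Ad g$ is proximal on $\frak{sl}(d,\R)$ as well, with the same attractor. To name it, take $v_1$ spanning $g_+$ and a functional $\theta_0$ representing $(g^{-1})_-\in\P({\R^d}^*)$; then $\theta_0$ is a left $\mu_d$-eigenvector of $g$, so by the equivariance already proved $\Ad(g)\bigl(M(v_1,\theta_0)\bigr)=M(\mu_1 v_1,\mu_d^{-1}\theta_0)=\mu_1\mu_d^{-1}\,M(v_1,\theta_0)$, which identifies $M(g_+,(g^{-1})_-)$ with the dominant eigenline, i.e. with $(\Ad g)_+$.

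I do not expect a genuine obstacle — the statement is a bookkeeping exercise — but the one point deserving care, and the closest thing to a hard step, is matching the two descriptions of the second factor in the last display: the repelling hyperplane $(g^{-1})_-$ of $g^{-1}$ is the $g^{-1}$-invariant complement of the $\mu_d$-eigenline $(g^{-1})_+$, equivalently the sum of all generalized eigenspaces of $g$ other than that of $\mu_d$, and as a point of $\P({\R^d}^*)$ it is represented by a left eigenvector $\theta_0$ of $g$ with eigenvalue $\mu_d$. One must then check that $\theta_0(v_1)=0$ (so that $(g_+,(g^{-1})_-)$ lies in $\scr F_*(\R^d)$ and $M(g_+,(g^{-1})_-)$ is traceless) and that $\theta_0\circ g=\mu_d\theta_0$, which is exactly what makes the final computation close.
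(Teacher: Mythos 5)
Your verification is correct, and it is exactly the ``standard and direct'' computation the paper has in mind (the paper omits the proof for this reason): equivariance by the two one-line calculations, general position via $\Phi(v,\theta)\bigl(M(w,\varphi)\bigr)=\varphi(v)\,\theta(w)$, and proximality of $\Ad g$ by comparing the eigenvalue ratios $\mu_i\mu_j^{-1}$ and identifying the dominant eigenline with $M(g_+,(g^{-1})_-)$. Your care with representing $(g^{-1})_-$ by a left $\mu_d$-eigenvector and checking $\theta_0(v_1)=0$ is precisely the point that makes the bookkeeping close, so nothing is missing.
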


The proof of the lemma is standard and direct.

\begin{lema}\label{lema:adjunta} Consider a convex irreducible representation $\rho:\G\to\PGL(d,\R)$ and consider the map $\eta=M\circ(\xi,\xi^*):\bord\G\to\P(\frak{sl}(d,\R)).$ Denote by $V_\rho=\<\eta(\bord\G)\>$ and $$\eta^*=(\Phi\circ(\xi,\xi^*))\cap V_\rho.$$ Then $\A_\rho=\Ad\circ\rho|V_\rho:\G\to\PGL(V_\rho)$ is an irreducible convex representation with equivariant maps $(\eta,\eta^*),$ moreover for a non torsion $\g\in\G,$ one has $$\lambda_1(\A_\rho\g)=\lambda_1(\rho\g)-\lambda_d(\rho\g).$$

\end{lema}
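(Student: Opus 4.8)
The plan is to verify the three assertions of Lemma \ref{lema:adjunta} in order, relying on Lemma \ref{lema:aldope} for the pointwise statements and on Lemma \ref{lema:irreducible} together with Lemma \ref{lema:loxodromic} for the global ones. First I would recall that since $\rho$ is convex and irreducible, Lemma \ref{lema:loxodromic} tells us that for distinct $x,y\in\bord\G$ the pair $(\xi(x),\xi^*(x))$ and $(\xi(y),\xi^*(y))$ lies in $\scr F_*(\R^d)$ (the inclusion $\xi(x)\subset\ker\xi^*(x)$ is exactly the statement $\xi(x)\subset\xi^*(x)$), and moreover these two points are in general position in the sense defined just before Lemma \ref{lema:aldope}, because $\xi(x)\oplus\ker\xi^*(y)=\R^d$ and symmetrically. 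Hence $\eta=M\circ(\xi,\xi^*)$ and $\Phi\circ(\xi,\xi^*)$ take values in $\scr F_*(\frak{sl}(d,\R))$ and, by the second assertion of Lemma \ref{lema:aldope}, $(M,\Phi)(\xi(x),\xi^*(x))$ and $(M,\Phi)(\xi(y),\xi^*(y))$ are in general position whenever $x\neq y$. Equivariance of $M$ and $\Phi$ under $\Ad\circ\rho$ is immediate from the first assertion of Lemma \ref{lema:aldope} and equivariance of $(\xi,\xi^*)$, and H\"older continuity is inherited because $M$ and $\Phi$ are smooth (in fact algebraic) maps.

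Next I would address irreducibility of $\A_\rho$ acting on $V=\<\eta(\bord\G)\>$. By construction $V$ is the span of $\eta(\bord\G)$, so $\rho\G$ acts on it; the point is that this action is irreducible. This is precisely the content of Lemma \ref{lema:irreducible} applied to the candidate convex representation on $V$: once we know $(\eta,\eta^*)$ is a $\A_\rho$-equivariant H\"older map with $\eta(x)\oplus\ker\eta^*(y)=V$ for $x\neq y$ — which follows from the general-position statement of the previous paragraph, intersecting $\Phi\circ(\xi,\xi^*)$ with $V$ to produce $\eta^*$ — Lemma \ref{lema:irreducible} gives that $\A_\rho$ is irreducible on $\<\eta(\bord\G)\>=V$. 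One must check that $\ker(\eta^*(y)|V)$ still complements $\eta(x)$ inside $V$: this is the statement that the hyperplane $\Phi(\xi(y),\xi^*(y))$ of $\frak{sl}(d,\R)$ does not contain the line $\eta(x)=M(\xi(x),\xi^*(x))$, i.e. $\Phi(\xi(y),\xi^*(y))(M(\xi(x),\xi^*(x)))\neq0$, which unwinds to $\xi^*(y)(\xi(x))\cdot(\text{nonzero scalar})\neq0$ — true by transversality. Intersecting with $V$ then gives a genuine equivariant pair $(\eta,\eta^*)$ for $\A_\rho$, so $\A_\rho$ is convex and irreducible.

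Finally, the eigenvalue identity: for non torsion $\g\in\G$, Lemma \ref{lema:loxodromic} says $\rho\g$ is proximal with attractor $\xi(\g_+)$ and repelling hyperplane $\xi^*(\g_-)$, and the same applies to $\rho\g^{-1}$ (also non torsion), so both $\rho\g$ and $(\rho\g)^{-1}$ are proximal. By the third assertion of Lemma \ref{lema:aldope}, $\Ad(\rho\g)$ is then proximal with attractor $M((\rho\g)_+,((\rho\g)^{-1})_-)=\eta(\g_+)\in V$. Since $V$ is $\A_\rho$-invariant and contains the attracting line of $\Ad(\rho\g)$, the restriction $\A_\rho\g=\Ad(\rho\g)|V$ is again proximal, with $\lambda_1(\A_\rho\g)$ equal to the log of the top eigenvalue modulus of $\Ad(\rho\g)$, namely $\lambda_1(\rho\g)-\lambda_d(\rho\g)$ (the eigenvalues of $\Ad(g)=\mathrm{Ad}(g)$ on $\frak{sl}(d,\R)$ are the ratios $\mu_i/\mu_j$ of eigenvalues of $g$, so the largest modulus is $e^{\lambda_1(\rho\g)-\lambda_d(\rho\g)}$). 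The only subtlety, and the step I expect to require the most care, is confirming that this maximal ratio is actually realized on the invariant subspace $V$ and not merely on the full $\frak{sl}(d,\R)$: this is handled by the explicit identification of the attractor $\eta(\g_+)$ — an eigenvector for the eigenvalue $\mu_1/\mu_d$ of $\Ad(\rho\g)$ — as an element of $V$, which is exactly what Lemma \ref{lema:aldope} provides. Everything else is a routine unwinding of definitions.
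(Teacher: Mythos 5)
Your proposal is correct and follows essentially the same route as the paper: irreducibility via Lemma \ref{lema:irreducible} applied to the new convex pair, convexity and equivariance from Lemma \ref{lema:aldope} combined with Lemma \ref{lema:loxodromic}, and the eigenvalue identity from the fact that the attractor $M(\xi(\g_+),\xi^*(\g_+))=\eta(\g_+)$ of $\Ad(\rho\g)$ lies in $V$, so the top eigenvalue $e^{\lambda_1(\rho\g)-\lambda_d(\rho\g)}$ is realized on the invariant subspace. Your write-up simply makes explicit the transversality and restriction checks that the paper leaves to the reader.
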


We will say that $\A_\rho$ is the \emph{irreducible adjoint representation of $\rho.$}

\begin{proof} Irreducibility follows from Lemma \ref{lema:irreducible}. The other properties are consequence of Lemma \ref{lema:aldope}, together with Lemma \ref{lema:loxodromic}. The last statement follows from the fact that, if $\g\in\G$ is non torsion, then $\xi^*(\g_+)$ is the repelling hyperplane of $\rho\g^{-1}$ and hence $$M(\xi(\g_+),\xi^*(\g_+)),$$ the attractor of $\A_\rho\g,$ belongs to $V_\rho.$
\end{proof}

\subsection{Regularity}

The following lemma is from Benoist \cite{convexes1}.

\begin{lema}[Benoist \cite{convexes1}]\label{lema:lambda2} Let $g\in\PGL(V)$ be proximal and let $V_{\lambda_2(g)}$ be the sum of the  characteristic spaces of $g$ whose associated eigenvalue is of modulus $\exp\lambda_2(g).$ Then for every $v\notin\P(g_-)$ with non zero component in $V_{\lambda_2(g)}$ one has $$\lim_{n\to\infty}\frac{\log d_\P(g^n(v),g_+)}n= \lambda_2(g)-\lambda_1(g).$$
\end{lema}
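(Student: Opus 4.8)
The statement (Lemma \ref{lema:lambda2}) is a purely linear-algebraic fact about the dynamics of a proximal element $g\in\PGL(V)$ acting on $\P(V)$: a generic point whose component along the "second eigenspace" $V_{\lambda_2(g)}$ is nonzero converges to the attracting fixed point $g_+$ at the exponential rate $\lambda_2(g)-\lambda_1(g)$. The natural approach is to decompose $\R^d$ (write $V=\R^d$, $\lambda_i=\lambda_i(g)$) according to the modulus of the eigenvalues of $g$, normalize so $\lambda_1=0$ by replacing $g$ with $e^{-\lambda_1}g$ (this does not change the action on $\P(V)$), and then track how each component of $g^n v$ grows.

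First I would write $V = \R g_+ \oplus V_{\lambda_2} \oplus W$, where $\R g_+$ is the attracting line, $V_{\lambda_2}$ is the sum of characteristic subspaces with eigenvalue modulus $e^{\lambda_2}$, and $W$ is the sum of the remaining characteristic subspaces (eigenvalue moduli $<e^{\lambda_2}$); all three are $g$-invariant and $g_- = V_{\lambda_2}\oplus W$. Write $v = v_+ + v_2 + w$ accordingly; the hypothesis $v\notin\P(g_-)$ means $v_+\neq 0$, and the hypothesis on the $V_{\lambda_2}$-component means $v_2\neq 0$. After normalizing $\lambda_1=0$, we have $g^n v_+ = v_+$ exactly (eigenvalue of modulus $1$ on the attracting line — in fact we may further assume the eigenvalue is $1$ up to scaling), while on $V_{\lambda_2}$ the element $g$ acts with all eigenvalues of modulus $e^{\lambda_2}$, so by the standard estimate on powers of a linear map in terms of its spectral radius (Jordan form: $\|g^n|_{V_{\lambda_2}}\| = e^{n\lambda_2}\cdot n^{O(1)}$ and $\|g^n v_2\|\geq e^{n\lambda_2}\cdot n^{-O(1)}\|\cdot\|$ for $v_2\neq 0$ by the same Jordan analysis applied to the dominant Jordan block), one gets
$$\frac{1}{n}\log\|g^n v_2\| \to \lambda_2.$$
Similarly $\frac1n\log\|g^n w\|$ is bounded above by a number $<\lambda_2$.

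Then I would estimate $d_\P(g^n v, g_+)$. Since $g^n v = v_+ + g^n v_2 + g^n w$ and $v_+\neq0$ is fixed, in the projective metric coming from a Euclidean norm one has, up to bounded multiplicative constants,
$$d_\P(g^n v, g_+) \asymp \frac{\|g^n v_2 + g^n w\|}{\|g^n v\|} \asymp \|g^n v_2 + g^n w\|$$
for $n$ large, because the denominator $\|g^n v\|\to\|v_+\|>0$ (all other components decay). And $\|g^n v_2 + g^n w\|$ is comparable to $\|g^n v_2\|$ since $\|g^n w\|$ decays strictly faster. Taking $\frac1n\log$ and using the limit above gives $\frac1n\log d_\P(g^n v, g_+)\to\lambda_2 = \lambda_2 - \lambda_1$ after undoing the normalization.

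**Main obstacle.** The only genuinely delicate point is the two-sided control of $\|g^n v_2\|$ when $g$ restricted to $V_{\lambda_2}$ is not semisimple — one needs that a nonzero vector does not "leak" entirely into faster-decaying Jordan directions, i.e. the lower bound $\|g^n v_2\|\gg e^{n\lambda_2}n^{-c}$. This is handled by decomposing $v_2$ across the generalized eigenspaces and, within each, observing that the leading (largest Jordan block) contribution to $g^n v_2$ grows like $n^{k}e^{n\lambda_2}$ times a nonzero vector, with genericity not even needed here since any nonzero $v_2$ has such a leading term; over $\C$ one also must check the contributions from a complex-conjugate pair of eigenvalues of modulus $e^{\lambda_2}$ cannot cancel for all $n$, which follows because their combined contribution is $e^{n\lambda_2}$ times an almost-periodic (quasi-periodic) nonvanishing vector sequence bounded below on a positive-density set of $n$, enough for the $\limsup$; but since the statement is a genuine limit, one argues via the standard fact that $\frac1n\log\|g^n v_2\|\to\log(\text{spectral radius of }g|_{\text{cyclic span of }v_2}) = \lambda_2$, which is exactly Gelfand-type asymptotics. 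Everything else is routine bookkeeping with the Euclidean projective metric. Since the paper attributes this to Benoist \cite{convexes1}, I expect the write-up here to be brief, citing that reference and sketching only the decomposition above.
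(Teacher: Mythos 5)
Your argument is correct, and it is essentially the standard proof of this fact: the paper itself gives no proof of the lemma, which is quoted directly from Benoist \cite{convexes1}, so your write-up via the $g$-invariant splitting $V=g_+\oplus V_{\lambda_2(g)}\oplus W$ and the comparison of growth rates of the three components is exactly what is needed. One simplification for the point you flag as delicate: the lower bound $\|g^nv_2\|\geq C^{-1}n^{-k}e^{n\lambda_2(g)}\|v_2\|$ follows at once by writing $\|v_2\|=\|(g|_{V_{\lambda_2(g)}})^{-n}g^nv_2\|$ and bounding $\|(g|_{V_{\lambda_2(g)}})^{-n}\|\leq Cn^ke^{-n\lambda_2(g)}$, since all eigenvalues of the inverse of the restriction have modulus $e^{-\lambda_2(g)}$; this dispenses entirely with the discussion of cancellations along complex-conjugate pairs and positive-density sets of $n$.
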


The following lemma relates the H\"older exponent of the equivariant map, and eigenvalues of $\rho(\g)$ for non-torsion $\g\in\G.$

\begin{lema}\label{lema:1} Let $\rho:\G\to\PGL(d,\R)$ be a convex irreducible representation then, for every non torsion $\g\in\G,$ one has $$\alpha\leq\min\left\{ \frac{\lambda_1(\rho\g)-\lambda_2(\rho\g)}{|\g|},\frac{\lambda_{d-1}(\rho\g)-\lambda_d(\rho\g)}{|\g|}\right\},$$ when $\xi$ is $\alpha$-H\"older.
\end{lema}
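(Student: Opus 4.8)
The plan is to compare the metric $\d_o$ on $\bord\G$ (computed via the geodesic flow, with exponential rate governed by $|\g|$) against the metric $d_\P$ on $\P(\R^d)$ along the orbit of a power of $\g$, and to exploit the existing estimates on the contraction rates. Fix a non torsion $\g\in\G$. By Lemma \ref{lema:lambda2hyp}, for any $x\in\bord\G-\{\g_-\}$ we have $\lim_{n\to\infty}\frac1n\log\d_o(\g^nx,\g_+)=-|\g|$. On the other hand, since $\xi$ is $\alpha$-H\"older, there is $K>0$ with $d_\P(\xi(\g^nx),\xi(\g_+))\leq K\,\d_o(\g^nx,\g_+)^\alpha$, and by equivariance $\xi(\g^nx)=\rho(\g)^n\xi(x)$ while $\xi(\g_+)=(\rho\g)_+$ by Lemma \ref{lema:loxodromic} (here $\rho\g$ is proximal). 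Hence $\limsup_{n\to\infty}\frac1n\log d_\P((\rho\g)^n\xi(x),(\rho\g)_+)\leq -\alpha|\g|$ for every such $x$.

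Next I would invoke Lemma \ref{lema:lambda2} to get a lower bound on that same lim inf for a suitably chosen $x$. The point is to choose $x\in\bord\G$ so that $v:=\xi(x)$ is not in $\P((\rho\g)_-)$ and has nonzero component in the second eigenspace $V_{\lambda_2(\rho\g)}$; granting such a choice, Lemma \ref{lema:lambda2} gives $\lim_{n\to\infty}\frac1n\log d_\P((\rho\g)^nv,(\rho\g)_+)=\lambda_2(\rho\g)-\lambda_1(\rho\g)$. Combining with the upper bound from the previous paragraph yields $\lambda_2(\rho\g)-\lambda_1(\rho\g)\leq-\alpha|\g|$, i.e. $\alpha\leq(\lambda_1(\rho\g)-\lambda_2(\rho\g))/|\g|$. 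To finish, observe that the contragredient representation $\rho^*$ is again convex irreducible with equivariant map $\xi^*$ (same H\"older exponent, by definition of convex), and $\lambda_i(\rho^*\g)=-\lambda_{d+1-i}(\rho\g)$; applying the bound just proved to $\rho^*$ gives $\alpha\leq(\lambda_{d-1}(\rho\g)-\lambda_d(\rho\g))/|\g|$, and taking the minimum of the two completes the proof.

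The main obstacle, and the step requiring the most care, is justifying that one can choose $x\in\bord\G$ with $\xi(x)$ having nonzero component along $V_{\lambda_2(\rho\g)}$ while staying off the repelling hyperplane $\xi^*(\g_-)$. If $\xi(\bord\G)$ were entirely contained in $(\rho\g)_+\oplus(\text{no }V_{\lambda_2}\text{ component})$, i.e. in a $\rho\g$-invariant subspace missing the $\lambda_2$-eigendirection, this would contradict irreducibility of $\rho$ on $\<\xi(\bord\G)\>=\R^d$ (Lemma \ref{lema:irreducible}), since the span of $\xi(\bord\G)$ would be a proper invariant subspace once $d\geq 2$ and the $\lambda_2$-eigenspace is nonzero — and the $\lambda_2$-eigenspace is nonzero precisely because $\rho\g$ is proximal but, being the image of a non torsion element, is not a homothety on $\R^d$ when $d\geq 2$. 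One must also handle the degenerate possibility that every $\xi(x)$ with nonzero $V_{\lambda_2}$-component happens to lie on $\P(\g_-)$; but $\P(\g_-)$ is a proper subspace and $\xi$ is continuous with $\xi(\bord\G)$ spanning everything, so a generic $x$ works. Once the generic point is secured, the rest is a routine comparison of exponential rates, and no further subtlety arises.
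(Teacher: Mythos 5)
Your proof of the first inequality is essentially the paper's argument: pick $x$ with $\xi(x)$ having nonzero component in $V_{\lambda_2(\rho\g)}$, apply Benoist's Lemma \ref{lema:lambda2} together with equivariance, H\"older continuity and Lemma \ref{lema:lambda2hyp}. One remark on the step you call the main obstacle: your resolution (``$\P((\rho\g)_-)$ is a proper subspace and $\xi(\bord\G)$ spans, so a generic $x$ works'') is not by itself conclusive, since a spanning set may very well have \emph{all} of its points with nonzero $V_{\lambda_2}$-component lying inside the hyperplane $(\rho\g)_-$ (e.g.\ in $\R^3$ the set $\{e_1+e_3,\ e_2,\ e_3\}$ with $g_+=\R e_1$, $V_{\lambda_2}=\R e_2$). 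The clean way to close this, implicit in the paper, is transversality: by Lemma \ref{lema:loxodromic} the repelling hyperplane of $\rho\g$ is $\ker\xi^*(\g_-)$, and convexity gives $\xi(x)\oplus\ker\xi^*(\g_-)=\R^d$ for every $x\neq\g_-$; so $\xi(x)\notin\P((\rho\g)_-)$ is automatic, and irreducibility (spanning, plus continuity to exclude the single point $\g_-$) only has to produce the nonzero $V_{\lambda_2}$-component.

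The genuine gap is in your second inequality. Passing to the contragredient $\rho^*$ replaces the equivariant map $\xi$ by $\xi^*$, so the bound you obtain is in terms of the H\"older exponent of $\xi^*$, and your parenthetical claim that this exponent equals $\alpha$ ``by definition of convex'' is unfounded: the definition only requires both maps to be H\"older-continuous, with no relation between their exponents, while the $\alpha$ in the Lemma (and in Theorem A and in $\alpha_\rho$) is the exponent of $\xi$ alone. As stated, your argument only bounds $\min\{\alpha_\xi,\alpha_{\xi^*}\}$, which is weaker than what the rigidity statements later need. The paper's route avoids this entirely: apply the first inequality to $\g^{-1}$, which involves the same map $\xi$ (now $\xi(\g_-)$ is the attractor of $\rho\g^{-1}$), uses $|\g^{-1}|=|\g|$, and $\lambda_1(\rho\g^{-1})-\lambda_2(\rho\g^{-1})=\lambda_{d-1}(\rho\g)-\lambda_d(\rho\g)$, yielding the second bound with the same $\alpha$. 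With that substitution your proof is correct and coincides with the paper's.
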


\begin{proof}Consider a non torsion $\g\in\G.$ Since $\rho$ is irreducible, there exists $x\in\bord\G-\{\g_-\}$ such that $\xi(x)$ has non zero projection to $V_{\lambda_2(\rho\g)},$ the characteristic space of $\rho\g$ of eigenvalue of modulus $\exp\lambda_2(\rho\g).$ Lemma \ref{lema:loxodromic} states that $\xi(\g_+)$ is the attractor of $\rho\g.$ Applying Benoist's Lemma \ref{lema:lambda2}, we obtain  $$\lambda_2(\rho\g)- \lambda_1(\rho\g)=\lim_{n\to\infty}\frac{\log d_\P(\rho\g^n(\xi x),\xi(\g_+))}n=\lim_{n\to\infty}\frac{\log d_\P(\xi(\g^nx),\xi(\g_+))}n,$$ since $\xi$ is equivariant. H\"older continuity of $\xi$ implies that the last quantity is smaller than $$ \lim_{n\to\infty}\frac{\log K\d_o(\g^n x,\g_+)^\alpha}n=-\alpha|\g|,$$ according to Lemma \ref{lema:lambda2hyp}. Thus, for every non torsion $\g\in\G,$ one has $$\alpha \leq \frac{\lambda_1(\rho\g)-\lambda_2(\rho\g)}{|\g|},$$ applying this inequality to $\g^{-1}$ one obtains $$\alpha \leq \frac{\lambda_{d-1}(\rho\g)-\lambda_d(\rho\g)}{|\g|}.$$
\end{proof}

\section{Proof of Theorem A}

This section is devoted to the proof of Theorem A. Consider an irreducible convex representation $\rho:\G\to\PGL(d,\R).$ Proposition \ref{prop:finito} states that $h_\rho\in(0,\infty).$ Since $\A_\rho$ is also convex and irreducible one gets $\h_\rho=2h_{\A_\rho}\in(0,\infty). $

Denote by $c$ either the H\"older cocyle $$\beta_\rho, \textrm{ or } \frac{\beta_\rho+\vo\beta_\rho}2.$$ Remark that, either $h_c=h_\rho,$ or  $h_c=\h_\rho.$

Using Lemma \ref{lema:intersection} for $c,$ one obtains a sequence $\{\g_n\}$ in $\G,$ such that $$\frac{\ell_c(\g_n)}{|\g_n|}\to\LL(c)\leq\frac{\hX}{h_c}.$$ Lemma \ref{lema:1} then gives \begin{equation}\label{equation:desigualdad1}\alpha\leq\min\left\{ \frac{(\lambda_1-\lambda_2)(\rho\g_n)}{|\g_n|},\frac{(\lambda_{d-1}-\lambda_d)(\rho\g_n)}{|\g_n|}\right\}\end{equation} $$\leq \min\left\{ \frac{(\lambda_1 -\lambda_2)(\rho\g_n)}{\l_c(\g_n)},\frac{(\lambda_{d-1} -\lambda_d)(\rho\g_n)}{\l_c(\g_n)} \right\} \LL(c)(1+\eps),$$ for a given $\eps$ and big enough $n.$

We will now distinguish the two cases $c=\beta_\rho$ and $c=(\beta_\rho+\vo\beta_\rho)/2$ separetly:

\subsection*{First case: $c=\beta_\rho$} In this case $\l_c(\g)=\lambda_1(\rho\g),$ $h_c=h_\rho$ (the spectral entropy of $\rho$) and equation (\ref{equation:desigualdad1}) is $$\frac{\alpha h_\rho}{\hX}\leq \frac{\alpha } {\LL(\beta_\rho)}\leq \min\left\{ \frac{\lambda_1 -\lambda_2}{\lambda_1}(\rho\g_n),\frac{\lambda_{d-1} -\lambda_d}{\lambda_1}(\rho\g_n) \right\}(1+\eps).$$ We will now maximize the function ${\sf V}_1:\P(\frak a^+)\to\R$ defined by $${\sf V}_1(a_1,\ldots,a_d)= \min\left\{\frac{a_1-a_2}{a_1},\frac{a_{d-1}-a_d}{a_1}\right\}.$$ Recall that $$\frak a^+=\{(a_1,\ldots,a_d)\in\R^d:a_1+\cdots+a_d=0\textrm{ and }a_1\geq\cdots\geq a_d\}$$ and consider $a\in\frak a^+.$ We will distinguish two cases.

\noindent
\underline{Assume $a_2\geq0$:} In this case one has $${\sf{V}_1(a)}\leq \frac{a_1-a_2}{a_1}=1-\frac{a_2}{a_1}\leq 1.$$

\noindent\underline{Assume $a_2<0$:}

\begin{lema}\label{obs:a2} In this case one has $a_1-a_2> a_{d-1}-a_d,$ hence ${\sf{V}}_1(a)=(a_{d-1}-a_d)/a_1.$
\end{lema}

\begin{proof}Recall that $a_{k+1}-a_k\leq0$ for all $k\in\{1,\ldots,d-1\}.$ Using the following tricky equality (recall $d\geq3$) $$a_1+(d-1)a_2+\sum_{k=2}^{d-1} (d-k)(a_{k+1}-a_k)=a_1+a_2+\cdots+a_d=0,$$ one obtains $$a_1-a_2+a_d-a_{d-1}=-da_2-\sum_{k=2}^{d-2} (d-k)(a_{k+1}-a_k)>0.$$ Hence $a_1-a_2>a_{d-1}-a_d.$

\end{proof}

Since $0>a_2\geq\cdots\geq a_d$ one has $$a_1=-a_2-\cdots-a_d>-(a_{d-1}+a_d)\geq0.$$ Given that  $d\geq3$ one obtains, $a_{d-1}<0<-a_{d-1}$ and subtracting $a_d$ on each side one gets $a_{d-1}-a_d<-(a_{d-1}+a_d)<a_1,$ finally $${\sf V}_1(a)=\frac{a_{d-1}-a_d}{a_1}<1.$$

In any case one obtains ${\sf V}_1\leq1.$ We then get

\begin{equation}\label{equation:desigualdad2}\frac{\alpha h_\rho}{\hX}\leq \frac{\alpha } {\LL(\beta_\rho)}\leq{\sf V}_1(\lambda(\rho\g_n))(1+\eps)\leq 1+\eps.\end{equation} Since $\eps$ is arbitrary, we obtain the desired inequality.

\subsection*{Second case: $c=(\beta_\rho+\vo\beta_\rho)/2$} In this case we have $\l_c(\g)=(\lambda_1(\rho\g)-\lambda_d(\rho\g))/2,$ $h_c=\h_\rho$ (the Hilbert entropy of $\rho$) and inequality (\ref{equation:desigualdad1}) is $$\frac{\alpha \h_\rho}{\hX}\leq \frac{\alpha} {\LL((\beta_\rho+\vo\beta_\rho)/2)}\leq \min\left\{\frac{\lambda_1-\lambda_2}{(\lambda_1-\lambda_d)/2}(\rho\g_n),\frac{\lambda_{d-1}-\lambda_d}{(\lambda_1-\lambda_d)/2}(\rho\g_n)\right\}(1+\eps),$$ for all $n$ large enough. 

We will now maximize the function ${\sf V}_2:\P(\frak a^+)\to\R$ defined by $${\sf V}_2(a_1,\ldots,a_d)= \min\left\{\frac{a_1-a_2}{(a_1-a_d)/2},\frac{a_{d-1}-a_d}{(a_1-a_d)/2}\right\}.$$

Consider $a\in\frak a^+$ such that $$x=a_1-a_2\leq a_{d-1}-a_d=y.$$ For such $a$ one has $a_2=a_1-x$ and $a_{d-1}=y+a_d.$ Since $d\geq3$ one has $a_2\geq a_{d-1}$ hence $a_1-x\geq a_d+y\geq a_d+x$ and thus $${\sf V}_2(a)=\frac{2x}{a_1-a_d}\leq 1.$$ 

If, on the opposite, one has $a\in\frak a^+$ such that $$x=a_{d-1}-a_d\leq a_1-a_2=y,$$ then, again the fact that $a_2\geq a_{d-1}$ implies $a_1-x\geq a_1-y\geq a_d+x$ and thus $${\sf V}_2(a)=\frac{2x}{a_1-a_d}\leq 1.$$ 

In any case one obtains ${\sf V}_2\leq1.$ We then get \begin{equation}\label{equation:desigualdad} \frac{\alpha \h_\rho}{\hX}\leq \frac{\alpha} {\LL((\beta_\rho+\vo\beta_\rho)/2)}\leq{\sf V}_2(\lambda(\rho\g_n))(1+\eps)\leq 1+\eps.\end{equation} Since $\eps$ is arbitrary we obtain the desired inequality. This finishes the proof. \begin{flushright}$\square$\end{flushright}

Denote by $\alpha_\rho=\sup\{\alpha\in\R_+^*:\xi\textrm{ is $\alpha$-H\"older}\}.$ From the proof one obtains the following.

\begin{prop}\label{prop:igualdad} Let $\rho:\G\to\PGL(d,\R)$ be an irreducible convex representation. \begin{itemize}\item[i)]If $\alpha_\rho h_\rho=\hX,$ then $\beta_\rho$ and $\alpha_\rho\bus_\G$ are cohomologous. \item[ii)] If $\alpha_\rho\h_\rho=\hX,$ then $\beta_\rho+\vo\beta_\rho$ and $2\alpha_\rho\bus_\G$ are cohomologous. \end{itemize}
\end{prop}

\begin{proof} Let us prove i), the other being completely analogous. If one has $\alpha_\rho h_\rho=\hX,$ then inequality (\ref{equation:desigualdad2}) implies $\LL(\beta_\rho)=\hX/h_\rho,$ and hence using Lemma \ref{lema:intersection}, there exists $\kappa>0$ such that, $\beta_\rho$ and $\kappa\bus_\G$ are cohomologous. Equality $\alpha_\rho h_\rho=\hX$ implies then $\alpha_\rho=\kappa.$
\end{proof}

\section{Proximal representations and the limit cone of Benoist}\label{section:reps}

We will freely use the notations of subsection \ref{subsection:hyper}. For an irreducible representation $\phi:G\to\PGL(d,\R),$ denote by $\chi_\phi\in\frak a^*$ its \emph{restricted highest weight}. For every $g\in G$ one has, by definition, \begin{equation}\label{poids}\lambda_1(\phi g)=\chi_\phi(\lambda(g)).
\end{equation} The representation $\phi$ is \emph{proximal} if there exists $g\in G$ such that $\phi(g)$ is a proximal matrix. One has the following standard proposition in Representation Theory.

\begin{prop}[see Benoist {\cite[Section 2.2]{convexes3}}]\label{prop:lattices} The set of restricted weights of $\frak a^*$ is in bijection with (equivalence classes of) irreducible proximal representations of $G.$
\end{prop}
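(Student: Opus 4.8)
The plan is to deduce the statement from Tits' classification of the irreducible linear representations of a reductive group, applied to the \emph{proximal} case, which is the form recalled in Benoist \cite[Section 2.2]{convexes3}.

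First I would attach to an irreducible representation $\phi:G\to\PGL(d,\R)$ its restricted highest weight $\chi_\phi\in\frak a^*$. Extending $\frak a$ to a Cartan subalgebra $\frak h$ of $\frak g_\C$ and fixing a compatible ordering, $\phi$ has a highest weight $\mu_\phi\in\frak h^*$ in the usual sense, and one sets $\chi_\phi$ to be its restriction to $\frak a$. This $\chi_\phi$ is dominant for $\frak a^+$, satisfies equation (\ref{poids}), and depends only on the equivalence class of $\phi$; thus $[\phi]\mapsto\chi_\phi$ is a well defined map from equivalence classes of irreducible representations to dominant elements of $\frak a^*$. The second ingredient is the elementary dictionary: for $\phi$ irreducible, $\phi$ is proximal if and only if the restricted weight space $V_{\chi_\phi}$ — the sum of the $\frak h$-weight spaces of $V$ whose restriction to $\frak a$ equals $\chi_\phi$ — is one-dimensional. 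Indeed, for $a$ in the open Weyl chamber the eigenvalue of $\phi(\exp a)$ of largest modulus is $e^{\chi_\phi(a)}$ with multiplicity $\dim V_{\chi_\phi}$, and a short argument with the Jordan decomposition reduces the existence of a proximal element in $\phi(G)$ to this multiplicity being $1$. It then remains to show that, restricted to the proximal classes, $[\phi]\mapsto\chi_\phi$ is a bijection onto the set of \emph{restricted weights} of $\frak a^*$ in the sense of \cite[Section 2.2]{convexes3}.

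For injectivity I would observe that when $\dim V_{\chi_\phi}=1$ the restricted weight space is a single $\frak h$-weight space, so $\mu_\phi$ is the \emph{unique} $\frak h$-weight of $\phi$ restricting to $\chi_\phi$ on $\frak a$; together with Tits' description of which dominant $\frak h$-weights occur for proximal representations — for such a representation $\mu_\phi$ is completely determined by its restriction $\chi_\phi$ — this forces $\mu_\phi$, hence $\phi$, to be determined by $\chi_\phi$. For surjectivity I would invoke the constructive half of Tits' theorem: for each simple root $\alpha\in\Pi$ there is a fundamental proximal irreducible representation with restricted highest weight $\chi_\alpha$, the restricted weights are exactly the admissible combinations $\chi=\sum_{\alpha\in\Pi}n_\alpha\chi_\alpha$ with $n_\alpha\in\N$, and for such a $\chi$ the subrepresentation generated by the product of the highest weight lines inside the corresponding tensor product of the fundamental representations is irreducible, proximal, and has restricted highest weight $\chi$.

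The step I expect to be the real obstacle is this injectivity input from Tits, namely that the full highest weight $\mu_\phi$ of a proximal irreducible representation is pinned down by its restriction $\chi_\phi$ to $\frak a$ (there is nothing to prove when $G$ is split, and this is precisely the point where the possible non-splitness of $G$ genuinely enters). Everything else — well-definedness of $\chi_\phi$, the multiplicity count for the top eigenvalue, and the tensor-product construction of proximal representations from the fundamental ones — is routine structure theory. A complete treatment is given in \cite[Section 2.2]{convexes3}.
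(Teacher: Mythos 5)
The paper itself offers no proof of this proposition: it is quoted as a standard fact from Section 2.2 of Benoist's work, which in turn rests on Tits' classification of irreducible proximal representations, and your sketch is exactly that standard argument — proximality is equivalent to one-dimensionality of the highest restricted weight space, injectivity is Tits' uniqueness statement (the only genuinely non-elementary input, which you correctly flag and attribute), and surjectivity comes from the Cartan product of Tits' fundamental proximal representations. So your proposal is correct and follows essentially the same route the paper relies on by citation.
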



Consider $\{\om_\a\}_{\a\in\Pi},$ the set of fundamental weights of $\Pi.$ We will need the following result of Tits \cite{tits}.
 
\begin{prop}[Tits \cite{tits}]\label{prop:titss} For each $\a\in\Pi,$ there exists a finite dimensional proximal irreducible representation $\L_\a:G\to\PGL(V_\a),$ such that the restricted highest weight $\chi_\a$ of $\L_\a$ is an integer multiple of $\om_\a.$ 
\end{prop}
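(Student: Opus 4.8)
The plan is to derive this from the highest-weight theory of the real semisimple group $G$, combined with a proximality criterion phrased through the restricted root system; this is in essence Tits' argument. To set up, I would fix a maximal torus $T\supset A=\exp\frak a$ of $G$ over $\C$, let $\Phi\subset\frak t^*$ be the absolute root system, and let $r\colon\frak t^*\to\frak a^*$ be restriction of linear forms; it maps $\Phi$ onto $\E\cup\{0\}$, and one chooses the absolute positive system compatibly with $\E^+$ so that $r(\Phi^+)\subset\E^+\cup\{0\}$. Put $\Phi_0=\{\beta\in\Phi:r(\beta)=0\}$, the root system of the centraliser of $A$, and let $\Delta$, $\Delta_0=\Delta\cap\Phi_0$ be the associated simple systems; then $r$ induces a surjection from $\Delta\setminus\Delta_0$ onto $\Pi$ whose fibres are the orbits of the $*$-action on the Dynkin diagram of $G$ (the Satake index). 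For $\a\in\Pi$ write $O_\a\subset\Delta\setminus\Delta_0$ for the corresponding orbit and $\varpi_\beta$ for the absolute fundamental weight attached to $\beta\in\Delta$. (Granting Proposition \ref{prop:lattices}, it is equivalent to produce, for each $\a$, a single irreducible proximal $\R$-representation of $G$ whose restricted highest weight lies on the ray $\R_{>0}\,\om_\a$; this is what the construction below does.)

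The first step is a proximality criterion: if $\phi\colon G\to\PGL(V)$ is irreducible with absolute highest weight $\mu$, then its restricted highest weight is $\chi=r(\mu)$, and $\phi$ is proximal if and only if the $\frak a$-weight space $V_\chi$ is one-dimensional, which holds if and only if $\langle\mu,\gamma^\vee\rangle=0$ for every $\gamma\in\Delta_0$. For the first equivalence: by the standard description of the eigenvalue moduli of $\phi(g)$ in terms of the weights of $V$ (compare (\ref{poids})), for generic $g\in G$, namely one with $\lambda(g)$ interior to $\frak a^+$ (such $g$ exist since $G$ is non-compact, e.g.\ $g=\exp(H)$ with $H$ in the open chamber), the largest modulus equals $e^{\chi(\lambda(g))}$, is attained with total multiplicity $\dim V_\chi$, and is not attained by any other weight; hence $\phi(g)$ is proximal exactly when $\dim V_\chi=1$. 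For the second equivalence: the absolute weights of $V$ restricting to $\chi$ are precisely those obtained from $\mu$ by subtracting non-negative integral combinations of roots in $\Phi_0^+$, i.e.\ the weights of the irreducible $\Phi_0$-submodule generated by a highest weight vector; this submodule is one-dimensional exactly when $\mu$ vanishes on every coroot $\gamma^\vee$, $\gamma\in\Delta_0$.

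With the criterion available, the construction is as follows. Put $\mu_\a=\sum_{\beta\in O_\a}\varpi_\beta$. Since the $*$-action permutes $O_\a$, the weight $\mu_\a$ is $*$-invariant, so after replacing it by a suitable positive integral multiple $m\mu_\a$ — to clear the Schur index and obtain an honest representation over $\R$ — the irreducible complex $G$-module of highest weight $m\mu_\a$ is defined over $\R$; let $\L_\a\colon G\to\PGL(V_\a)$ be the associated projective representation. Because each $\beta\in O_\a$ lies outside $\Delta_0$, one has $\langle m\mu_\a,\gamma^\vee\rangle=0$ for all $\gamma\in\Delta_0$, so $\L_\a$ is proximal by the criterion. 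Its restricted highest weight is $\chi_\a=r(m\mu_\a)$; from the compatibility of the orderings and the structure of the restricted root system, $\langle\chi_\a,(\a')^\vee\rangle$ vanishes for every simple restricted root $\a'\neq\a$ and is positive for $\a'=\a$, so $\chi_\a$ lies on the ray $\R_{>0}\,\om_\a$; being a restriction of an integral weight, after possibly enlarging $m$ once more it is a positive integer multiple of $\om_\a$, as required.

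The hard part — the reason this is a theorem of Tits and not an exercise — is exactly the two points glossed over above: controlling the Schur index and the real-versus-quaternionic type of the complex module of highest weight $\mu_\a$, so that one genuinely descends to an irreducible $\R$-representation of $G$; and the combinatorics of the Satake diagram that guarantee the orbit sum $\sum_{\beta\in O_\a}\varpi_\beta$ restricts onto the ray spanned by $\om_\a$ rather than onto a larger face of the restricted dominant cone. Once these are granted, the remaining steps are soft.
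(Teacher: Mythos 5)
The paper itself offers no proof of this Proposition --- it is imported verbatim from Tits' paper on irreducible representations of reductive groups over arbitrary fields --- so there is no internal argument to compare against; what you have written is, in outline, the standard proof of Tits' result as it is usually presented for real groups (e.g.\ in Benoist's work on asymptotic properties of linear groups), and the outline is sound: the proximality criterion via one-dimensionality of the top restricted weight space, its translation into the vanishing of $\langle\mu,\gamma^\vee\rangle$ for $\gamma\in\Delta_0$ through the Levi-truncation of the highest weight module, and the choice $\mu_\a=\sum_{\beta\in O_\a}\varpi_\beta$ over the $*$-orbit lying above the simple restricted root are all correct. Be aware, however, that the two points you explicitly defer are not peripheral polish but the actual content of the theorem: (i) the descent to $\R$ is what guarantees proximality survives --- if the module of highest weight $\mu_\a$ were of quaternionic type, the real form would have complexification $V_{\mu_\a}\oplus V_{\mu_\a}$ and the top eigenvalue would have multiplicity two, so passing to a suitable multiple $m\mu_\a$ (killing the Schur index over $\R$) is essential and needs the indicator computation; and (ii) the fact that $r\bigl(\sum_{\beta\in O_\a}\varpi_\beta\bigr)$ lands on the ray $\R_{>0}\,\om_\a$, i.e.\ is orthogonal to every restricted simple root other than $\a$, requires the identification of $r$ with the orthogonal projection onto $\frak a^*$ and the Satake-diagram combinatorics. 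As a self-contained proof your text is therefore incomplete at exactly the places Tits' paper is needed, but as a faithful account of the structure of his argument, and of why the Proposition holds, it is accurate.
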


We will now specialize to the group $\isom_+\H^k.$ The Cartan subspace $\frak a_{\H^k}$ is 1-dimensional and is thus identified with $\R.$ The \emph{Jordan projection} of $\g\in\isom_+\H^k$ is $$\lambda_{\H^k}(\g)=\inf_{p\in\H^k}d_{\H^k}(p,\g p),$$ which coincides with the translation length $|\g|,$ when $\g$ is a hyperbolic element.

\begin{obs}\label{obs:klein} If $\rho:\isom_+ \H^k\to\PGL(k+1,\R)$ is the Klein model of $\H^k$ and $\g\in\isom_+\H^k$ is hyperbolic then $\lambda_1(\rho\g)=|\g|$ and $\lambda_1(\Ad\rho\g)=2|\g|.$
\end{obs}

\subsection{The limit cone of Benoist}

Let $\grupo$ be a subgroup of $G.$ The \emph{limit cone} of $\grupo$ is the closed cone of $\frak a^+$ generated by $$\{\lambda(g):g\in\grupo\}$$ and we denote it by $\cone_\grupo.$ One has the following theorem of Benoist \cite{limite}.

\begin{teo}[Benoist \cite{limite}]\label{teo:conoBenoist} Let $\grupo$ be a Zariski-dense discrete subgroup of $G,$ then $\cone_\grupo$ has non-empty interior.
\end{teo}

Let $G_i$ $i=1,2$ be center free real-algebraic semisimple Lie groups without compact factors, and denote by $\frak a_{G_i}$ a Cartan subspace of $G_i.$ The main purpose of this section is the following corollary personally communicated by Quint. 

\begin{cor}[Quint]\label{cor:grosso} Let $\rho:\grupo\to G_1$ and $\eta:\grupo\to G_2$ be Zariski-dense. Assume there exist $\varphi_1\in(\frak a_{G_1}^+)^*$ and $\varphi_2\in(\frak a_{G_2}^+)^*$ such that for all $g\in\grupo$ one has $$\varphi_1(\lambda_{G_1}(\rho g))=\varphi_2(\lambda_{G_2}(\eta g)).$$ Then $\eta\circ\rho^{-1}:\grupo\to\grupo$ extends to an isomorphism $G_1\to G_2.$
\end{cor}

\begin{proof} Let $H$ be the Zariski closure of the product representation $\rho\times\eta:\grupo\to G_1\times G_2,$ defined by $g\mapsto (\rho g,\eta g).$ Since the equation \begin{equation}\label{radio}\varphi_1(\lambda_{G_1} (g_1))=\varphi_2(\lambda_{G_2} (g_2))\end{equation} holds for every pair $(g_1,g_2)\in\rho\times\eta\,(\grupo),$ Benoist's \cite{limite} Theorem \ref{teo:conoBenoist} implies that the same relation holds for every pair $(g_1,g_2)\in H.$

The group $H\cap (G_1\times\{e\})$ is a normal subgroup of $G_1,$ it is hence (up to finite index) a product of simple factors. Equation (\ref{radio}) implies that for all $(g,e)\in H\cap (G_\rho\times\{e\})$ necessarily one has $\varphi_1(\lambda_{G_1} g)=0.$ Since $\varphi_1(v)>0$ for all $v\in\frak a_{G_1}^+-\{0\},$ one has $\lambda_{G_1}(g)=0.$ This implies that $H\cap (G_1\times\{e\})$ is a normal compact subgroup of $G_1.$ Since $G_1$ does not have compact factors and is center free one concludes that $H\cap (G_\rho\times{e})=\{e\}.$

The same argument implies that $H\cap(\{e\}\times G_2)=\{e\}$ and hence $H$ is the graph of an isomorphism extending $\eta\rho^{-1}.$
\end{proof}

We will need the following lemma.

\begin{lema}[Quint]\label{lema:factorescompactos}Let $\grupo$ be a subgroup of $\PGL(d,\R)$ acting irreducibly on $\R^d$ and with a proximal element. Then the Zariski closure of $\grupo$ is a center free semisimple Lie group without compact factors.
\end{lema}

\begin{proof} Assume that $g\in\PGL(d,\R)$ commutes with all elements on $\grupo,$ and let $\g\in\grupo$ be proximal. The attractor of $\g$ is fixed by $g$ and hence $gv=av$ for some $a\in\R$ and all $v\in\g_+.$ One easily sees that if $h\in\grupo$ is another proximal element of $\grupo$ then necessarily $gw=aw$ for $w\in h_+.$ Thus, $g$ acts as an homothethy on the vector space spanned by the attracting lines of proximal elements of $\grupo.$ Since $\grupo$ acts irreducibly this vector space is $\R^d.$ The Zariski closure $G$ of $\grupo$ is hence center free. 

Since $\grupo$ acts irreducibly so does $G,$ hence $G$ is a center free reductive Lie group, i.e.  a semisimple Lie group without center. 

Let $K$ be the maximal normal connected compact subgroup of $G,$ and let $H$ be the product of the non-compact Zariski connected, simple factors of $G.$ Then $H$ and $K$ commute and $HK$ has finite index in $G.$

Consider now a proximal element $g\in G.$ Replacing $g$ by a large enough power, we can assume that $g=hk$ for some $h\in H$ and $k\in K.$ Since eigenvalues of $k$ have modulus $1$ and $k$ and $h$ commute, we conclude that $h$ is proximal. So we can assume that $g\in H.$ 

Since $g$ and $K$ commute, the attracting line of $g$ is fixed by $K,$ and, since $K$ is connected, each vector of this attracting line is fixed by $K.$ Let $W$ be the vector space of $K$-fixed vectors on $\R^d,$ then $W$ is $G$-invariant ($K$ is normal in $G$) and nonzero. Since $G$ is irreducible on obtains $W=\R^d$ and $K=\{e\}.$
\end{proof}

\section{Hyperconvex representations and Theorem C}

Recall that $\G$ is a convex cocompact isometry group of a $\CAT(-1)$ space. We will freely use the notations of section \ref{section:reps}. Let $G$ be a real non-compact semi-simple Lie group, and denote by $\scr F$ the Furstenberg boundary of the symmetric space of $G.$ The product $\scr F\times\scr F$ has a unique open $G$-orbit, denoted by $\posgen.$

\begin{defi}A representation $\rho:\G\to G$ is \emph{hyperconvex} if there exists a $\rho$-equivariant H\"older-continuous map $\z:\bord\G\to\scr F$ such that if $x\neq y$ are distinct points in $\bord\G,$ then the pair $(\z(x),\z(y))$ belongs to $\posgen.$
\end{defi}

The following lemma relates hyperconvex representations to convex ones.

\begin{lema}\label{obs:remark} If $\rho:\G\to G$ is Zariski-dense and hyperconvex, and $\L:G\to\PGL(V)$ is a finite dimensional irreducible proximal representation, then the composition $\L\circ\rho:\G\to\PGL(V)$ is irreducible and convex.
\end{lema}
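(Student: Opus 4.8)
The plan is to exhibit the two required equivariant maps into a projective space and its dual and verify the transversality condition from the definition of a convex representation. First I would recall Tits' Proposition \ref{prop:titss} (or more directly Proposition \ref{prop:lattices}): since $\L:G\to\PGL(V)$ is irreducible and proximal, it has a restricted highest weight $\chi$, and $V$ decomposes under $\frak a$ into weight spaces, the highest one $V_\chi$ being a line. The stabilizer in $G$ of this highest weight line contains the minimal parabolic $P$, so we get a $G$-equivariant algebraic map $\jmath:\scr F=G/P\to\P(V)$ sending the base point $P$ to $[V_\chi]$; dually, using the lowest weight line (equivalently $\L\circ\ii$ applied to the highest weight of the contragredient), we get $\jmath^*:\scr F\to\P(V^*)$. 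Composing with the hyperconvex boundary map $\z$ gives $\xi:=\jmath\circ\z:\bord\G\to\P(V)$ and $\xi^*:=\jmath^*\circ\z:\bord\G\to\P(V^*)$, which are $\L\circ\rho$-equivariant and H\"older-continuous (being compositions of the H\"older map $\z$ with the smooth maps $\jmath,\jmath^*$).

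Next I would check transversality. For $x\neq y$ in $\bord\G$ the pair $(\z(x),\z(y))$ lies in the unique open $G$-orbit $\posgen$, i.e.\ $\z(x)$ and $\z(y)$ are in general position; since $G$ acts transitively on $\posgen$, it suffices to verify $\xi(x)\oplus\ker\xi^*(y)=V$ for one such pair, e.g.\ $\z(x)=P$ (highest weight flag) and $\z(y)=P^-$ (the opposite flag, stabilized by the opposite parabolic $P^-$). For these, $\xi(x)$ is the highest weight line and $\ker\xi^*(y)$ is the sum of all weight spaces other than the lowest; since $\L$ is proximal the highest weight space is a line and is not the lowest weight space, so it is a complement of $\ker\xi^*(y)$. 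Equivariance then transports this to every transverse pair, giving the convexity condition $\xi(x)\oplus\ker\xi^*(y)=V$ whenever $x\neq y$.

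Finally, irreducibility of $\L\circ\rho$ follows from Lemma \ref{lema:irreducible}: a convex representation automatically acts irreducibly on $\<\xi(\bord\G)\>$, so it remains to see $\<\xi(\bord\G)\>=V$. Here is where Zariski density of $\rho$ enters: the image $\z(\bord\G)$ is not contained in any proper $\rho\G$-invariant subvariety of $\scr F$, and because $\rho\G$ is Zariski dense in $G$, the span of $\jmath(\z(\bord\G))$ is a $\rho\G$-invariant — hence $G$-invariant — subspace of $V$; by irreducibility of $\L$ it is all of $V$. (Concretely one can argue as in Lemma \ref{lema:irreducible}: limits $\rho\g^n\xi(z)$ along a proximal sequence produce enough vectors, then Zariski density upgrades $\rho\G$-invariance of the span to $G$-invariance.) Combining, $\L\circ\rho$ is an irreducible convex representation.

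\textbf{Main obstacle.} The routine part is the construction of $\jmath,\jmath^*$ and the one-orbit transversality check; the step that needs a little care is showing $\<\xi(\bord\G)\>=V$, i.e.\ correctly invoking Zariski density of $\rho$ to pass from $\rho\G$-invariance to $G$-invariance of the span, so that Lemma \ref{lema:irreducible}'s conclusion actually lands on the full space $V$ rather than a proper subspace. Everything else is bookkeeping with weights.
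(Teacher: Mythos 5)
Your overall strategy is exactly the paper's (the paper's proof is essentially: $\L$ induces a smooth equivariant map $\scr F\to\P(V)$, the dual representation $\L^*$ induces one to $\P(V^*)$, compose with $\z$, and the rest is direct), and your treatment of H\"older continuity, of the reduction of transversality to a single model pair via transitivity of $G$ on $\posgen$, and of irreducibility via Zariski density are all fine. In fact for irreducibility you can say it even more directly: any $\L\rho(\G)$-invariant subspace is invariant under the Zariski closure $\L(G)$, hence is $0$ or $V$ by irreducibility of $\L$; the detour through Lemma \ref{lema:irreducible} is unnecessary.

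However, the explicit transversality check at the model pair is wrong as written. With the standard construction $\jmath^*(gP)=\L^*(g)\cdot[\text{highest weight line of }V^*]$, the point $\jmath^*$ assigns to the \emph{base} flag is the line of functionals vanishing off the lowest weight space, so ``sum of all weight spaces other than the lowest'' is $\ker\xi^*(x)$, not $\ker\xi^*(y)$; evaluating at the \emph{opposite} flag $\z(y)=P^-$ gives instead the line of functionals vanishing off the highest weight space, i.e.\ $\ker\xi^*(y)=\bigoplus_{\mu\neq\chi}V_\mu$. Your inference ``the highest weight space is a line and is not the lowest weight space, so it is a complement of the sum of all weight spaces other than the lowest'' is a non sequitur: whenever $V$ has more than two weights, the highest weight line is itself one of the weight spaces other than the lowest, hence is \emph{contained} in that sum and cannot complement it. (What your computation actually verifies is the containment $\xi(x)\subset\ker\xi^*(x)$, the analogue of Lemma \ref{lema:loxodromic}, not transversality.) The repair is immediate: at the pair (base flag, opposite flag) one has $\xi(x)=V_\chi$ and $\ker\xi^*(y)=\bigoplus_{\mu\neq\chi}V_\mu$, and these are complementary precisely because proximality makes the $\chi$-weight space a line; equivariance and transitivity on $\posgen$ then give $\xi(x)\oplus\ker\xi^*(y)=V$ for all $x\neq y$, as you intended.
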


\begin{proof} A proximal representation $\L:G\to\PGL(V)$ induces a  $\clase^\infty$ equivariant map $\scr F\to\P(V).$ Considering the dual representation $\L^*:G\to\PGL(V^*)$ one obtains another equivariant map $\scr F\to\PGL(V^*).$ The remainder of the statement follows directly.
\end{proof}





We need the following theorem from \cite{quantitative}. 

\begin{teo}[{\cite[Section 7]{quantitative}}]\label{teo:cartan} Let $\rho:\G\to G$ be a Zariski-dense hyperconvex representation, then there exists a (vector valued) H\"older cocycle $\beta:\G\times\bord\G\to\frak a$ such that, for every non torsion conjugacy class $[\g]\in[\G]$ one has, $\beta(\g,\g_+)=\lambda(\rho\g).$ If $\varphi\in\frak a^*$ is such that $\varphi|\frak a^+-\{0\}>0,$ then the H\"older cocycle $\beta^\varphi=\varphi\circ\beta$ has finite and positive entropy.
\end{teo}

Assume from now on that $\rho:\G\to G$ is a Zariski-dense hyperconvex representation, and assume that $\z:\Lim_\G\to\scr F$ is $\alpha$-H\"older.

\begin{lema}\label{lema:todo} For every simple root $\a\in\Pi,$ and every non-torsion $\g\in\G,$ one has $$\alpha\leq \frac{\a(\lambda(\rho\g))}{|\g|}.$$ 
\end{lema}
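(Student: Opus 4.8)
The plan is to mimic the proof of Lemma \ref{lema:1}, replacing the single projective embedding $\xi$ by the flag embedding $\z$ and using the fact that for each simple root $\a\in\Pi$ the map $\z$ descends to a map into $\P(V_\a)$ via the Tits representation $\L_\a:G\to\PGL(V_\a)$ of Proposition \ref{prop:titss}. First I would fix a simple root $\a\in\Pi$ and a non torsion element $\g\in\G$. Since $\rho$ is hyperconvex, the composition $\xi_\a:=(\L_\a\circ\rho\text{-equivariant map})$ obtained from $\z$ by projecting the $\a$-component of the flag into $\P(V_\a)$ is, by Lemma \ref{obs:remark}, the equivariant map of an irreducible convex representation $\L_\a\circ\rho:\G\to\PGL(V_\a)$; moreover $\z$ being $\alpha$-H\"older and the map $\scr F\to\P(V_\a)$ being $\clase^\infty$ (hence locally Lipschitz) imply that $\xi_\a$ is $\alpha$-H\"older as well.

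Next, by Lemma \ref{lema:loxodromic} applied to the convex irreducible representation $\L_\a\circ\rho$, the element $\L_\a(\rho\g)$ is proximal with attractor $\xi_\a(\g_+)$, and its top log-eigenvalue is $\chi_\a(\lambda(\rho\g))$ where $\chi_\a$ is the restricted highest weight of $\L_\a$, an integer multiple $n_\a\om_\a$ of the fundamental weight. Then I would run exactly the argument of Lemma \ref{lema:1}: choose $x\in\bord\G-\{\g_-\}$ whose image $\xi_\a(x)$ has nonzero component in the $\lambda_2$-characteristic space of $\L_\a(\rho\g)$ (possible by irreducibility of $\L_\a\circ\rho$), apply Benoist's Lemma \ref{lema:lambda2} to get
$$\lambda_2(\L_\a(\rho\g))-\lambda_1(\L_\a(\rho\g))=\lim_{n\to\infty}\frac{\log d_\P(\xi_\a(\g^n x),\xi_\a(\g_+))}n,$$
bound the right-hand side using $\alpha$-H\"older continuity of $\xi_\a$ and Lemma \ref{lema:lambda2hyp} to obtain that this limit is $\le -\alpha|\g|$, hence
$$\alpha|\g|\leq \lambda_1(\L_\a(\rho\g))-\lambda_2(\L_\a(\rho\g)).$$

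The final step is to identify the gap $\lambda_1(\L_\a(\rho\g))-\lambda_2(\L_\a(\rho\g))$ with $\a(\lambda(\rho\g))$. This is the representation-theoretic heart of the argument: since $\chi_\a=n_\a\om_\a$ is dominant and $\a$ is the unique simple root whose coefficient in $\om_\a$ is nonzero in the sense that $\om_\a-\a$ is still a weight, the second highest weight of $\L_\a$ is precisely $\chi_\a-\a$, so $\lambda_2(\L_\a(g))=\chi_\a(\lambda(g))-\a(\lambda(g))$ for $\R$-regular $g$, giving $\lambda_1-\lambda_2=\a(\lambda(\rho\g))$ on the nose (the normalization $n_\a$ cancels in the difference). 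I expect this last identification — checking that $\chi_\a-\a$ is indeed a weight of $\L_\a$ with the correct multiplicity-one behaviour, uniformly over the walls — to be the main obstacle, though it is standard highest-weight theory and is presumably recorded elsewhere in the paper (near Propositions \ref{prop:lattices} and \ref{prop:titss}). Combining the two displayed inequalities yields $\alpha\le \a(\lambda(\rho\g))/|\g|$ for the chosen $\a$ and $\g$, and since both were arbitrary this is the claim.
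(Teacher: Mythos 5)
Your proposal follows essentially the same route as the paper: compose $\rho$ with the Tits representation $\L_\a$ of Proposition \ref{prop:titss}, note via Lemma \ref{obs:remark} that $\L_\a\circ\rho$ is irreducible and convex with an $\alpha$-H\"older equivariant map (automatic here since the metric on $\scr F$ is induced by the embedding into $\prod_\a\P(V_\a)$), use the standard weight-gap identity $\a(\lambda(\rho\g))=\lambda_1(\L_\a\circ\rho\,\g)-\lambda_2(\L_\a\circ\rho\,\g)$, and conclude by the argument of Lemma \ref{lema:1}. The paper simply cites Lemma \ref{lema:1} and asserts the weight identity, whereas you replay the Benoist-type argument and sketch the highest-weight justification, but the content is the same.
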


\begin{proof}Let $\L_\a\circ\rho:\G\to\PGL(V_\a)$ be the irreducible convex representation given by Tits's Proposition \ref{prop:titss} and Lemma \ref{obs:remark}. One then has $$\a(\lambda(\rho\g))= \lambda_1( \L_\a\circ\rho\g)- \lambda_2(\L_\a\circ\rho\g).$$ The lemma follows from Lemma \ref{lema:1}.
\end{proof}

\subsection{Proof of Theorem C}

The proof is very similar to the proof of Theorem A. Consider the cocycle $\beta:\G\times\bord\G\to\frak a$ given by Theorem \ref{teo:cartan}, and consider $\varphi\in\frak a^*$ such that $\varphi|\frak a^+-\{0\}>0.$ Consider the H\"older cocycle $\beta^\varphi=\varphi\circ\beta.$ Theorem \ref{teo:cartan} states that $h_{\beta^\varphi}=h_\varphi$ is finite and positive. Hence, Lemma \ref{lema:intersection} applies to the cocycle $\beta^\varphi$ and one obtains a sequence $\{\g_n\}$ in $\G$ such that $$\frac{\varphi(\lambda(\rho
\g_n))}{|\g_n|}\to\LL(\beta^\varphi)\leq\frac{\hX}{h_\varphi}.$$ Analogous reasoning to Theorem A, together with Lemma \ref{lema:todo}, yields $$\frac{\alpha h_\varphi}{\hX}\leq\frac{\alpha}{\LL(\beta^\varphi)}\leq \frac{\a(\lambda(\rho\g_n))}{\varphi(\lambda(\rho\g_n))}(1+\eps),$$ for every simple root $\a\in\Pi,$ and all big enough $n.$ We now try to maximize the function $\sf V:\P(\frak a^+)\to\R$ defined by $$\sf V(a)=\min_{\a\in\Pi}\left\{\frac{\a(a)}{\varphi(a)}\right\}.$$

We need the following standard Linear Algebra lemma. Consider an $n$-dimensional vector space $W,$ a $k$-\emph{simplex} is the convex hull of $k+1$ points $\{x_0,\ldots, x_k\}$ in $W$ such that for every $i\in\{0,\ldots,k\}$ the set $\{x_0,\ldots,x_k\}-\{x_i\}$ is linearly independent.

\begin{lema}\label{lema:bari} Consider $n+1$ affine linear forms $\varphi_i:W\to\R$ on an $n$-dimensional vector space $V,$ such that $$\Delta=\bigcap_0^n \{v\in W: \varphi_i(v)\geq0\}$$ is an $n$-dimensional simplex. Then $$\max_{v\in\Delta}\min\{\varphi_i(v):i\in\{0,\ldots,n\}\},$$ is given in the point all the $\varphi_i$'s coincide, i.e. in the unique $v\in\Delta$ such that $$\varphi_0(v)= \varphi_1(v)=\cdots= \varphi_n(v).$$
\end{lema}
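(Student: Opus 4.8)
The plan is to recognise each $\varphi_i$ as a positive multiple of a barycentric coordinate function of $\Delta$; once this is done the maximisation reduces to a one-line inequality.

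\emph{First} I would pin down the combinatorics. Since $\Delta$ is a bounded $n$-dimensional simplex cut out by the $n+1$ half-spaces $\{\varphi_i\ge 0\}$, none of them is redundant: were $\{\varphi_i\ge 0\}$ superfluous, $\Delta$ would coincide with the intersection of only $n$ half-spaces of $W\cong\R^n$, whose recession cone is non-trivial (fewer than $n+1$ linear functionals cannot positively span $W^*$), so $\Delta$ would be unbounded — a contradiction. Hence each hyperplane $\{\varphi_i=0\}$ supports a facet $F_i:=\Delta\cap\{\varphi_i=0\}$, and $F_0,\dots,F_n$ are precisely the $n+1$ facets of $\Delta$. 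A facet of an $n$-simplex is the convex hull of all but one of the $n+1$ vertices, and each vertex lies on exactly $n$ of the facets, so there is a unique vertex $v_i\notin F_i$ and $i\mapsto v_i$ is a bijection onto the vertex set. By construction $\varphi_i(v_i)>0$ (since $v_i\in\Delta\subset\{\varphi_i\ge 0\}$ but $v_i\notin F_i$) while $\varphi_i(v_j)=0$ for $j\ne i$ (since then $v_j\in F_i$). Letting $\mu_0,\dots,\mu_n$ be the barycentric coordinates of the affinely independent points $v_0,\dots,v_n$ (the affine functions with $\mu_i(v_j)=\delta_{ij}$, $\sum_i\mu_i\equiv 1$, so that $\Delta=\{v:\mu_i(v)\ge 0\ \forall i\}$), the affine functions $\varphi_i$ and $\lambda_i\mu_i$ with $\lambda_i:=\varphi_i(v_i)>0$ agree at the $n+1$ affinely independent points $v_0,\dots,v_n$, hence coincide; so $\varphi_i=\lambda_i\mu_i$ for all $i$.

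\emph{Then} the maximisation is immediate: with $x_i:=\mu_i(v)\ge 0$ and $\sum_i x_i=1$, I must maximise $\min_i\lambda_i x_i$. If $\min_i\lambda_i x_i=m$ then $x_i\ge m/\lambda_i$ for all $i$, whence $1=\sum_i x_i\ge m\sum_i\lambda_i^{-1}$, i.e.\ $m\le t_0:=\big(\sum_i\lambda_i^{-1}\big)^{-1}$, with equality forcing $x_i=t_0/\lambda_i$ for every $i$; and $x_i:=t_0\lambda_i^{-1}$ is an admissible choice realising $\lambda_i x_i=t_0$ for all $i$. Back on $\Delta$ this says $\max_{v\in\Delta}\min_i\varphi_i(v)=t_0$, attained exactly at the point $v^\star$ with $\mu_i(v^\star)=t_0/\lambda_i$, which is the unique point where $\varphi_0=\dots=\varphi_n$ (all equal to $t_0$) and which lies in the interior of $\Delta$ — exactly the assertion of the lemma.

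The only delicate point is the first step, namely matching the $n+1$ defining functionals with the $n+1$ facets (equivalently, vertices) of $\Delta$; this is routine polytope combinatorics, and once the identification $\varphi_i=\lambda_i\mu_i$ is in hand the rest is just the elementary bound $1\ge m\sum_i\lambda_i^{-1}$.
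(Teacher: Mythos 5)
Your proof is correct and complete. The paper itself offers no argument here (it simply declares the lemma trivial), so there is no official proof to compare against; your reduction to barycentric coordinates --- showing each $\varphi_i$ equals $\lambda_i\mu_i$ with $\lambda_i=\varphi_i(v_i)>0$, and then the one-line bound $1=\sum_i\mu_i(v)\geq m\sum_i\lambda_i^{-1}$ with its equality case --- is exactly the kind of routine verification the author is waving away, and it correctly yields existence, uniqueness and location of the maximizer at the point where all the $\varphi_i$ coincide. The only step you rightly flag as needing care, the irredundancy of the $n+1$ half-spaces and the bijection between functionals and facets of the simplex, is handled adequately by your boundedness/recession-cone remark.
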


Fix a vector $v$ in the interior of  $\frak a^+$ such that $\varphi(v)\neq0,$ and consider the map $T:\ker\varphi\to \P(\frak a)$ defined by $w\mapsto \R(v+w).$ This map identifies $\ker\varphi$ with $\P(\frak a)-\P(\ker\varphi).$ The functions $T_\t:\ker\varphi\to\R,$ given by $$T_\a(w)=\frac{\a(w+v)}{\varphi(w+v)}=\frac{\a(v)}{\varphi(v)}+\frac{\a(w)}{\varphi(v)},$$ are affine functionals. Since $\varphi$ is positive on the Weyl chamber $\frak a^+-\{0\},$ we get that $$\Delta=T^{-1}(\P(\frak a^+))=T^{-1}(\P(\bigcap_{\a\in\Pi} \{\a\geq0\}))= \bigcap_{\a\in\Pi}\{T_\a\geq0\}$$ is a simplex of dimension $\dim \frak a-1=\dim\ker\varphi.$

Remark that ${\sf{V}}\circ T=\min\{T_\a:\a\in\Pi\}.$ Hence Lemma \ref{lema:bari} implies that the maximum of ${\sf{V}}\circ T|\Delta$ is realized where all the functions $\{T_\a:\a\in\Pi\}$ coincide, i.e. in the set $$\{a\in\frak a^+: \a_1(a)=\a_2(a)\textrm{ for every pair $\a_1,\a_2\in\Pi$}\}.$$ This is exactly the barycenter of the Weyl chamber $\sf{bar}_{\frak a^+}.$ 

Hence \begin{equation}\label{equation:igualdadC}\frac{\alpha h_\varphi}{\hX}\leq\frac{\alpha}{\LL(\beta^\varphi)}\leq {\sf{V}}(\lambda(\rho\g_n))(1+\eps)\leq \frac{\a(\sf{bar}_{\frak a^+})}{\varphi(\sf{bar}_{\frak a^+})}(1+\eps).\end{equation}

This shows the desired inequality.

\begin{obs}\label{obs:equalC} As in Theorem A, remark that equality in equation (\ref{equation:igualdadC}) implies that there exists $\kappa>0$ such that $\beta^\varphi$ and $\kappa\bus_\G$ are cohomologous.
\end{obs}

\section{Proof of rigidity statements}\label{section:Aigualdad}

Let's prove Theorem B (Corollary \ref{cor:hilbert} and Theorem D are completely analogous). Assume $\rho:\G\to\PGL(d,\R)$ is a convex representation such that $\alpha_\rho h_\rho=\hX.$ Proposition \ref{prop:igualdad} implies that for all $\g\in\G$ one has $$\lambda_1(\rho\g)=\alpha_\rho |\g|.$$ Since $\rho\G$ is irreducible and proximal, and $\G$ is Zariski-dense in $\isom_+\H^k,$ Lemma \ref{lema:factorescompactos} and Corollary \ref{cor:grosso} imply that $\rho$ extends to $\vo\rho:\isom_+\H^k\to\PGL(d,\R).$ Hence, the equivariant map $\xi$ is the restriction of the $\clase^\infty,$ $\vo\rho$-equivariant map $\vo\xi:\bord\H^k\to\P(\R^d).$ Thus, $\xi$ is Lipschitz, i.e. $\alpha_\rho=1.$ Proposition \ref{prop:lattices} together with Remark \ref{obs:klein} imply that $\vo\rho$ is the Klein model of $\H^k.$

\section{Proof of Corollary \ref{cors:hitchincor}}

We will now prove the following corollary. Recall that $\E$ is a closed oriented hyperbolic surface.

\begin{cor2} Let ${\sf{f}}:\pi_1\E\to\PSL(2,\R)$ be a hyperbolization of $\E,$ and consider a representation in the Hitchin component $\rho:\pi_1\E\to\PSL(d,\R).$ Denote by $\alpha$ the best H\"older exponent of the equivariant map $\z:\bord\H^2\to\scr F.$ Then $$\alpha h_\rho \leq \frac2{d-1}\textrm{ and }\alpha \h_\rho \leq \frac2{d-1}.$$ Either equality holds only if $\rho=\tau_d\circ {\sf{f}},$ where $\tau_d:\PSL(2,\R)\to\PSL(d,\R)$ is the irreducible representation.
\end{cor2}

\begin{proof} Denote by $G$ the Zariski closure of $\rho,$ since $G$ is a semisimple Lie group without compact factors $\rho:\pi_1\E\to G$ is again hyperconvex. Consider $\frak a$ a Cartan subspace of $\frak g,$ and let $\chi\in\frak a^*$ be the restricted highest weight of the (irreducible proximal) representation $G\subset \PSL(d,\R),$ i.e. if $g\in G$ then $\chi(\lambda(g))=\lambda_1(g).$ Denote by $\ii:\frak a\to\frak a$ the opposition involution of $\frak a$ associated to the choice of $\frak a^+.$

Remark that by definition the entropy of $\rho$ relative to $\chi$ is the spectral entropy $h_\rho=h_\chi$ of $\rho,$ and the entropy of $\rho$ relative to $$\varphi=\frac{\chi+\chi\circ\ii}2$$ is the Hilbert entropy $\h_\rho=h_\varphi$ of $\rho.$ We will prove the corollary for the spectral entropy, the other being completely analogous.

Theorem C asserts that \begin{equation}\label{equation:algo}\alpha h_\rho\leq \frac{\t({\sf{bar}}_{\frak a^+})}{\chi({\sf{bar}}_{\frak a^+})}\end{equation} for any simple root $\t\in\Pi$ of $\frak a$ and where $\sf{bar}_{\frak{a^+}}$ is the barycenter of the Weyl chamber $\frak a^+.$ Theorem D implies that equality in (\ref{equation:algo}) can only hold if $G$ is isomorphic to $\PSL(2,\R).$ 

Guichard's Theorem gives a finite list of possible groups $G,$ i.e. of possible Zariski closures of $\rho(\pi_1\E).$ We will finish with an explicit computation showing that in all possible cases one has $$\frac{\t({\sf{bar}}_{\frak a^+})}{\chi({\sf{bar}}_{\frak a^+})}=\frac2{d-1}.$$

The author would like to thank Olivier Guichard for discussions concerning his work.

\begin{teo}[Guichard \cite{clausura}] \label{Zariski-Guichard}
Let $\rho:\pi_1\E\to\SL(d,\R)$ be the lift of a representation in the Hitchin component, then the Zariski closure $\vo{\rho^Z}$ is either conjugate to $\tau_d(\SL(2,\R)),$ $\SL(d,\R)$ or conjugate to one of the following groups:

\begin{itemize}
\item[-] $\Sp(2n,\R)$ if $d=2n,$ 
\item[-] $\SO(n,n+1)$ if $d=2n+1,$
\item[-] $\Ge$ or $\SO(3,4)$ if $d=7.$
\end{itemize}
\end{teo}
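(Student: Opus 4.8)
The plan is to identify the Zariski closure $H=\vo{\rho^Z}$ by combining the geometry of Labourie's limit curve with the representation theory of the principal $\frak{sl}_2$, and then to finish with a finite check over Cartan types. First I would set up the structure. Since $\pi_1\E$ is torsion free and, for a Hitchin representation, each $\rho(\g)$ is $\R$-regular with $d$ distinct positive real eigenvalues (Labourie), every $\rho(\g)$ generates a connected ($\R$-split) torus and hence lies in the identity component $G:=H^\circ$, so $H=G$ is connected. Representations in the Hitchin component are irreducible (Hitchin), so a proper $G$-invariant subspace of $\R^d$ would be $H$-invariant, which is impossible; thus $G$ acts irreducibly on $\R^d$, its radical acts by scalars (Schur) and, inside $\SL(d,\R)$, must be trivial, so $G$ is semisimple. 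Finally $\rho$ is hyperconvex (Labourie), i.e.\ $\z$ sends distinct points of $\bord\pi_1\E$ to transverse complete flags of $\R^d$; fixing an $\R$-regular $\g$ and diagonalising $\rho(\g)$, the intersection of $\frak g=\mathrm{Lie}(G)$ with the diagonal traceless matrices is a maximal $\R$-split Cartan subspace $\frak a$, and the standard representation $V=\R^d$ of $G$ is weight-multiplicity-free with exactly $d$ distinct weights, totally ordered by the chamber containing $\log\rho(\g)$.

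The crux is to exhibit a principal $\frak{sl}_2$-triple of $\frak g$ acting on $V=\R^d$ as the irreducible $d$-dimensional $\frak{sl}_2$-module. Here I would use the full Frenet structure of $\z$ (Labourie): the flags $\z(x)$ osculate, $\z^k(x)=\lim_{y_i\to x}\bigoplus_{i\le k}\z_1(y_i)$ over distinct $y_1,\dots,y_k$. Since $G$ is $\R$-split, $\z$ factors through the flag variety $\scr F_G$ of $G$ embedded in the full flag variety of $\R^d$; reading $\z$ near the attracting flag $\z(\g_+)$ of a loxodromic $\g$ in the big-cell chart determined by $\z(\g_-)$ presents it as a germ through the origin in the unipotent radical $N^-_G\subseteq N^-_{\SL(d,\R)}$, and osculation forces its velocity to be a nilpotent $N\in\frak n^-_G\subseteq\frak g$ acting on $V$ as a single Jordan block, hence regular in $\frak{sl}(d,\R)$. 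The $\frak g$-centralizer of $N$, being contained in the abelian $\frak{sl}(d,\R)$-centralizer, is abelian, so $N$ is a principal nilpotent of $\frak g$ (Kostant); completing it to an $\frak{sl}_2$-triple by Jacobson--Morozov, the single Jordan block of $N$ on $V$ forces $V$ to restrict to the irreducible $d$-dimensional module---and forces $\frak g$ to be simple, by Clebsch--Gordan. I expect this geometry-to-algebra passage, showing that osculation produces an $\frak{sl}(d,\R)$-regular nilpotent genuinely lying in $\frak g$, to be the main obstacle.

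It then remains to classify the simple complex Lie algebras $\frak g_\C$ admitting an irreducible representation $V_\C$ of dimension $d$ whose restriction to a principal $\frak{sl}_2$ is the irreducible $d$-dimensional module---equivalently, such that the semisimple element $H_0$ of the principal triple acts on $V_\C$ with eigenvalues $d-1,d-3,\dots,-(d-1)$, each of multiplicity one. Testing the highest weight $\lambda$ against $\langle\lambda,H_0\rangle=d-1$ together with the weight multiplicities, type by type, leaves exactly $(\frak{sl}_2,\mathrm{Sym}^{d-1})$, $(\frak{sl}_d,\text{standard})$, $(\frak{sp}_{2n},\text{standard})$ with $d=2n$, $(\frak{so}_{2n+1},\text{standard})$ with $d=2n+1$, and $(\frak g_2,V_7)$ with $d=7$: the vector representation of $\frak{so}_{2n}$ has $\langle\varpi_1,H_0\rangle=d-2$ and restricts as $V_{d-1}\oplus V_1$; the $26$-dimensional representation of $\frak f_4$ and the $248$-dimensional adjoint of $\frak e_8$ are not weight-multiplicity-free; the minuscule representations of $\frak e_6,\frak e_7$ and the spin representations of $\frak b_n,\frak d_n$ for large $n$ are weight-multiplicity-free but restrict reducibly to the principal $\frak{sl}_2$, as the exponents show; and the remaining non-standard fundamental representations of the surviving types give sums of at least two $\frak{sl}_2$-strings. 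Since the weights of $V$ span $\frak a_\C^*$ in each surviving case and $\frak a$ carries an $\R$-regular element, $G$ is the split real form of $\frak g_\C$; reading these off gives $\tau_d(\SL(2,\R))$, $\SL(d,\R)$, $\Sp(2n,\R)$, $\SO(n,n+1)$ and the split form $\Ge$ of $(G_2)_\C$---which acts on $\R^7$ preserving a form of signature $(3,4)$, the $n=3$ instance of $\SO(n,n+1)$ being $\SO(3,4)$. This recovers the asserted list.
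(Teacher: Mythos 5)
First, a point of comparison: the paper itself does not prove this statement. It is imported verbatim from Guichard \cite{olivier} and used as a black box; the paper's only contribution around it is the subsequent root-and-weight computation for each group on the list. So there is no in-paper argument to measure you against, and your sketch has to stand on its own. Its architecture --- show the Zariski closure is connected, semisimple and irreducible; produce inside its Lie algebra $\mathfrak g$ a regular nilpotent of $\mathfrak{sl}(d,\R)$ from Labourie's Frenet curve; complete it to an $\mathfrak{sl}_2$-triple and classify the pairs $(\mathfrak g, \R^d)$ whose restriction to that $\mathfrak{sl}_2$ is irreducible; then pin down the split real form using the presence of elements with $d$ distinct real eigenvalues --- is indeed the strategy attributed to Guichard, and the Lie-theoretic half (simplicity via Clebsch--Gordan, Jacobson--Morozov/Kostant, the type-by-type check, the exclusion of $\mathfrak{so}(n,n)$, spin and exceptional modules) is essentially sound, though the case check is only gestured at and the step ``abelian centralizer implies regular'' deserves a citation.

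The genuine gap is the step you yourself call the main obstacle: it is asserted, not proved, and as phrased it is not quite the right statement. The flag map $\z$ is only H\"older; Labourie \cite{labourie} gives that the image of $\z_1$ is a $\clase^1$ curve with tangent $\z_2$, but differentiability of the \emph{full} flag curve (equivalently of every osculating curve $\z_k$) in the big-cell chart is a nontrivial regularity statement that the osculation property, as a statement about limits of spans, does not hand you; without it there is no ``velocity'' to speak of, since a H\"older curve need not be differentiable anywhere. Moreover, even at a point of differentiability the tangent need not ``act on $V$ as a single Jordan block'': what one can realistically extract, and what suffices, is an element of $\mathfrak g\cap\mathfrak n^-$ (using that the curve lies in the $G$-orbit, which itself requires the standard but non-free fact that the limit curve sits in a closed $G$-orbit --- you invoke splitness of $G$ here before it has been established) whose components along all first-subdiagonal root spaces of $\mathfrak{sl}(d,\R)$ are nonzero; such an element is regular nilpotent regardless of its deeper subdiagonal components. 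So the skeleton is right, but the analytic heart of the theorem --- converting the Frenet/osculation property of a merely H\"older equivariant curve into an actual regular nilpotent of $\mathfrak{sl}(d,\R)$ lying in $\mathfrak g$ --- is missing, and it is precisely the part that makes this theorem hard.
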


For $i\in\{1,\ldots,k\}$ we will denote by $\eps_i:\R^k\to\R$ the function $$\eps_i(\upla a1k)=a_i.$$ We refer the reader to Knapp's book \cite{g2} for the standard computations of simple roots and highest weights that follow.

\subsection*{The $\tau_d(\SL(2,\R))$ and $\SL(d,\R)$ cases}

Assume first that $\rho(\pi_1\E)$ is Fuchsian, i.e. it is Zariski dense in $\tau_d(\SL(2,\R)).$ A Cartan subspace of $\frak{sl}(2,\R)$ is $\frak a=\{(a,-a):a\in\R\}$ the Weyl chamber is $\frak a^+=\{(a,-a):a\geq0\}$ with simple root $\Pi=\{2\eps_1\}.$ The highest weight of the representation $\tau_d$ is $\chi(a,-a)=(d-1)a.$ Hence $$\frac {\t(\sf{bar}_{\frak a^+})} {\chi(\sf{bar}_{\frak a^+})}=\frac{2a}{(d-1)a}=\frac2{d-1}.$$

Suppose now that $\rho(\pi_1\E)$ is Zariski dense in $\SL(d,\R).$ The Cartan subspace of $\frak{sl}(d,\R)$ is  $\frak a=\{(a_1,\ldots,a_d)\in\R^d:a_1+\cdots+a_d=0\}$ and $$\frak a^+=\{(a_1,\ldots,a_d)\in\frak a: a_1\geq\cdots\geq a_d\},$$ the simple roots are $$\Pi=\{\a_i(a_1,\ldots,a_d)=a_i-a_{i+1}:i\in\{1,\ldots,d-1\}\}$$ and the barycenter is $$\sf{bar}_{\frak a^+}=\{((d-1)t,(d-3)t,\ldots,(3-d)t,(1-d)t):t\geq0\}.$$ Hence for any  $\t\in\Pi$ one has $$\frac {\t(\sf{bar}_{\frak a^+})} {\chi(\sf{bar}_{\frak a^+})}=\frac{2t}{(d-1)t}=\frac2{d-1}.$$

\subsection*{The $\Sp(2n,\R)$ case}

Assume $d=2n$ and that the Zariski closure of $\rho(\pi_1\E)$ is $\Sp(2n,\R).$ Standard computations show that $\frak a=\R^n,$ and a Weyl chamber is $$\frak a^+=\{(\upla a1n):a_i\geq a_{i+1}\ i=1,\ldots,n-1\textrm{ and } a_n\geq0\}.$$ The set of simple roots associated to this Weyl chamber is $$\Pi=\{\eps_i-\eps_{i+1}:i=1,\ldots,n-1\}\cup\{2\eps_n\}.$$ The barycenter of the Weyl chamber is hence $$\sf{bar}_{\frak a^+}=\{((2n-1)t,(2n-3)t,\ldots,3t,t):t\geq0\}.$$ The highest weight of the representation $\Sp(2n,\R)\subset\SL(d,\R)$ is $\chi(\upla a1n)=a_1.$ Finally, for any $\t\in\Pi$ one has $$\frac{\t(\sf{bar}_{\frak a^+})}{\chi(\sf{bar}_{\frak a^+})}=\frac{2t}{(2n-1)t}=\frac2{d-1}.$$

\subsection*{The $\SO(n,n+1)$ case}

Suppose now that $d=2n+1$ and that the Zariski closure of $\rho(\pi_1\E)$ is $\SO(n,n+1).$ Standard computations show that $\frak a=\R^n,$ and a Weyl chamber is $$\frak a^+=\{(\upla a1n):a_i\geq a_{i+1}\ i=1,\ldots,n-1\textrm{ and } a_n\geq0\}.$$ The set of simple roots associated to this Weyl chamber is $$\Pi=\{\eps_i-\eps_{i+1}:i=1,\ldots,n-1\}\cup\{\eps_n\}.$$ The barycenter of the Weyl chamber is hence $$\sf{bar}_{\frak a^+}=\{(nt,(n-1)t,\ldots,2t,t):t\geq0\}.$$ The highest weight of the representation $\SO(n,n+1)\subset\SL(d,\R)$ is $\chi(\upla a1n)=a_1.$ Finally, for any $\t\in\Pi$ one has $$\frac{\t(\sf{bar}_{\frak a^+})}{\chi(\sf{bar}_{\frak a^+})}=\frac{t}{nt}=\frac1n=\frac2{d-1}.$$
 
\subsection*{The $\Ge$ case}

The remaining case is $d=7$ and the Zariski closure of $\rho(\pi_1\E)$ being the exceptional simple Lie group $\Ge.$ We refer the reader to Knapp's book \cite[page 692]{g2} for the following computations. In this case we have $$\frak a=\{(a_1,a_2,a_3)\in\R^3:a_1+a_2+a_3=0\},$$ a Weyl chamber is $$\frak a^+=\{(a_1,a_2,a_3):a_1\geq a_2\textrm{ and }-2a_1+a_2+a_3\geq0\}.$$ 
The set of simple roots is $$\Pi=\{\eps_1-\eps_2,-2\eps_1+\eps_2+\eps_3\},$$ and the barycenter of the Weyl chamber is hence $${\sf{bar}}_{\frak a^+}=\{(-t,-4t,5t):t\geq0\}.$$ The highest weight associated to the representation $\Ge\to\SL(7,\R)$ is $$\chi=\om_1=2(\eps_1-\eps_2)-2 \eps_1+\eps_2+ \eps_3=\eps_3-\eps_2.$$ Finally, for any $\t\in\Pi$ one has $$\frac{\t(\sf{bar}_{\frak a^+})}{\chi(\sf{bar}_{\frak a^+})}=\frac{3t}{5t+4t}=\frac13=\frac2{d-1}.$$ 

\noindent
This finishes the proof. 
\end{proof}

\bibliography{/Users/andres/Dropbox/GordoSambita/CasoHitchin/stage1}

\begin{thebibliography}{10}

\bibitem{abramov}
L.M. Abramov.
\newblock On the entropy of a flow.
\newblock {\em Dokl. Akad. Nauk. SSSR}, 128:873--875, 1959.

\bibitem{limite}
Y.~Benoist.
\newblock Propri{\'e}t{\'e}s asymptotiques des groupes lin{\'e}aires.
\newblock {\em Geom. funct. anal.}, 7(1):1--47, 1997.

\bibitem{convexes1}
Y.~Benoist.
\newblock Convexes divisibles $\textrm{I}$.
\newblock In {\em Algebraic groups and arithmetic}, pages 339--374. Tata Inst.
  Fund. Res., 2004.

\bibitem{convexes3}
Y.~Benoist.
\newblock Convexes divisibles $\textrm{III}$.
\newblock {\em Ann. Sci. {\'E}cole Norm. Sup.}, 38:793--832, 2005.

\bibitem{bourdon}
M.~Bourdon.
\newblock Structure conforme au bord et flot g{\'e}od{\'e}sique d'un
  $\textrm{CAT}(-1)$-espace.
\newblock {\em Enseign. Math. (2)}, 41:63--102, 1995.

\bibitem{bourdon2}
M.~Bourdon.
\newblock Sur le birraport au bord des $\textrm{CAT}(-1)$-espaces.
\newblock {\em Publ. Math. I.H.E.S.}, 83:95--104, 1996.

\bibitem{bowenruelle}
R.~Bowen and D.~Ruelle.
\newblock The ergodic theory of axiom $\textrm{A}$ flows.
\newblock {\em Invent. Math.}, 29:181--202, 1975.

\bibitem{presion}
M.~Bridgeman, R.~Canary, F.~Labourie, and A.~Sambarino.
\newblock The pressure metric for convex representations.
\newblock http://arxiv.org/abs/1301.7459, 2013.

\bibitem{courtoislibro}
G.~Courtois.
\newblock Critical exponents and rigidity in negative curvature.
\newblock In {\em S{\'e}minaires \& Congr{\'e}s, G{\'e}om{\'e}tries {\`a}
  courbure n{\'e}gative ou nulle, groupes discrets et rigidit{\'e}}, volume~18,
  pages 293--319. Soci{\'e}t{\'e} math{\'e}matique de France, 2009.

\bibitem{crampon}
M.~Crampon.
\newblock Entropies of compact strictly convex projective manifolds.
\newblock {\em J. of Mod. Dyn.}, 3:511--547, 2009.

\bibitem{clausura}
O.~Guichard.
\newblock In preparation.

\bibitem{olivieranna}
O.~Guichard and A.~Wienhard.
\newblock Anosov representations: domains of discontinuity and applications.
\newblock {\em Invent. Math.}, 190:357--438, 2012.

\bibitem{ursula}
U.~Hamenst{\"a}dt.
\newblock Time preserving conjugacies of geodesic flows.
\newblock {\em Ergod. Th. \& Dynam. Sys.}, 12:67--74, 1992.

\bibitem{hitchin}
N.~J. Hitchin.
\newblock Lie groups and $\textrm{T}$eichm{\"u}ller space.
\newblock {\em Topology}, 31(3):449--473, 1992.

\bibitem{labourie}
F.~Labourie.
\newblock Anosov flows, surface groups and curves in projective space.
\newblock {\em Invent. Math.}, 165:51--114, 2006.

\bibitem{ledrappier}
F.~Ledrappier.
\newblock Structure au bord des vari{\'e}t{\'e}s {\`a} courbure n{\'e}gative.
\newblock {\em S{\'e}minaire de th{\'e}orie spectrale et g{\'e}om{\'e}trie de
  Grenoble}, 71:97--122, 1994-1995.

\bibitem{otal}
J.-P. Otal.
\newblock Le spectre marqu{\'e} des longueurs des surfaces {\`a} courbure
  n{\'e}gative.
\newblock {\em Ann. of Math.}, 131:151--162, 1990.

\bibitem{smaleflows}
M.~Pollicott.
\newblock Symbolic dynamics for $\textrm{S}$male flows.
\newblock {\em Amer. Journ. of Math.}, 109(1):183--200, 1987.

\bibitem{g2}
K.~Sagerching.
\newblock Split octonions and generic rank two distributions in dimension five.
\newblock {\em Archivum Mathematicum}, Supplement:329--339, 2006.

\bibitem{quantitative}
A.~Sambarino.
\newblock Quantitative properties of convex representations.
\newblock {\em Comment. Math. Helv.}, 89(2):443--488, 2014.

\bibitem{sigmund}
K.~Sigmund.
\newblock On the space of invariant measures for hyperbolic flows.
\newblock {\em Amer. J. Math.}, 94:31--37, 1972.

\bibitem{sullivan}
D.~Sullivan.
\newblock The densitiy at infinity of a discrete group of hyperbolic motions.
\newblock {\em Publ. Math. de l'I.H.E.S.}, 50:171--202, 1979.

\bibitem{tits}
J.~Tits.
\newblock Repr{\'e}sentations lin{\'e}aires irr{\'e}ductibles d'un groupe
  r{\'e}ductif sur un corps quelconqe.
\newblock {\em J. Reine Angew. Math.}, 247:196--220, 1971.

\bibitem{yue}
C.~Yue.
\newblock The ergodic theory of discrete isometry groups on manifolds of
  variable negative curvature.
\newblock {\em Trans. of the A.M.S.}, 348(12):4965--5005, 1996.

\end{thebibliography}
\bibliographystyle{plain}

\author{$\ $ \\
Andr\'es Sambarino\\
  Departement de Math\'ematiques\\
Universit\'e Paris Sud,\\
  F-91405 Orsay France,\\
  \texttt{andres.sambarino@gmail.com}}

\end{document}